\numberwithin{equation}{section}
\newtheorem{lemma}[equation]{Lemma}
\newtheorem{proposition}[equation]{Proposition}
\newtheorem{corollary}[equation]{Corollary}
\newtheorem{thm}{Theorem}
\theoremstyle{definition}
\newtheorem{definition}[equation]{Definition}
\theoremstyle{remark}
\newtheorem*{remark*}{Remark}
\theoremstyle{remark}
\theoremstyle{definition}
\newtheorem{problem}{Problem}
\newcommand{\Vol}{\mathrm{Vol}}
\newcommand{\im}{\mathrm{im}}
\newcommand{\SU}{\mathrm{SU}}
\newcommand{\SPAN}{\mathrm{span}}
\renewcommand{\Re}{\operatorname{Re}}
\renewcommand{\Im}{\operatorname{Im}}
\newcommand{\SL}{\mathrm{SL}}
\newcommand{\dlangle}{\langle\!\langle}
\newcommand{\drangle}{\rangle\!\rangle}
\newcommand{\dlpara}{(\!(}
\newcommand{\drpara}{)\!)}
\newcommand{\Alt}{\mathrm{Alt}}
\newcommand {\cL}{{\cal L}}
\newcommand{\bC}{{\mathbb{C}}}
\newcommand{\bP}{{\bf P}}
\newcommand{\db}{\overline{\partial}}
\newcommand{\bR}{{\mathbb{R}}}
\newcommand{\bZ}{{\mathbb{Z}}}
\newcommand{\hPsi}{\hat{\Psi}}
\newcommand{\Lv}{\mathcal{L}_v}
\newcommand{\KN}{\mathbin{\bigcirc\mspace{-15mu}\wedge\mspace{3mu}}}
\title{Closed 3-forms in five dimensions and embedding problems}
\author[1,2]{Simon Donaldson}
\affil[1]{\footnotesize Imperial College London}
\author[2]{Fabian Lehmann}
\affil[2]{\footnotesize Simons Center for Geometry and Physics, Stony Brook University}
\begin{document}

\maketitle

In this paper we begin the study of an embedding question for $3$-forms on $5$-dimensional manifolds. Let $Z$ be a complex Calabi--Yau threefold, that is, a $3$-dimensional complex manifold with a nowhere-vanishing holomorphic $3$-form, which we write as $\Psi+ i\hPsi$ where $\Psi,\hPsi$ are real $3$-forms. From another point of view, $Z$ is a manifold with a torsion-free $\SL(3,\mathbb{C})$-structure.  Let $M$ be a $5$-manifold and $\psi$ a given  closed $3$-form on $M$. The question we consider is the existence of an embedding $F:M\rightarrow Z$ such that $F^{*}\Psi=\psi$. The main case we have in mind is when $Z$ is $\bC^{3}$ with its standard holomorphic $3$-form. 

Our original motivation for considering this question comes from Hitchin's approach to $\SL(3,\bC)$-structures \cite{Hitchin}. A special feature of the algebra of $3$-forms in dimension $6$ is that there is an open set of forms  $\Psi$  which  determine an almost-complex structure and the imaginary part $\hPsi$ algebraically. The structural equations are equivalent to the conditions that $\Psi$ and $\hPsi$ are closed $3$-forms: they can be viewed as a system of  partial differential equations for $\Psi$. On a closed $6$-manifold,  Hitchin gave a variational formulation in terms of the volume functional on forms $\Psi$ in  a given de Rham cohomology class. In this setting there is a natural boundary value problem on a $6$-manifold $Z_{0}$  with boundary $\partial Z_{0}=M$ and with a given $3$-form $\psi$ on $M$.  One seeks a solution $\Psi$ to the partial differential equations which restrict to $\psi$ on the boundary. This is the analogue, in dimension $6$, of the $7$-dimensional theory for $G_{2}$-structures studied in \cite{Donaldson-Bdry-G2}. We plan to develop this boundary value theory further in another article but, in the perturbative theory which we focus on in the body of the current paper, if $Z_{0}$ is, for example,  a pseudoconvex domain with smooth boundary in $\bC^{3}$,  then by results of Hamilton \cite{Hamilton1977Deformation2} any deformation of $Z_{0}$ as an abstract complex manifold with boundary is realised as a deformation of the domain within  $\bC^{3}$. Then the  boundary value problem in this setting is essentially equivalent to the embedding problem for $M$ in $Z$.

An informal count tells us that a closed $3$-form in five dimensions depends locally on $6$ unconstrained functions. (That is, we can write such a form as $d\tau$ for a $2$-form $\tau$, which gives $10$ functions: we can change $\tau$ to $\tau+d\eta$, so we subtract $5$, but if $\eta=df$ the change is ineffective so we add $1$,  and our count is $10-5+1=6$.)
Since a map from $M$ to a $6$-manifold also depends on $6$ functions the  count suggests that the embedding question is a reasonable one. This  is special to the dimension $5$: for $m>3$, on a manifold of dimension $2m-1$ the closed $m$-forms form a much \lq\lq larger''  space than the maps to a $2m$-manifold.
The situation is somewhat like that in Riemannian geometry where, at the level of function counting, in dimension $n=2$ it is reasonable to  seek an isometric embedding of an abstract Riemannian $n$-manifold as a hypersurface in $\bR^{n+1}$ , but not for $n>2$.   In that setting, the  solution (by Nirenberg and Pogorelov) of the famous Weyl problem gives existence and uniqueness for the case when the metric on the surface has positive Gauss curvature: the image is then the boundary of a convex domain in $\bR^{3}$. In a similar
 vein,  in this article we focus on data satisfying a  pseudoconvexity condition. We will also see (in subsection \ref{section-Minkowski}) that the Minkowski problem---another famous classical embedding problem---can be obtained as a dimensional-reduction of our theory.

We begin Section \ref{section1} with  an elementary  study of the structure of closed $3$-forms on $5$-manifolds $M$ and define an open set of {\it strongly pseudoconvex} forms. If a solution to the embedding problem exists these correspond to strongly pseudoconvex hypersurfaces in the ordinary sense of several complex variable theory. Such a  $3$-form defines a contact structure $H\subset TM$ and an orthonormal pair of $2$-forms $\omega,\alpha$ on $H$. A  solution of the embedding problem gives a third $2$-form $\beta$ on $H$ such that $\omega,\alpha,\beta$ make up an orthonormal triple, satisfying certain equations involving the exterior derivative $d$ and its restriction $d_{H}$ to $H$. This orthonormal triple defines an $\SU(2)$-structure, of a kind which we call \textit{contact hyperk\"ahler}. A special class of such structures is formed by the {\it Sasaki-Einstein} structures, which we define in \ref{section-Sasaki-Einstein}. Contact hyperk\"ahler $\SU(2)$-structures are a special type of so called \textit{nearly hypo} $\SU(2)$-structures which are induced on real hypersurfaces in $6$-dimensional manifolds with a torsion-free $\SU(3)$-structure \cite{Conti2005GeneralizedKS}. These in general come without a contact structure.

As Robert Bryant pointed out to us, the core of the embedding question can be formulated as a problem on the $5$-manifold $M$. Starting with a strongly pseudoconvex $3$-form $\psi$, and hence a pair $(\omega,\alpha)$, the problem is to extend this to a contact hyperk\"ahler structure $(\omega,\alpha,\beta)$. This is a nonlinear PDE for $\beta$ on the $5$-manifold which can be viewed as a \lq\lq contact version'' of the Calabi--Yau problem in four real dimensions ({\it i.e.} the existence of a hyperk\"ahler structure). If we have a solution $\beta$ the pair $(\alpha,\beta)$ defines a CR structure. From then on our embedding question becomes essentially the much-studied CR-embedding problem.

In the first part of Section \ref{section-Nash-Moser} we analyse the perturbative version of our problem, for deformations around a given solution,   and the associated linearised question. 
 The $\SU(2)$-structure
defines a Euclidean metric on $H$ and a decomposition of the $2$-forms on $H$ into self-dual and anti-self-dual parts: $\Omega^{2}_{H}=\Omega^{+}_{H}\oplus \Omega^{-}_{H}$, where the self-dual subspace is spanned by $\omega,\alpha,\beta$. We find that the key operator in the linearised theory is
$$   d^{-}_{H}:\Omega^{1}_{H}\rightarrow \Omega^{-}_{H}. $$
The relevant foundations from linear analysis are developed before, in Section \ref{section_linear_analysis}. In particular we show that the vector space
$$ {\cal H}= \{ \sigma\in \Omega^{-}_{H}: d_{H}\sigma =0\}, $$
can be identified with the cokernel of $d^{-}_{H}$. This space ${\cal H}$ thus appears as the obstruction to solving the deformation problem. We also show that the vanishing of ${\cal H}$ is an open condition.  The main feature of the linear analysis is that, as in the CR theory, the relevant operators are subelliptic, not elliptic, and the inverses suffer a \lq\lq loss of derivatives'' in Sobolev spaces. In the second part of Section \ref{section-Nash-Moser} we apply the Nash--Moser inverse function theorem to obtain our deformation result, in the case when ${\cal H}=0$. This requires a careful study of the dependence of the estimates for inverse operators on parameters.

Given $\psi$, an embedding $F:M\hookrightarrow Z$ with $F^*\Psi=\psi$ is in general not unique. If $F(M)$ is the boundary of a domain $U\subset Z$, and if $\Phi: \bar{U}\rightarrow Z$ is a diffeomorphism to its image which is holomorphic and satisfies $\Phi^*\Psi=\Psi$, then $\Phi\circ F$ is another embedding which realises $\psi$. If for example the ambient space is $\mathbb{C}^3$, then the restriction of every element in $\SL(3,\mathbb{C})$ is such a diffeomorphism.

We collect our results in the following Theorem. 

\begin{thm}
\label{mainTHM}
Let $(\theta,\omega,\alpha,\beta)$ be a contact hyperk\"ahler $\SU(2)$-structure on $M$. 
\begin{itemize}
\item
The space $\mathcal{H}$ is finite dimensional.
\item
Suppose the $\SU(2)$-structure is induced by an embedding $F:M\hookrightarrow Z$ and that $\mathcal{H}=0$. Then for every closed $3$-form $\tilde{\psi}$
in the de Rham cohomology class of
$\psi=F^*\Psi$ which is sufficiently close to $\psi$
there is an embedding $\widetilde{F}$ close to $F$ such that $\widetilde{F}^*\Psi=\tilde{\psi}$.
If the ambient space is $Z=\mathbb{C}^3$, then $\widetilde{F}$ in a neighbourhood of $F$ is unique up to holomorphic diffeomorphisms as above. 
\item
If the $\SU(2)$-structure is Sasaki--Einstein and $Z$ is Stein, then $\mathcal{H}=0$. In particular this is true for the standard embedding $S^5\hookrightarrow \mathbb{C}^3$.
\end{itemize}
\end{thm}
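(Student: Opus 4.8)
The plan is to prove the three bullet points in turn, with the bulk of the work going into identifying $\mathcal H$ with a cohomology group whose vanishing can be read off from classical results.

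\medskip

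\emph{Finite-dimensionality of $\mathcal H$.} Recall from the linear analysis section that $\mathcal H = \ker(d_H \colon \Omega^-_H \to \Omega^\bullet)$ has been identified with the cokernel of the subelliptic operator $d^-_H \colon \Omega^1_H \to \Omega^-_H$. The first bullet then follows from the general subelliptic theory already developed: operators of this type (built from the contact exterior derivative) satisfy subelliptic estimates with a fixed loss of derivatives, hence are Fredholm between the appropriate Sobolev spaces on the closed manifold $M$, so their cokernels are finite-dimensional. Concretely, I would point to the a priori estimate $\|\sigma\|_{s+\varepsilon} \lesssim \|d_H^* d_H \sigma\|_s + \|\sigma\|_0$ for $\sigma \in \Omega^-_H$ (with $d_H^*$ the formal adjoint with respect to the $\SU(2)$-metric and some chosen volume form/transverse measure), Rellich compactness, and the standard argument that the kernel of a semi-Fredholm operator with such an estimate is finite-dimensional. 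Nothing new is needed here beyond what Section~\ref{section_linear_analysis} provides.

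\medskip

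\emph{The deformation statement.} This is the heart of the theorem but, as flagged, it is a repackaging of the Nash--Moser result of Section~\ref{section-Nash-Moser}. Given the embedding $F$ with $F^*\Psi = \psi$ and $\mathcal H = 0$, the hypothesis that $\tilde\psi$ is closed, cohomologous to $\psi$, and $C^\infty$-close to $\psi$ means $\tilde\psi$ is again strongly pseudoconvex (an open condition) and defines a pair $(\tilde\omega,\tilde\alpha)$ near $(\omega,\alpha)$. By Bryant's reformulation the problem is to extend $(\tilde\omega,\tilde\alpha)$ to a contact hyperk\"ahler $\SU(2)$-structure, i.e.\ to solve the nonlinear PDE for $\tilde\beta$; the linearisation is governed by $d^-_H$, whose cokernel $\mathcal H$ vanishes by assumption and stays zero for nearby structures (openness of $\mathcal H = 0$, also from Section~\ref{section_linear_analysis}). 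Feeding the tame estimates for the right inverse, together with their controlled dependence on the background structure, into the Nash--Moser inverse function theorem yields a solution $\tilde\beta$ close to $\beta$, hence a contact hyperk\"ahler structure close to the original; the associated CR structure $(\tilde\alpha,\tilde\beta)$ is then embeddable and close to that of $F(M)$ by the stability in the CR-embedding theory, producing $\widetilde F$ with $\widetilde F^* \Psi = \tilde\psi$. For the uniqueness clause when $Z = \mathbb C^3$: two nearby embeddings realising the same $\tilde\psi$ induce the same pullback form, hence the same $\SU(2)$-structure, hence agree as CR embeddings up to the stated ambiguity; uniqueness for the CR embedding problem (Chern--Moser rigidity, strong pseudoconvexity) forces the two to differ by a biholomorphism of neighbourhoods, and since the form $\Psi$ is also preserved this biholomorphism is the restriction of an element of $\SL(3,\mathbb C)$ as in the paragraph preceding the theorem.

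\medskip

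\emph{Vanishing of $\mathcal H$ in the Sasaki--Einstein case.} For the third bullet I would argue that when the $\SU(2)$-structure is Sasaki--Einstein, the cone $C(M) = \mathbb R_{>0} \times M$ carries a Ricci-flat K\"ahler (Calabi--Yau) metric, and the transverse geometry is K\"ahler--Einstein. Under this identification the space $\mathcal H$ of $d_H$-closed anti-self-dual $2$-forms on $H$ should be related to a Dolbeault-type cohomology group on $M$ (or on the cone / the transverse orbifold): anti-self-dual $(1,1)$-forms together with the $(0,2)$ and $(2,0)$ pieces correspond to harmonic representatives of $H^{0,2}_{\bar\partial}$ for the CR structure, and $d_H$-closedness is the $\bar\partial_b$-closed condition. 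When $Z$ is Stein and $M = \partial U$ bounds a strictly pseudoconvex domain, the relevant $\bar\partial$/$\bar\partial_b$-cohomology vanishes in the pertinent degree by Andreotti--Grauert / H\"ormander-type vanishing theorems for $q$-convex domains (equivalently, the $L^2$ $\bar\partial$-estimates of Kohn on strictly pseudoconvex boundaries), which forces $\mathcal H = 0$. For the explicit case $S^5 \subset \mathbb C^3$ one can instead compute directly: the relevant cohomology of $S^5$ (or of the transverse $\mathbb{CP}^2$) vanishes in the needed degree by the Kodaira/Bochner vanishing for the Fubini--Study metric, so $\mathcal H = 0$.

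\medskip

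The main obstacle I anticipate is the \emph{clean identification of $\mathcal H$ with a vanishing cohomology group} in the Sasaki--Einstein setting: one must correctly match the algebraic decomposition $\Omega^-_H$ (self-dual versus anti-self-dual $2$-forms on the contact distribution) and the operator $d_H$ with the $\bar\partial_b$-complex of the induced CR structure, keeping careful track of the transverse direction $\theta$ and of the difference between $d_H$ and $\bar\partial_b$ acting on forms of mixed bidegree, before a vanishing theorem can be invoked. A secondary technical point is ensuring that the parameter-dependence of the Nash--Moser estimates (needed to make "sufficiently close" effective and uniform) is genuinely uniform over the relevant family of background structures; but this has already been carried out in Section~\ref{section-Nash-Moser}, so for the purposes of this theorem it can be quoted.
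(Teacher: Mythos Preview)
Your first bullet is fine and matches the paper's argument.

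For the second bullet you take a different route from the paper. The paper applies Nash--Moser \emph{directly} to the map $P\colon \widetilde F \mapsto \widetilde F^*\Psi$ from embeddings to closed $3$-forms, building an explicit tame right inverse for $DP(\widetilde F)$ once $\mathcal H=0$. Your proposal instead splits into Problems~1 and~2: first solve the contact Calabi--Yau equation for $\tilde\beta$ on $M$, then invoke stability of CR embeddings. That can in principle be made to work, but it requires a precise stability theorem for strictly pseudoconvex CR embeddings into $Z$, which you only gesture at; the paper avoids this by taking the embedding itself as the unknown. Your uniqueness argument also has a gap: ``Chern--Moser rigidity'' is not the relevant tool. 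The paper first shows (Proposition~\ref{uniqueness-Calabi}) that $\beta$ is locally unique, so two nearby embeddings realising the same $\psi$ induce the same CR structure; then the component functions of $\widehat F\circ F^{-1}$ are CR and extend holomorphically across the bounded domain $U$ by strong pseudoconvexity, and a maximum-principle argument on the holomorphic factor in $\Phi^*(\Psi+i\hat\Psi)$ forces $\Phi$ to preserve $\Psi$. Note also that the uniqueness is up to holomorphic diffeomorphisms of $\bar U$ preserving $\Psi$, of which $\SL(3,\mathbb C)$ is only the obvious subgroup.

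For the third bullet there is a genuine gap. Your instinct that $\bar\partial_b$-cohomology is involved is correct, but the bidegree bookkeeping is off ($\Lambda^-_H\otimes\mathbb C$ lies in the primitive $(1,1)$-part, not in $(0,2)\oplus(2,0)$), and you have not located where the Sasaki--Einstein hypothesis actually bites. The paper's mechanism is: for any $\sigma\in\mathcal H$ one has $\bar\partial_b^*\sigma=0$ (this needs only $d_H\sigma=0$ and anti-self-duality), and one computes
\[
|\bar\partial_b\sigma|^2=\tfrac14\bigl(\langle\sigma,\mathcal L_v\alpha\rangle^2+\langle\sigma,\mathcal L_v\beta\rangle^2\bigr).
\]
The Sasaki--Einstein equations $\mathcal L_v\alpha=\beta$, $\mathcal L_v\beta=-\alpha$ make both inner products vanish because $\alpha,\beta\in\Omega^+_H$ while $\sigma\in\Omega^-_H$; hence $\sigma$ is $\bar\partial_b$-harmonic. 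The Stein hypothesis on $Z$ then gives $H^{1,1}(M)=0$ via the long exact sequence linking $\bar\partial_b$-cohomology of $M$ to Dolbeault cohomology of the domain, so $\sigma=0$. Your sketch invokes transverse K\"ahler--Einstein geometry and Bochner vanishing but never isolates the step that converts $d_H\sigma=0$ into $\bar\partial_b\sigma=0$; without it the vanishing theorem cannot be applied, and the Sasaki--Einstein assumption is left doing no visible work.
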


We prove the first item in the more general context of contact-metric $5$-manifolds.
For the the proof of the last item in Theorem \ref{mainTHM} we need some of the theory of the $\db_{b}$ complex,  which is reviewed in Appendix \ref{appendix-vanishing}.

We do not know any examples of contact hyperk\"ahler manifolds which bound a strongly pseudoconvex region in a Stein manifold for which ${\cal H}$ is non-zero. It is possible that it is always zero, which would greatly extend the scope of our result. In Appendix \ref{appendix-B} we give an explicit neighbourhood of the standard structure on $S^{5}$ where ${\cal H}$ vanishes and in Appendix \ref{appendix-C} we obtain a curvature criterion for vanishing, via a Weitzenbock formula. This involves  differential-geometric constructions which  have independent interest. 

The uniqueness statement in Theorem \ref{mainTHM} only covers the case when $Z=\bC^{3}$. We are confident that there is a similar statement in general but that seems to be more easily treated in the framework of the boundary value theory alluded to at the beginning of this introduction,  so we do not go into it here.

The authors are very grateful to Robert Bryant for many helpful discussions and suggestions related to this work.

This research was supported by the Simons Foundation through the Simons Collaboration on {\it Special Holonomy in Geometry, Analysis and Physics}.

\section{Closed $3$-forms in dimension five}
\label{section1}

\subsection{The structure of closed 3-forms on a five dimensional manifold}

\label{section-structure-in-dim5}

Let $M$ be an oriented $5$-manifold and $\psi$ a closed $3$-form on $M$.
In this section we describe the structure induced on $M$ by $\psi$ under a further convexity condition. The chosen terminology will become clear in the next section.

\begin{definition}
\label{Def-pseudoconvex}
$\psi$ is called \textit{strongly pseudoconvex} if
it satisfies the following three properties:
\begin{compactenum}[(1)]
\item
The skew-symmetric bilinear form on cotangent vectors with values in the real line of volume forms given by
\begin{align}
(\lambda,\eta) \mapsto \lambda\wedge\eta\wedge\psi
\label{bilinear}
\end{align}
has maximal rank, i.e. $4$, at each point.
This defines a rank 4 subbundle $H\subset TM$.

To describe the next two conditions, 
let $\theta$ be a $1$-form which at each point spans the $1$-dimensional space of cotangent vectors for which \eqref{bilinear} is degenerate. Then $\psi$ can be written as $\psi=\theta\wedge\alpha$ for some $2$-form $\alpha$ and $H$ is the kernel of $\theta$. 
\item
$\theta\wedge\alpha^2$ does not vanish anywhere on $M$.
\item
$\theta\wedge(d\theta)^2$ is a positive multiple of $\theta\wedge\alpha^2$.
In particular, $d\theta$ is non-degenerate on $H$, which means that $H$ is a contact structure.
\end{compactenum}
\end{definition}
The $1$-dimensional space for which \eqref{bilinear} is degenerate, which we will denote by $\ker(\psi)$, is characterised by $\ker(\psi) = \{\theta : \theta\wedge\psi=0\}$. If $\Phi$ is a diffeomorphism of $M$, the calculation $\Phi^*\theta\wedge\Phi^*\psi = \Phi^*(\theta\wedge\psi)=0$ shows $\ker(\Phi^*\psi)=\Phi^*\ker(\psi)$. This means that condition (1) is preserved by the action of the diffeomorphism group, and it is clear that conditions (2) and (3) are preserved as well. Thus the diffeomorphism group acts on the set of strongly pseudoconvex $3$-forms. Furthermore, if $\psi$ determines the contact distribution $H$, then $\Phi^*\psi$ determines the contact distribution $(\Phi^{-1})_*H$.

The decomposition $\psi=\theta\wedge\alpha$ is not unique as we can change $\theta$ to $f\theta$ and $\alpha$ to $f^{-1}\alpha$, where $f$ is a nowhere vanishing function on $M$, and add $\theta\wedge\chi$ to $\alpha$, where $\chi$ is any $1$-form.
Conditions (2) and (3) make sense independent of these choices.
In the following we choose preferred forms $\theta$ and $\alpha$.
First of all, fix the sign of $\theta$ such that $\theta\wedge\alpha^2$ is a positive form with respect to the orientation of $M$. Then scale as above by a positive function $f$ such that $\theta\wedge\alpha^2=\theta\wedge(d\theta)^2$. This fixes the contact form $\theta$ and hence a Reeb vector field $v$ such that $v$ spans $\ker d\theta$ and $\theta(v)=1$. 
Write $\omega:=d\theta$.  We have a splitting $TM= \mathbb{R}v\oplus H$,
which furthermore induces a splitting
\begin{align}
\Lambda^p TM^* = \theta\wedge \Lambda^{p-1} H^* \oplus \Lambda^p H^*
\label{splitting-forms}
\end{align}
of the bundle of $p$-forms on $M$
with the associated splitting
\begin{align}
\Omega^p = \theta\wedge\Omega^{p-1}_H \oplus \Omega^p_H,
\label{splitting-sections}
\end{align}
of $p$-forms, where we write $\Omega^p_H$ for sections of $\Lambda^p H^*$.
By the definition of $v$, we have $\omega\in\Omega^2_H$. 
We now fix $\alpha$ such that $\alpha\in\Omega^2_H$.
This does not change the previous normalisation, i.e. we have $\omega^2=\alpha^2$. Write $d\Vol_H:=\omega^2\in\Omega^4_H$ for this volume form on $H$.

Similar to the geometry of $4$-manifolds, an important role will be played by the symmetric bilinear form on $\Omega^2_H$ given by the wedge product.
We write 
\begin{align}
\sigma.\tau = \frac{\sigma\wedge\tau}{d\Vol_H},
\quad
\sigma,\tau\in\Omega^2_H.
\end{align}
Next we describe the exterior derivative $d$ under the splitting \eqref{splitting-sections}. For $X\in\Gamma(H)$ we have
$\theta([v,X])=-\omega(v,X)=0$, so that the Lie derivative $\mathcal{L}_v$ along the Reeb vector field $v$ preserves the splitting of $TM$ and \eqref{splitting-sections}. The exterior derivative on $\Omega^p_H$ is the sum of 
\begin{align}
d_H: \Omega^p_H\rightarrow \Omega^{p+1}_H,
\label{d_H}
\end{align}
and 
\begin{align}
\label{d_f}
\theta\wedge\mathcal{L}_v : \Omega^p_H\rightarrow\theta\wedge\Omega^p_H.
\end{align}
$d^2=0$ means that 
\begin{gather}
d_H^2 = -\omega\wedge\mathcal{L}_v,
\quad
\mathcal{L}_v d_H + d_H\mathcal{L}_v = 0.
\label{d2=0}
\end{gather}

Now we will use $d\psi=0$ to derive relations for $\omega$ and $\alpha$.
$d\psi=0$ is
equivalent to
\begin{align}
\omega\wedge\alpha = \theta\wedge d\alpha,
\label{rel-omega-alpha}
\end{align}
and since the left hand side is a section of $\Lambda^4 H^*$, we get $\omega\wedge\alpha=0$ and $\theta\wedge d\alpha=0$.
To sum up, $\psi$ gives orthonormal sections $\alpha,\omega$ of $\Lambda^2 H^*$ in the sense that 
\begin{align}
\omega.\omega =1,
\quad
\alpha.\alpha =1,
\quad
\omega.\alpha=0.
\label{orthonormal-eq}
\end{align}
Furthermore, we have 
\begin{align*}
d\omega=0,
\quad
d_H\alpha = 0.
\end{align*}
The orthonormal pair $(\omega,\alpha)$ defines a complex structure $K$ on $H$ with complex volume form $\omega+i\alpha$.
At each point the group of linear transformations of $TM$ preserving the structure $(\theta, \omega,\alpha)$ is isomorphic to $\SL(2,\mathbb{C})$.

\subsection{Closed 3-forms in dimension five realised by an embedding into a Calabi--Yau 3-fold}

\label{section-submanifold}

Let $Z$ be a complex manifold of complex dimension 3 with a nowhere vanishing holomorphic form $\Psi+i\hat{\Psi}$ of type $(3,0)$. The local model is $Z=\mathbb{C}^3$ with $dz^1\wedge dz^2\wedge dz^3$. Let $ M \subset Z$ be a submanifold of real dimension $5$. The pull-back of $\Psi$ to $M$ induces a closed $3$-form $\psi$ on $M$. To understand the algebraic properties of $\psi$, take $\mathbb{C}^3$ with co-ordinates $z_j=x_j+i y_j$. $\mathbb{C}^3$ has the frame
\begin{align*}
e_1=\partial_{x_1},
\quad
e_2 = \partial_{y_1},
\quad
e_3=\partial_{x_2},
\quad
e_4 = \partial_{y_2},
\quad
e_5=\partial_{x_3},
\quad
e_6 = \partial_{y_3},
\end{align*}
with dual frame $\{e^1, \dots, e^6\}$.
In this frame
\begin{align*}
\Psi
=
e^{135}-e^{146}-e^{236}-e^{245},
\quad
\hat{\Psi}
=
e^{136}+e^{145}+e^{235}-e^{246}.
\end{align*}
Given a point $p\in M$, we can always find a bi-holomorphism of $\mathbb{C}^3$ which preserves the holomorphic volume form such that 
$p=0$ and $M$ in a neighbourhood of $p$ is given as the graph
$y_3 = f(x_1,y_1,x_2,y_2,x_3)$ of a function $f$ with $f(0)=0$ and $df(0)=0$.
Then $T_p M=\SPAN\{e_1,\dots, e_5\}$
and
\begin{align*}
\psi|_p
=
e^{135}-e^{245}.
\end{align*}
At the point $p$ the skew-symmetric form \eqref{bilinear} is given by
\begin{align*}
(e_{13}-e_{24})\otimes e^{12345}.
\end{align*}
We see that this form has rank $4$, being degenerate on  the span of $\theta|_p = e^5$, and defines $H_p=\SPAN\{e_1,\dots,e_4\}$.
Thus we see that condition (1) in Definition \ref{Def-pseudoconvex}
is necessary for a closed $3$-form to be realisable as the restriction of $\Psi$ by an embedding of $M$ into $Z$.
We decompose $\psi=\theta\wedge\alpha$ and $\hat{\psi}=\theta\wedge\beta$,
where
\begin{align*}
\alpha|_p=e^{13}-e^{24},
\quad
\beta|_p = e^{14}+e^{23}.
\end{align*}
We have $\theta\wedge\alpha^2|_p=2 e^{12345}$. Thus condition (2) in Definition \ref{Def-pseudoconvex} is necessary, too, for $\psi$ to be realised by the embedding into $(Z,\Psi+i \hat{\Psi})$.

We have the relations $\alpha\wedge\beta=0$ and $\alpha^2=\beta^2$. Thus $\alpha|_H+i\beta|_H=dz_1\wedge dz_2$ induces an almost complex structure $I$ on $H$. $(H,I)$ is the real expression of the CR-structure induced by the complex structure of the ambient manifold $Z$.
The fundamental invariant of this CR-structure is its Levi form
\begin{align*}
L(X,Y) = d\theta(X,IY)-i d\theta(X,Y), \quad X,Y\in\Gamma(H).
\end{align*}
This form is definite if and only if $\theta\wedge(d\theta)^2$ is a positive multiple of $\theta\wedge\alpha^2$. Thus condition (3) in Definition \ref{Def-pseudoconvex} means precisely that given conditions (1) and (2), any embedding $\iota:M \hookrightarrow Z$ such that $\iota^*\Psi =\psi$ is a strongly pseudoconvex embedding. This motivates our chosen terminology. Strong pseudoconvexity is a helpful condition for embedding problems in CR-geometry.

We now proceed to describe the full structure induced on $M$ in the case the embedding is strongly pseudoconvex. 
Let $\omega, \alpha, \beta \in \Omega^2_H$ be the $2$-forms on $H$ obtained after the normalisation described in the previous chapter. 
Because $\hat{\psi}=\theta\wedge\beta$ is closed as well, 
analogous to \eqref{rel-omega-alpha} we get the relations
\begin{align*}
\omega\wedge\beta =0,
\quad
\theta\wedge\,d\beta = 0.
\end{align*}
The second condition is equivalent to $d_H\beta=0$. 
The triple $(\omega,\alpha,\beta)$ is orthonormal with respect to the wedge product pairing and thus defines an $\SU(2)$-structure on $M$. 
It can be thought of as a contact version of a hyperk\"ahler structure in real dimension $4$. This motivates us to make the following definition:
\begin{definition}
Let $\theta$ be a contact $1$-form on the $5$-manifold $M$ with contact distribution $H$. Suppose $\omega:=d\theta$ and a pair of $2$-forms $\alpha,\beta\in\Omega_H^2$ satisfy
\begin{align}
\omega.\omega = \alpha.\alpha = \beta.\beta =1,
\quad
\omega\wedge \alpha = \alpha\wedge\beta = \beta\wedge\omega = 0.
\end{align}
The $\SU(2)$-structure defined by $(\theta,\omega,\alpha,\beta)$ is called \textit{contact hyperk\"ahler} if it satisfies
\begin{align*}
d_H\alpha=0, \quad d_H\beta = 0.
\end{align*}
\end{definition}

$(\omega,\alpha,\beta)$ spans a positive definite subspace $\Lambda^+_H$. Together with the volume form $\Vol_H=\frac{1}{2}\omega^2$ this induces a metric $g_H$ on $H$, which is given by $g_H(X,Y)=\omega(X,IY)$.
We have a splitting 
\begin{align}
\Lambda^2 H^* = \Lambda^+_H \oplus \Lambda^-_H
\label{splitting-2-forms}
\end{align}
and write $\Omega^+_H$ and $\Omega^-_H$ for sections of $\Lambda^+_H$ and $\Lambda^-_H$, respectively.
The forms $\alpha+i \beta, \beta+ i \omega, \omega + i \alpha$ induce the almost complex structures $I, J, K$ on $H$, which satisfy the quaternionic relations and are compatible with $g_H$. Furthermore we have
\begin{align*}
\omega(X,Y) = g_H(IX,Y),
\quad
\alpha(X,Y) = g_H(JX,Y),
\quad
\beta(X,Y) = g_H(KX,Y).
\end{align*}

We can think of the realisation problem for a strongly pseudoconvex $3$-form $\psi$ as consisting of two parts. 
\begin{problem}
\label{CalabiProblem}
Find $\beta\in\Omega_H^2$ such that $(\theta,\omega,\alpha,\beta)$
forms a contact hyperk\"ahler $\SU(2)$-structure.
\end{problem}
\begin{problem}
Find an embedding for the strongly pseudoconvex CR-manifold  $(M,H,\alpha+ i \beta)$. 
\end{problem}

\subsection{Sasaki--Einstein structures and invariants of closed 3-forms in dimension five}

\label{section-Sasaki-Einstein}

Let $H$ be an oriented contact structure on a $5$-manifold $M$ with contact $1$-form $\theta$ and Reeb field $v$. Furthermore let $\omega:=d\theta, \alpha,\beta$ be an orthonormal triple on $\Lambda_H^2$. The $\SU(2)$-structure $(\theta,\omega,\alpha,\beta)$ is called \textit{Sasaki--Einstein} if $d\alpha = \theta\wedge\beta$ and $
d\beta = -\theta\wedge\alpha$, or equivalently
\begin{align}
d_H\alpha=0, \quad d_H\beta =0, \quad
\mathcal{L}_v\alpha = \beta,
\quad
\mathcal{L}_v\beta = - \alpha.
\label{SE-equations}
\end{align}
This implies that on $(0,\infty)\times M$ the conical differential forms
\begin{align*}
\Lambda
&=
(3 r^2 dr+i r^3 \theta)\wedge(\alpha-i\beta),
\\
\Omega
&=
2 r dr\wedge\theta + r^2 \omega,
\end{align*}
satisfy
\begin{align*}
\Lambda\wedge\Omega = 0, \quad \Lambda\wedge\bar{\Lambda}=2i\Omega^3,
\quad
d\Lambda=0, \quad d\Omega=0.
\end{align*}
This means that $(\Lambda,\Omega)$ defines a torsion-free $\SU(3)$-structure on the cone $(0,\infty)\times M$. In particular, the induced Riemannian cone metric is Ricci-flat and the induced metric on $M$ is Einstein.

Now we return to the structure of a strongly pseudoconvex closed $3$-form $\psi$ on the $5$-manifold $M$, which induces the $\SL(2,\mathbb{C})$-structure $(\theta,\omega,\alpha)$ on $M$ as in Section \ref{section-structure-in-dim5}. Write $\rho:=\mathcal{L}_v \alpha$. Next we show that the condition that $(\theta,\omega,\alpha,\rho)$ is a Sasaki--Einstein $\SU(2)$-structure can be expressed in terms of invariants of the $\SL(2,\mathbb{C})$-structure.

Applying the Lie derivative $\mathcal{L}_v$ to the orthonormality equations \eqref{orthonormal-eq}   gives 
\begin{align*}
\rho.\alpha = 0, 
\quad
\rho.\omega = 0.
\end{align*}
So $\rho$ is a section of the rank $4$ bundle $\Lambda^{1,1}_K \subset \Lambda^2 H^*$. The tensor $\rho$ up to an action of $\SL(2,\mathbb{C})$ is an invariant of the $2$-jet of the structure $\psi$ at a given point.
It has the scalar invariant $Q=\rho.\rho$.

If $Q=1$, then $(\omega, \alpha,\rho)$ is an orthonormal triple
and thus
$(\theta,\omega,\alpha,\rho)$ is an $\SU(2)$-structure
such that $(\omega,\alpha,\rho)$ spans the bundle $\Lambda^+_H \subset\Lambda^2 H^*$ of self-dual $2$-forms. 

We can also consider a third order invariant $\mathcal{L}_v \rho$. When $Q=1$, the orthogonality conditions  imply that 
\begin{align*}
\mathcal{L}_v\rho.\omega,
\quad
\mathcal{L}_v\rho.\rho = 0,
\quad
\mathcal{L}_v\rho.\alpha = -1.
\end{align*}
Thus $\mathcal{L}_v \rho = -\alpha\mod \Omega^-_H$.
Therefore, the conditions $Q=1$ and $\mathcal{L}_v\rho.\mathcal{L}_v\rho=1$ imply that $\alpha$ and $\rho$ solve the equations \eqref{SE-equations} and thus characterise Sasaki--Einstein structures in the setting of a strongly pseudoconvex $3$-form on a $5$-manifold.

\subsection{Example: the Minkowski problem}

\label{section-Minkowski}

Let $Z= \bC^{3}/i \bZ^{3}$ with holomorphic $3$-form induced by $i dz_{1}dz_{2} dz_{3}$ on $\bC^{3}$. Thus in standard co-ordinates $z_{a}= x_{a}+ i y_{a}$ the real $3$-form is
$$ \Psi= dy_{1} dy_{2}dy_{3} - \sum dy_{a} dx_{b} dx_{c}, $$
where $(abc)$ run over cyclic permutations. In the quotient we divide by integral translations in the $y$ coordinates. 
Let $M= \Sigma\times \bR^{3}/\bZ^{3}$ where $\Sigma$ is a compact, connected, oriented surface and consider a $3$-form of the shape
$$\psi= dt_{1}dt_{2} dt_{3} - \sum \lambda_{a} dt_{a}$$
where $\lambda_{1}, \lambda_{2}, \lambda_{3}$ are $2$-forms on $\Sigma$. We consider maps $F:M\rightarrow Z$ of the form $F(p,t)= f(p)+i t $, where
$f:\Sigma\rightarrow \bR^{3}$. Our problem is  to find $f$ such that 
\begin{equation}  f^{*}(dx_{b} dx_{c}) = \lambda_{a}, \end{equation}
for $(abc)$ cyclic.
Certainly a necessary condition is that 
\begin{equation} \int_{\Sigma} \lambda_{a}= 0  \end{equation}

The problem is invariant under special-affine transformations of $\bR^{3}$ but it is convenient, for exposition,  to use the standard Euclidean metric and  unit sphere $S^{2}\subset \bR^{3}$.
Clearly the condition that $\psi$ has maximal rank is equivalent to the condition that the $2$-forms $\lambda_{a}$ do not simultaneously vanish at any point. Thus we can define a map $L:\Sigma \rightarrow S^{2}$ and a $2$-form $\Omega$ on $\Sigma$ such that $\Omega$ is positive with respect to the orientation of $\Sigma$ and the $\bR^{3}$-valued $2$-form $\underline{\lambda}=(\lambda_{1}, \lambda_{2}, \lambda_{3})$ can be written
$ \underline{\lambda}=  L \Omega$.

We claim that the condition that $\psi$ is a strongly pseudoconvex form on $M$ is equivalent to the condition that
$L$ is an oriented local diffeomorphism. To see this, at a given point $p$ in $\Sigma$ we can assume (by rotating axes) that $\lambda_{2}, \lambda_{3}$ vanish and $\lambda_{1}$ is non-zero. Take oriented local co-ordinates $(u,v)$ on $\Sigma$ near $p$ so that $\Omega= du dv$.  Thus
$  \lambda_{a}= L_{a} du dv $. 
Then, near $p$,
$$\psi= (\sum L_{a} dt_{a}) \left( L_{1}^{-1} dt_{2}dt_{3}- du dv\right), $$
so $\theta_{0}= \sum L_{a} dt_{a}$ defines the subbundle $H$ and $\psi=\theta_{0}\wedge \alpha_{0}$ with
$\alpha_{0}= L_{1}^{-1} dt_{2} dt_{3} - dudv$. At the point $p$ the restriction of $d\theta_{0}$ to $H$ is
$$  \omega_{0}\vert_{H} = \frac{\partial L_{2}}{\partial u} du dt_{2} + \frac{\partial L_{3}}{\partial u} du dt_{3} + \frac{\partial L_{2}}{\partial v} dvdt_{2} +\frac{\partial L_{3}}{\partial v} dv dt_{3}$$ so 
$\omega_{0}\vert_{H}^{2}= -J du dv dt_{2} dt_{3}$ where
$$J=
 \frac{\partial L_{2}}{\partial u}\frac{\partial L_{3}}{\partial v} - \frac{\partial L_{2}}{\partial v}\frac{\partial L_{3}}{\partial u}, $$
 while $ \alpha_{0}\vert_{H}^{2}= -  2(du dv dt_{2} dt_{3}) $,
since $L_{1}=1$. The claim now follows because $J$ is the determinant of the derivative of $L$ at $p$ with respect to the area forms $du dv$ on $\Sigma$ and the standard area form $dA_{S^{2}}$ on $S^{2}$.

 For a map $f:\Sigma\rightarrow \bR^{3}$ the condition (1) is equivalent to the three statements:
\begin{enumerate}
\item $f$ is an immersion, so the image an immersed surface $X\subset \bR^{3}$.
\item the oriented normal to $X$ at $f(p)$ is $L(p)$.
\item the pull-back by $f$  of the (oriented) area form $dA_{X}$ on $X$ at $p$ is $\Omega(p)$.
\end{enumerate}

Since $S^{2}$ is simply connected the local diffeomorphism $L$ from $\Sigma$ to $S^{2}$ is a global diffeomorphism (so to have a pseudoconvex form $\psi$ of this shape we must suppose that $\Sigma$ is diffeomorphic to $S^{2}$). Define a positive function $K$ on $S^{2}$ by
$$  \Omega= L^{*}(K^{-1} dA_{S^{2}}). $$
Thus the function $K$ on $S^{2}$ is determined by the original data $\underline{\lambda}$.
Let $g= f \circ L^{-1}: S^{2}\rightarrow X\subset \bR^{3}$. The above statements about $f$  are equivalent to the statements that for all $\nu\in S^{2}$ the normal to $X$ at $g(\nu)$ is $\nu$ and the Gauss curvature of $X$ at $g(\nu)$ is $K(\nu)$. This is the usual formulation of the Minkowski problem for the map $g$, with prescribed Gauss curvature $K$ as a function of the normal direction. The solution of the Minkowski problem tells us that for pseusdoconvex data $\lambda_{a}$ satisfying the obvious conditions (2) there is a  solution $f$, unique up to translations.

\section{Linear Analysis}
\label{section_linear_analysis}

In this chapter we describe the linear analysis on a closed $5$-manifold which carries an oriented contact structure $H\subset TM$ with contact $1$-form $\theta$ and Reeb vector field $v$, and a Euclidean metric $g_H$ on $H$, which we extend to a Riemannian metric $g=\theta^2+g_H$ on $M$. 
Set $\omega:=d\theta$. In particular, we can apply this theory to contact hyperk\"ahler structures.

We start by describing the main differential operators. As in \eqref{d_H}
denote by $d_H: \Omega^p_H \rightarrow \Omega^{p+1}_H$ the projection of the exterior derivative to $H$. Denote by $d_H^*$ the $L^2$-adjoint of $d_H$ with respect to the metric $g$. 
$d_H^*$ explicitly is given by a formula analogous to the Riemannian setting.
\begin{lemma}
\label{Lemma-adjoint}
The adjoint of $d_H$ with respect to $g$ is given by $d^*_H=- *d_H *$, where $*$ denotes the Hodge star operator on $H$.
\end{lemma}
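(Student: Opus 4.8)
The plan is to reduce everything to Stokes' theorem on the closed manifold $M$, mimicking the standard Riemannian derivation of $\delta=-*d*$ in four dimensions, and to note that the one term genuinely new to the contact setting drops out for dimensional reasons.

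First I would unwind the $L^2$-pairing. Since $g=\theta^2+g_H$ and $\theta$ is a unit covector orthogonal to $H$, the Riemannian volume form of $g$ is $\theta$ wedged with that of $g_H$, and on $\Omega^p_H$ the $g$-inner product agrees with the one induced by $g_H$ on $H$; together with the pointwise identity $(\sigma,\tau)_{g_H}\,\mathrm{dvol}_{g_H}=\sigma\wedge*\tau$ on the $4$-dimensional bundle $H$, this gives $\langle\sigma,\tau\rangle_{L^2}=\int_M\theta\wedge\sigma\wedge*\tau$ for $\sigma,\tau\in\Omega^p_H$. Taking $\sigma\in\Omega^{p-1}_H$ and $\tau\in\Omega^p_H$, I would use that $d\sigma=d_H\sigma+\theta\wedge\mathcal{L}_v\sigma$ by the splitting \eqref{d_H}--\eqref{d_f}, together with $\theta\wedge\theta=0$, to replace $d_H\sigma$ by $d\sigma$, so $\langle d_H\sigma,\tau\rangle_{L^2}=\int_M\theta\wedge d\sigma\wedge*\tau$.

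The heart of the argument is then Stokes applied to $d(\theta\wedge\sigma\wedge*\tau)$. Expanding by Leibniz,
\begin{align*}
d(\theta\wedge\sigma\wedge*\tau)=\omega\wedge\sigma\wedge*\tau-\theta\wedge d\sigma\wedge*\tau-(-1)^{p-1}\theta\wedge\sigma\wedge d(*\tau),
\end{align*}
and the crucial observation is that $\omega\wedge\sigma\wedge*\tau$ is a section of $\Lambda^5 H^*=0$, since $H$ has rank $4$; this is exactly why the contact correction one might fear from $d\theta=\omega\neq0$ never appears and the answer is the naive one. Integrating over $M$ and discarding the $\theta\wedge\mathcal{L}_v(*\tau)$-part after wedging with $\theta$ once more yields $\langle d_H\sigma,\tau\rangle_{L^2}=(-1)^p\int_M\theta\wedge\sigma\wedge d_H(*\tau)$.

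Finally I would turn $\sigma\wedge d_H(*\tau)$ back into an inner product: on the $4$-dimensional $H$ the Hodge star is an isomorphism $\Omega^{p-1}_H\to\Omega^{5-p}_H$ with $*\,*=(-1)^{(p-1)(5-p)}\,\mathrm{id}$ on $\Omega^{p-1}_H$, so $\sigma\wedge d_H(*\tau)=(-1)^{(p-1)(5-p)}(\sigma,*d_H*\tau)_{g_H}\,\mathrm{dvol}_{g_H}$, giving $\langle d_H\sigma,\tau\rangle_{L^2}=(-1)^{p+(p-1)(5-p)}\langle\sigma,*d_H*\tau\rangle_{L^2}$. A direct check shows $p+(p-1)(5-p)$ is odd for every $p\in\{1,2,3,4\}$, so $d_H^*=-*d_H*$. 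I expect the only real obstacle to be this sign bookkeeping together with spotting that the rank-$4$ condition kills the $\omega$-term; the rest is a verbatim transcription of the familiar Riemannian computation.
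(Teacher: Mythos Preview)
Your argument is correct and follows essentially the same route as the paper: write the $L^2$-pairing as $\int_M d_H\eta\wedge*\zeta\wedge\theta$, replace $d_H$ by $d$, integrate by parts via Stokes, and then undo the Hodge star. You are in fact more explicit than the paper on the one nontrivial point---that the $\omega\wedge\sigma\wedge*\tau$ term produced by $d\theta$ lands in $\Lambda^5H^*=0$---which the paper's computation uses silently when passing from $\int_M d\eta\wedge*\zeta\wedge\theta$ to $(-1)^k\int_M\eta\wedge d*\zeta\wedge\theta$.
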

\begin{proof}
For $\eta\in\Omega^{k-1}_H$ and $\zeta\in\Omega^{k}_H$ we have
\begin{align*}
(\eta,d_H^* \zeta)
&=
( d_H\eta, \zeta )
=
\int_M \langle d_H\eta,\zeta\rangle \Vol
=
\int_M d_H\eta\wedge*\zeta\wedge\theta
=
\int_M d\eta\wedge *\zeta\wedge\theta
\\
&=
(-1)^{k} \int_M \eta\wedge d*\zeta\wedge\theta
=
(-1)^{k} \int_M \eta\wedge d_H * \zeta\wedge\theta
\\
&=
- \int_M \eta\wedge* (*d_H*)\zeta\wedge\theta
\\
&=
( \eta, - *d_H* \zeta ).
\end{align*}
\end{proof}
This allows us to define a Laplacian
\begin{align*}
\Delta_H:= d_H d_H^* + d_H^*d_H:
\Omega^p_H \rightarrow \Omega^p_H.
\end{align*}
As in \eqref{splitting-2-forms} we have a splitting $\Lambda^2_H=\Lambda^+_H\oplus\Lambda^-_H$ with respect to $g_H$.
The main operator in this article is the derivative
\begin{align*}
d_H^-:= \frac{1}{2}(d_H-*d_H): \Omega^1_H \rightarrow \Omega^-_H,
\end{align*}
the projection of $d_H:\Omega^1_H\rightarrow \Omega^2_H$ to anti-self-dual forms. By Lemma \ref{Lemma-adjoint} its $L^2$-adjoint is given by
\begin{align}
(d_H^-)^*=\frac{1}{2}(d_H-*d_H)^*=d_H^*.
\end{align}
To solve the linearisation of the embedding problem, we need to solve an equation of the form
\begin{align}
d_H^-\eta=\sigma
\label{main_equ}
\end{align}
for a given right-hand side $\sigma\in\Omega^-_H$. 
We will study this equation by considering the Laplacian
\begin{align*}
\Box_H:= d_H^- d_H^*: \Omega^-_H \rightarrow \Omega^-_H,
\end{align*}
which equals $\frac{1}{2}\Delta_H$ restricted to $\Omega^-_H$.

\subsection{Adapted connections}

To work with the operators introduced above, it will be useful to choose a metric connection which preserves $H$. The Levi--Civita connection $\nabla^{\mathrm{LC}}$ does not preserve $H$, so choosing such a connection comes at the cost of introducing torsion. We wish to use a connection which ``looks'' torsion-free on $H$, i.e. the torsion tensor does not have a component in $\Lambda_H^2\otimes H$.

\begin{lemma}
\label{lemma-connection}
There exists a connection $\nabla$ on $TM$ with the following properties: 
\begin{compactenum}[(a)]
\item $\nabla$ is metric, i.e. $\nabla g = 0$,
\item $\nabla$ preserves $H$, i.e. $\nabla_X Y \in\Gamma(H)$ for any $X\in\Gamma(TM)$ and $Y\in\Gamma(H)$,
\item the torsion tensor $T$ of $\nabla$ has no component in $\Lambda_H^2\otimes H$,
\item $\nabla v =0$.
\end{compactenum}
Moreover, $\nabla$ is unique up to a skew-symmetric endomorphism of $H$ and satisfies
\begin{compactenum}[(i)]
\item
If $X,Y\in\Gamma(H)$ and $\gamma \in \Omega^p_H$, then
\begin{align*}
\nabla_X Y = \pi_H(\nabla^{\mathrm{LC}}_X Y),
\quad\quad
\nabla_X \gamma = \nabla^{\mathrm{LC}}_X \gamma|_H.
\end{align*}
\item
If $e_1, \dots, e_4$ is a local orthonormal frame for $H$ with dual orthonormal co-frame $e^1, \dots, e^4$, then
\begin{align*}
d_H = \sum_{i=1}^4 e^i \wedge\nabla_i, \quad\quad
d_H^* = - \sum_{i=1}^4 e_i \lrcorner \nabla_i,
\end{align*}
where we write $\nabla_i$ for $\nabla_{e_i}$.
\item
Define $B$ by 
\begin{align*}
\nabla^{LC}_X Y = \nabla_X Y + B(X,Y) v, \quad X,Y\in\Gamma(H).
\end{align*}
Then 
\begin{align*}
B(X,Y)=
-\frac{1}{2}(\mathcal{L}_v g)(X,Y)-\frac{1}{2}\omega(X,Y).
\end{align*}
\item 
For $X,Y\in\Gamma(H)$ we have
\begin{align*}
T(X,Y)=\omega(X,Y)v.
\end{align*}
\item
Write $T_v := T(v,\cdot)$ for the contraction of the torsion tensor with $v$. Then $T_v\in\mathrm{End}(H)$. Furthermore, if we decompose $T_v=T_v^s+T_v^a$, where $T_v^s$ is symmetric and $T_v^a$ is skew-symmetric with respect to $g$, then 
\begin{align*}
g(T_v^s X,Y)=\frac{1}{2}\mathcal{L}_vg(X,Y)
\end{align*}
for all $X,Y\in\Gamma(H)$.
\end{compactenum}
\end{lemma}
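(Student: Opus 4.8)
The plan is to construct $\nabla$ explicitly by correcting the Levi--Civita connection, verify the four defining properties, and then read off the consequences (i)--(vi) from the construction. First I would define $\nabla$ on sections of $H$ by orthogonally projecting the Levi--Civita connection: for $X \in \Gamma(TM)$ and $Y \in \Gamma(H)$ set $\nabla_X Y := \pi_H(\nabla^{\mathrm{LC}}_X Y)$, where $\pi_H$ is the $g$-orthogonal projection onto $H$; and declare $\nabla_X v := 0$, extending by the Leibniz rule to all of $TM = \mathbb{R}v \oplus H$. Property (b) holds by construction, and (d) by definition. For (a), metricity on $H$ follows because $\pi_H$ is the orthogonal projection and $\nabla^{\mathrm{LC}}$ is metric: differentiating $g(Y,Y')$ for $Y,Y' \in \Gamma(H)$ and using that the $\mathbb{R}v$-components of $\nabla^{\mathrm{LC}}_X Y$ pair trivially with $Y'$ gives the claim; metricity involving $v$ is immediate from $\nabla v = 0$ and $|v|_g = 1$, $v \perp H$. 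The substantive definition is the tensor $B(X,Y) = g(\nabla^{\mathrm{LC}}_X Y, v)$ for $X,Y \in \Gamma(H)$, whose value I would compute from the Koszul formula: using $g(Y,v)=0$, one gets $2B(X,Y) = X g(Y,v) + Y g(X,v) - v\,g(X,Y) + g([X,Y],v) - g([X,v],Y) - g([Y,v],X) = -v\,g(X,Y) + g([X,Y],v) - g([X,v],Y) - g([Y,v],X)$. Now $g([X,Y],v) = \theta([X,Y]) = -d\theta(X,Y) = -\omega(X,Y)$ (since $\theta$ vanishes on $H$), and the remaining three terms combine to $-(\mathcal{L}_v g)(X,Y)$ after recognising $(\mathcal{L}_v g)(X,Y) = v\,g(X,Y) - g([v,X],Y) - g(X,[v,Y])$. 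This yields (iii): $B(X,Y) = -\tfrac12(\mathcal{L}_v g)(X,Y) - \tfrac12\omega(X,Y)$.

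Next I would verify the torsion property (c) and derive (v), (vi). The torsion of $\nabla$ is $T(X,Y) = \nabla_X Y - \nabla_Y X - [X,Y]$. For $X,Y \in \Gamma(H)$, the $H$-component is $\pi_H(\nabla^{\mathrm{LC}}_X Y - \nabla^{\mathrm{LC}}_Y X - [X,Y]) = \pi_H(T^{\mathrm{LC}}(X,Y)) = 0$ since $T^{\mathrm{LC}} = 0$; the $\mathbb{R}v$-component is $-\theta([X,Y])v = \omega(X,Y)v$, giving (v) and confirming that $T$ has no $\Lambda^2_H \otimes H$ component, which is (c). (The torsion also has $v$-contraction and $\theta$-valued pieces, but none lies in $\Lambda^2_H \otimes H$.) For (vi), $T(v,X) = \nabla_v X - \nabla_X v - [v,X] = \pi_H(\nabla^{\mathrm{LC}}_v X) - [v,X]$, which lies in $H$ because both terms do (the second since $[v,X] = \mathcal{L}_v X$ and $\mathcal{L}_v$ preserves the splitting, as noted in the excerpt). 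Pairing with $Y \in \Gamma(H)$ and again using the Koszul formula plus $\nabla^{\mathrm{LC}}$ metric, the skew part of $T_v$ absorbs terms like $\omega$ and the symmetric part picks out exactly $g(T_v^s X, Y) = \tfrac12 (\mathcal{L}_v g)(X,Y)$; concretely $g(T(v,X),Y) + g(T(v,Y),X) = g(\nabla^{\mathrm{LC}}_v X, Y) + g(\nabla^{\mathrm{LC}}_v Y, X) - g([v,X],Y) - g([v,Y],X) = v\,g(X,Y) - g([v,X],Y) - g([v,Y],X) = (\mathcal{L}_v g)(X,Y)$, which is (vi).

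For the uniqueness clause and items (i), (ii): if $\nabla'$ is another connection satisfying (a)--(d), then $A := \nabla' - \nabla$ is a tensor with $A_X v = 0$ and, by (b), $A_X Y \in \Gamma(H)$ for $Y \in \Gamma(H)$; metricity (a) forces $g(A_X Y, Y') = -g(Y, A_X Y')$, so $A_X|_H$ is skew-symmetric; and the torsion condition (c) compared for $\nabla$ and $\nabla'$ forces the $\Lambda^2_H\otimes H$-part of $A$, i.e. its total antisymmetrisation in $X,Y \in \Gamma(H)$, to vanish — the standard argument that a metric, symmetric-on-$H$ correction vanishes then shows $A_X Y = 0$ for $X,Y \in \Gamma(H)$, while $A_X|_H$ for $X = v$ (equivalently $X$ normal) remains free as a skew endomorphism of $H$, which is precisely the stated ambiguity. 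Item (i) is just the definition together with the dual statement for forms: $(\nabla_X \gamma)(Y_1,\dots,Y_p) = X(\gamma(Y_1,\dots)) - \sum \gamma(\dots,\nabla_X Y_j,\dots)$ and since $\nabla_X Y_j = \pi_H \nabla^{\mathrm{LC}}_X Y_j$ agrees with $\nabla^{\mathrm{LC}}_X Y_j$ modulo $v$, and $\gamma \in \Lambda^p H^*$ annihilates $v$, this equals $(\nabla^{\mathrm{LC}}_X\gamma)|_H$. Finally (ii) is the usual Weitzenböck-type identity: for a torsion-free-on-$H$ metric connection, $d_H\gamma = \sum e^i \wedge \nabla_i \gamma$ and $d_H^*\gamma = -\sum e_i \lrcorner \nabla_i \gamma$ follow from expanding $d_H$ in a frame and using that the antisymmetrisation of $\nabla$ on $H$ reproduces $d_H$ exactly because the $\Lambda^2_H\otimes H$-torsion vanishes (the only torsion present, the $\omega \cdot v$ term, is killed by $\pi_H$); the adjoint formula then matches Lemma \ref{Lemma-adjoint}.

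The main obstacle is organisational rather than deep: keeping careful track of which components of the Levi--Civita connection and of the various Lie brackets land in $\mathbb{R}v$ versus $H$, and being precise about what "no $\Lambda^2_H \otimes H$ component" means for the torsion (it is a statement purely about the antisymmetrised $H$-valued part for $H$-inputs, not about $T_v$ or the $\theta$-valued parts). Once the Koszul-formula bookkeeping for $B$ is done, everything else is a short deduction.
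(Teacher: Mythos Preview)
Your proposal is correct and follows essentially the same computations as the paper: the Koszul formula for $B$, the torsion calculation giving $T(X,Y)=\omega(X,Y)v$, and the metricity argument yielding $g(T_v^s X,Y)=\tfrac12\mathcal{L}_v g(X,Y)$ all match. The only organisational difference is that the paper first \emph{assumes} (a)--(d) and derives (i)--(v) as necessary consequences (in particular deducing $\nabla_X Y=\pi_H(\nabla^{\mathrm{LC}}_X Y)$ via Koszul rather than taking it as a definition), and only then writes down the explicit formula for $\nabla$ to establish existence; you instead define $\nabla$ as the $H$-projection of Levi--Civita from the outset and verify (a)--(d) afterwards. One small point: your global prescription $\nabla_v X:=\pi_H(\nabla^{\mathrm{LC}}_v X)$ already selects a particular member of the family (it fixes $T_v^a$ to be $\tfrac12\omega$), whereas the paper leaves $T_v^a$ as a free skew endomorphism of $H$; your uniqueness paragraph correctly identifies this freedom, so the argument is complete, but it would be cleaner to flag that your initial definition is one specific choice rather than the general one. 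Also note your labels (v), (vi) should read (iv), (v).
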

\begin{proof}
We first assume that $\nabla$ exists and derive the properties (i)-(v) from the properties (a)-(d).
\\
\textbf{(i):}
If $X,Y,Z\in\Gamma(H)$, by (c) we have 
\begin{align*}
g(\nabla_X Y - \nabla_Y X, Z)
&=
g([X,Y],Z)+g(T(X,Y),Z)
\\
&=
g([X,Y],Z).
\end{align*}
Thus we can use $\nabla g =0$ as in the standard Riemannian setting to get the Koszul formula
\begin{align*}
2g(\nabla_X Y,Z)
=&
Xg(Y,Z)+Yg(Z,X)-Zg(X,Y)
\\
&+g([X,Y],Z)-g([X,Z],Y)-g([Y,Z],X)
\\
=&
2g(\nabla^{\mathrm{LC}}_X Y,Z).
\end{align*}
Because $\nabla$ preserves $H$, this means $\nabla_X Y = \pi_H(\nabla^{\mathrm{LC}}_X Y)$.
For $\gamma\in\Omega_H^p$ and $Y_1, \dots, Y_p\in\Gamma(H)$ we have
\begin{align*}
(\nabla_X \gamma)(Y_1, \dots, Y_p)
&=
X(\gamma(Y_1,\dots, Y_p))
-\sum_{j=1}^p
\gamma(Y_1, \dots, \nabla_X Y_j, \dots, Y_p)
\\
&=
X(\gamma(Y_1,\dots, Y_p))
-\sum_{j=1}^p
\gamma(Y_1, \dots, \nabla^{\mathrm{LC}}_X Y_j, \dots, Y_p)
\\
&=
(\nabla^{\mathrm{LC}}_X \gamma)(Y_1, \dots, Y_p).
\end{align*}
\textbf{(ii):}
The usual formula for the exterior derivative in terms of the torsion-free connection $\nabla^{\mathrm{LC}}$ is
\begin{align*}
d = \sum_{i=1}^4 e^i \wedge \nabla^{LC}_i + \theta\wedge\nabla^{LC}_v.
\end{align*}
By (i) we get
\begin{align*}
d_H = \sum_{i=1}^4 e^i \wedge \nabla^{LC}_i|_H
=
\sum_{i=1}^4 e^i \wedge \nabla_i.
\end{align*}
Because $\nabla$ is metric, the above formula for $d_H$ 
implies the formula for $d_H^*$ in the usual way.
\textbf{(iii):}
From (i) we have
\begin{align*}
\nabla^{LC}_X Y=\nabla_X Y + \theta(\nabla^{LC}_X Y)v.
\end{align*}
The standard Koszul formula for the Levi--Civita connection gives
\begin{align*}
2B(X,Y)=
2\theta(\nabla^{LC}_X Y)
=
-vg(X,Y)+g([X,Y],v)-g([Y,v],X)-g([X,v],Y)
\\
=
-(\mathcal{L}_v g)(X,Y)+\theta([X,Y])
=
-(\mathcal{L}_v g)(X,Y)-\omega(X,Y).
\end{align*}
\textbf{(iv):}
By (iii) the torsion is
\begin{gather*}
T(X,Y)
=
\nabla_X Y-\nabla_Y X-[X,Y]
\\
=
\nabla^{LC}_X Y-\nabla^{LC}_Y X-[X,Y]+\frac{1}{2}((\mathcal{L}_v g)(X,Y)+\omega(X,Y))v-\frac{1}{2}((\mathcal{L}_v g)(Y,X)+\omega(Y,X))v
\\
=\omega(X,Y)v.
\end{gather*}
\textbf{(v):}
Because $\nabla v=0$, for $X\in\Gamma(H)$ we have
\begin{align}
\nabla_v X = \nabla_X v + [v,X] + T(v,X)
=
\mathcal{L}_v X + T_v X,
\label{covariant_v_derivative}
\end{align}
Because $\nabla$ and $\mathcal{L}_v$ preserve $H$, we get $T_v\in\mathrm{End}(H)$. $\nabla g =0$ and formula \eqref{covariant_v_derivative} imply for $X,Y\in\Gamma(H)$
\begin{align*}
0=\nabla_v g(X,Y)=\mathcal{L}_v g(X,Y) -g(T_v X,Y)-g(X,T_v Y)
=
\mathcal{L}_v g(X,Y)-2 g(T_v^s X,Y).
\end{align*}
We now come to the existence of $\nabla$. By (a)-(d) and (i)-(v) we need to define $\nabla$ as
\begin{align*}
\nabla_X Y = \pi_H(\nabla^{\mathrm{LC}}_X Y), \quad
\nabla v = 0, 
\quad g(\nabla_v X,Y) = g(\mathcal{L}_v X,Y) + \frac{1}{2}\mathcal{L}_v g(X,Y) + g(T_v^a X,Y),
\end{align*}
where $X,Y\in\Gamma(H)$.
It is clear that this defines a connection which satisfies properties (a)-(d) and that the only freedom in the construction is the choice of $T_v^a$.
\end{proof}

A connection as in Lemma \ref{lemma-connection} allows us to compute a Weitzenb\"ock formula for $\Delta_H$. The choice of $T_v^a$ only influences the curvature term.

\begin{lemma}
\label{lemma-general_Weitzenbock}
Let $e_1, \dots, e_4$ be a local orthonormal frame for $H$ with dual co-frame $e^1, \dots, e^4$. For $\gamma\in\Gamma(\Lambda^{\bullet}_H)$, denote by $\nabla_H\gamma:= \sum_{i=1}^4 e^i\otimes \nabla_{e_i}\gamma$ the covariant derivative in the ``$H$-direction''.
Denote by $\varepsilon^k$ the wedge product with $e^k$ and by $\iota^k$ the contraction with $e_k$. Then for $\Delta_H$ we have the Weitzenb\"ock formula
\begin{align}
\label{general_Bochner_formula}
\Delta_H
=
\nabla_H^*\nabla_H 
+ \sum_{k,l} \omega_{kl}\, \varepsilon^k\iota^l \nabla_v
+ \sum_{k,l,m,n} \varepsilon^k \iota^l \varepsilon^m \iota^n R^{\nabla}_{klmn},
\end{align}
where $R^{\nabla}$ denotes the curvature tensor of the connection $\nabla$.
\end{lemma}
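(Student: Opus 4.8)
The plan is to run the classical Bochner--Weitzenb\"ock computation for $\Delta_H=d_H d_H^*+d_H^* d_H$ in a $\nabla$-adapted frame, keeping careful track of the single place where the torsion of $\nabla$ enters; that is exactly what will produce the first-order term $\sum_{k,l}\omega_{kl}\,\varepsilon^k\iota^l\nabla_v$, which is absent in the torsion-free Riemannian case. First I would fix a point $p\in M$ and choose the local orthonormal frame $e_1,\dots,e_4$ of $H$ to be $\nabla$-normal at $p$, meaning $\nabla_X e_i|_p=0$ for every $X\in T_pM$ (in particular also for $X=v$). This is possible because the restriction of $\nabla$ to $H$ is an $\SO(4)$-connection, so a gauge transformation of the frame valued in $\SO(4)$ and equal to the identity at $p$, with differential at $p$ equal to minus the connection form, kills the connection form at $p$. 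With such a frame one has, at $p$, $\nabla_H^*\nabla_H=-\sum_i\nabla_i\nabla_i$, the $e^i$ have vanishing covariant derivative so they may be pulled through $\nabla$, and the contraction rule $\varepsilon^i\iota^j+\iota^j\varepsilon^i=\delta^{ij}$ holds.

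Using the expressions $d_H=\sum_i\varepsilon^i\nabla_i$ and $d_H^*=-\sum_i\iota^i\nabla_i$ from Lemma~\ref{lemma-connection}(ii) and evaluating at $p$, I would get
\[
d_H d_H^*\gamma=-\sum_{i,j}\varepsilon^i\iota^j\nabla_i\nabla_j\gamma,\qquad d_H^* d_H\gamma=-\sum_{i,j}\iota^j\varepsilon^i\nabla_j\nabla_i\gamma .
\]
Substituting $\iota^j\varepsilon^i=\delta^{ij}-\varepsilon^i\iota^j$ in the second term and adding gives
\[
\Delta_H\gamma=-\sum_i\nabla_i\nabla_i\gamma+\sum_{i,j}\varepsilon^i\iota^j\bigl(\nabla_j\nabla_i-\nabla_i\nabla_j\bigr)\gamma
\]
at $p$, whose leading part is $\nabla_H^*\nabla_H\gamma$. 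It then remains to decompose the commutator: writing $\nabla_j\nabla_i-\nabla_i\nabla_j=R^\nabla(e_j,e_i)+\nabla_{[e_j,e_i]}$ and using that at $p$ one has $[e_j,e_i]=-T(e_j,e_i)=\omega_{ij}v$ by Lemma~\ref{lemma-connection}(iv), the commutator splits into the curvature endomorphism $R^\nabla(e_j,e_i)$ and the term $\sum_{i,j}\omega_{ij}\varepsilon^i\iota^j\nabla_v$, which is precisely the claimed middle term. Finally, expanding the curvature endomorphism as a derivation of $\Lambda^\bullet_H$, namely $R^\nabla(e_j,e_i)=-\sum_{m,n}g(R^\nabla(e_j,e_i)e_n,e_m)\,\varepsilon^m\iota^n$ on forms, turns $\sum_{i,j}\varepsilon^i\iota^jR^\nabla(e_j,e_i)$ into $\sum_{k,l,m,n}\varepsilon^k\iota^l\varepsilon^m\iota^n R^\nabla_{klmn}$, where the index and sign convention for $R^\nabla_{klmn}$ is the one fixed by this identity. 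Since the point $p$ was arbitrary and both sides of \eqref{general_Bochner_formula} are well-defined differential operators, this proves the lemma.

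The computation is routine; the one feature that needs genuine care, and the only thing distinguishing this from the standard Weitzenb\"ock formula on a Riemannian manifold, is the torsion contribution. The key identity is $[e_j,e_i]|_p=-T(e_j,e_i)=\omega(e_j,e_i)v$, which feeds a $\nabla_v$-derivative into the formula; so the main thing to get right is the sign and placement of the $\omega_{kl}\varepsilon^k\iota^l$ factor in the first-order term, together with the ordering of the $\varepsilon$'s and $\iota$'s in the quartic curvature term. A minor but real point is to verify that the frame can be made normal at $p$ in the $v$-direction as well, not just in the $H$-directions, since otherwise extra first-order terms would appear.
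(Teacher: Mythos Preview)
Your proof is correct and follows essentially the same route as the paper's: choose a $\nabla$-normal frame at $p$, expand $d_Hd_H^*+d_H^*d_H$ via Lemma~\ref{lemma-connection}(ii), and use the commutator identity together with $T(e_k,e_l)=\omega_{kl}\,v$ from Lemma~\ref{lemma-connection}(iv) to produce the first-order $\nabla_v$ term and the curvature term. The only cosmetic difference is that the paper packages the commutator step as the Ricci identity $\nabla^2_{k,l}-\nabla^2_{l,k}=R^\nabla(e_k,e_l)_*-\nabla_{T(e_k,e_l)}$ for a connection with torsion rather than writing $[e_j,e_i]|_p=-T(e_j,e_i)$ explicitly; incidentally, normality of the frame in the $v$-direction is not actually needed, since only $\nabla_j e_i|_p=0$ for $j\in\{1,\dots,4\}$ enters the manipulation.
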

\begin{proof}
For a given $p\in M$, we can choose the frame $\{e_i\}$ such that $\nabla e_i|_{p} =0, i=1,\dots, 4$. This implies $\nabla_k(\iota^l \gamma)|_p = \iota^l \nabla_k \gamma|_p$ and $\nabla_k \nabla_l \gamma|_p = \nabla^2_{k,l} \gamma|_p$. Lemma \ref{lemma-connection} and the Ricci formula give
\begin{align*}
d_H d_H^* \gamma|_p
&=
-\sum_{k,l=1}^4 \varepsilon^k \nabla_k( \iota^l \nabla_l \gamma)|_p
=
-\sum_{k,l=1}^4 \varepsilon^k \iota^l \nabla_k\nabla_l\gamma|_p,
\\
d_H^* d_H \gamma|_p
&=
- \sum_{k,l=1}^4
\iota^k \nabla_k (\varepsilon^l\nabla_l \gamma)|_p
=
- \sum_{k,l=1}^4
\iota^k \varepsilon^l \nabla_k \nabla_l \gamma|_p,
\\
\Delta\gamma|_p
&=
- \sum_{k,l=1}^4
(\varepsilon^k \iota^l+\iota^k\varepsilon^l) \nabla^2_{k,l}\gamma|_p
=
-\sum_{k=1}^4 \nabla_k \nabla_k \gamma|_p
- \sum_{k < l}
(\varepsilon^k \iota^l+\iota^k\varepsilon^l) (\nabla^2_{k,l}\gamma|_p - \nabla^2_{l,k}\gamma|_p)
\\
&=
\nabla_H^*\nabla_H\gamma|_p 
- 
\sum_{k < l}
(\varepsilon^k \iota^l+\iota^k\varepsilon^l)
(R^{\nabla}(e_k,e_l)_*\gamma|_p - \nabla_{T(e_k,e_l)}\gamma|_p)
\\
&=
\nabla_H^*\nabla_H\gamma|_p 
- 
\sum_{k < l}
(\varepsilon^k \iota^l+\iota^k\varepsilon^l)
(R^{\nabla}(e_k,e_l)_*\gamma|_p - \omega(e_k,e_l) \nabla_{v}\gamma|_p)
\\
&=
\nabla_H^*\nabla_H\gamma|_p 
+\sum_{k,l=1}^4
\omega(e_k,e_l)
\varepsilon^k \iota^l  \nabla_{v}\gamma|_p
- 
\sum_{k,l=1}^4
\varepsilon^k \iota^l
R^{\nabla}(e_k,e_l)_*\gamma|_p.
\end{align*}
This gives the desired formula.
\end{proof}

\subsection{Sub-ellipticity}
\label{section-subelliptic}

In this section we describe the analytic properties of the operators $d_H^-$ and $\Box$. 
We adopt the convention from \cite{FollandKohn} that
\begin{align*}
``f(x) \lesssim g(x)\text{''}
\quad
\text{means}
\quad
``\exists C > 0\, \text{such that}\, f(x) \leq C g(x) \forall x\text{''}.
\end{align*}

By choosing an atlas for $M$ and a subordinate partition of unity
we can define as usual Sobolev spaces $L^2_s$ for sections of $TM$ and its associated bundles. We denote the norm of $L^2_s$ by $\|\cdot \|_s$. 
Here $s$ is the number of derivatives if it is an integer, but we also need to consider non-integral $s$. For details on fractional Sobolev spaces we refer to \cite[Appendix 1. and 2.]{FollandKohn}.
We also use $\mathcal{C}^k$-norms which we denote by $[[\,\cdot\, ]]_k$. Here $k$ is an integer. The splitting \eqref{splitting-forms} allows us to consider $\Lambda^{\bullet}_H$ and $\Lambda^{\pm}_H$ as subbundles of $\Lambda^{\bullet} T^*M$, and thus we also have norms for sections of these bundles.

Introduce the bilinear form
\begin{align}
\label{Q}
Q(\sigma,\sigma) = ((\Box_H + \mathbbm{1}) \sigma, \sigma)
= (d_H^-d_H^* \sigma,\sigma) +(\sigma,\sigma)
= \|d_H^*\sigma\|^2 + \|\sigma\|^2.  
\end{align}
Because $d_H^*=*d_H$ on $\Gamma(\Lambda_H^-)$ and $*$ acts isometrically, we also have the identity
\begin{align}
\label{d-identity-Q}
Q(\sigma,\sigma) = \|d_H \sigma\|^2 + \|\sigma\|^2.
\end{align}
The operator \eqref{general_Bochner_formula} is not elliptic as it does not see second derivatives in the direction of $v$. In particular, the bilinear form $Q$ is not coercive, i.e. there is \textit{no} estimate of the form
\begin{align*}
\|\sigma\|^2_1 \lesssim Q(\sigma,\sigma).
\end{align*}
However, because $H$ is a contact structure, the Reeb vector field $v$ locally can be written as a commutator of sections of $H$. In other words, a local frame of $H$ satisfies the H\"ormander condition. This leads to the fundamental ``1/2-estimate'' for the ``rough Laplacian'' $\nabla_H^*\nabla$. 

\begin{proposition}
\label{Lemma-1/2-estimate}
For $\phi\in\Omega^{\bullet}_H$ we have 
\begin{align}
\|\phi\|_{\frac{1}{2}}^2 
\lesssim
(\nabla^*_H\nabla_H \phi,\phi)+\|\phi\|^2
.
\label{general-12-estimate}
\end{align}
\end{proposition}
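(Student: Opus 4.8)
The plan is to reduce the inequality to the classical subelliptic estimate of Kohn--Nirenberg / Hörmander for a system of real vector fields satisfying the bracket-generating condition, applied componentwise in a local trivialization. First I would fix a point $p\in M$, a local orthonormal frame $e_1,\dots,e_4$ for $H$ near $p$, and a local trivialization of the bundle $\Lambda^\bullet_H$ by parallel sections with respect to $\nabla$; in this trivialization a section $\phi$ becomes a tuple of scalar functions $(\phi_a)$, and the Dirichlet form $(\nabla_H^*\nabla_H\phi,\phi)$ becomes, up to lower-order terms absorbable into $\|\phi\|^2$ (arising from the fact that the trivializing frame is only approximately parallel and from the Christoffel-type corrections when comparing $\nabla_i$ with $e_i$ acting on components), the sum $\sum_a \sum_{i=1}^4 \|e_i \phi_a\|^2$. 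The key geometric input is that $H$ is a contact distribution: the Reeb field $v$ lies in the first bracket of sections of $H$ (indeed $\omega=d\theta$ is non-degenerate on $H$, so $v$ is a commutator $[e_i,e_j]$ up to elements of $H$), hence $e_1,\dots,e_4$ together span $TM$ after one bracket, i.e. Hörmander's condition holds with step $2$.

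The next step is to invoke the fundamental subelliptic estimate: for real vector fields $X_1,\dots,X_N$ on a manifold satisfying Hörmander's condition with step $2$, one has for compactly supported scalars $u$
\begin{align*}
\|u\|_{1/2}^2 \lesssim \sum_{j=1}^N \|X_j u\|^2 + \|u\|^2.
\end{align*}
This is exactly \cite[Theorem in §8 / the basic $\tfrac12$-estimate]{FollandKohn} (Kohn's estimate for the $\db_b$-Laplacian rests on the same inequality), and the step $2$ case is the original Hörmander sum-of-squares result. Applying this to each component $\phi_a$ with $X_j = e_i$ and summing over $a$ gives $\|\phi\|_{1/2}^2 \lesssim \sum_{i,a}\|e_i\phi_a\|^2 + \|\phi\|^2 \lesssim (\nabla_H^*\nabla_H\phi,\phi) + \|\phi\|^2$ locally; a partition of unity subordinate to a finite cover of $M$ (using that commutators of a cutoff function with $e_i$ are order zero, hence controlled by $\|\phi\|^2$, and that $\|\,\cdot\,\|_{1/2}$ is defined via such a partition) promotes this to the global estimate on the closed manifold $M$.

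The main obstacle I expect is bookkeeping rather than conceptual: one must be careful that passing from the invariant Dirichlet form $(\nabla_H^*\nabla_H\phi,\phi) = \sum_i \|\nabla_{e_i}\phi\|^2$ to the scalar quantity $\sum_{i,a}\|e_i\phi_a\|^2$ only introduces errors bounded by $C\|\phi\|_0\|\phi\|_{1/2}$ or $C\|\phi\|_0^2$, which can be absorbed into the left-hand side using the interpolation/Peter--Paul inequality $\|\phi\|_0\|\phi\|_{1/2}\le \varepsilon\|\phi\|_{1/2}^2 + C_\varepsilon\|\phi\|_0^2$; and, on the purely analytic side, that the cited $\tfrac12$-estimate is genuinely valid for \emph{non-integer} Sobolev index $1/2$, which is why the reference to the fractional Sobolev space appendix of \cite{FollandKohn} is needed. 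Once those absorptions are justified, the estimate follows.
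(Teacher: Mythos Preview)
Your proposal is correct and follows essentially the same approach as the paper: the paper also reduces to the scalar $\tfrac12$-estimate for vector fields satisfying H\"ormander's condition (citing \cite[Theorem 5.4.7]{FollandKohn}), applied componentwise in local coordinate charts, with the contact hypothesis providing the bracket-generating property. Your write-up is in fact more explicit than the paper's two-line proof about the absorption of lower-order terms and the partition-of-unity patching, but there is no genuine difference in strategy.
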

\begin{proof}
Because $H$ is a contact structure, a local frame $e_1, \dots, e_4$ for $H$ satisfies the H\"ormander condition.
Working locally in coordinate charts, we can therefore apply the $1/2$-estimate for functions on $\mathbb{R}^5$ \cite[Theorem 5.4.7]{FollandKohn} to each component to obtain the result.
\end{proof}
Analysing the Weitzenb\"ock formula \eqref{general_Bochner_formula} now shows that $Q$ also satisfies a sub-elliptic estimate.

\begin{proposition}
\label{Proposition-1/2-estimate-Q}
For all $\sigma\in\Omega^-_H$ we have the sub-elliptic estimate
\begin{align}
\|\sigma\|_{\frac{1}{2}}^2 \lesssim  Q(\sigma,\sigma).
\label{1/2-estimate-Q}
\end{align}
\end{proposition}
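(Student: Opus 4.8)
The plan is to reduce the estimate \eqref{1/2-estimate-Q} to the sub-elliptic $1/2$-estimate for the rough Laplacian, Proposition~\ref{Lemma-1/2-estimate}, by means of the Weitzenb\"ock formula \eqref{general_Bochner_formula}. By Proposition~\ref{Lemma-1/2-estimate} applied to $\phi=\sigma$ and the trivial bound $\|\sigma\|^2\le Q(\sigma,\sigma)$, it suffices to show
\begin{align*}
(\nabla_H^*\nabla_H\sigma,\sigma)\lesssim Q(\sigma,\sigma)
\end{align*}
for every $\sigma\in\Omega^-_H$. Pairing \eqref{general_Bochner_formula} with $\sigma$ in $L^2$, I would write
\begin{align*}
(\nabla_H^*\nabla_H\sigma,\sigma)=(\Delta_H\sigma,\sigma)-\Big(\sum_{k,l}\omega_{kl}\,\varepsilon^k\iota^l\nabla_v\sigma,\,\sigma\Big)-\Big(\sum_{k,l,m,n}\varepsilon^k\iota^l\varepsilon^m\iota^n R^\nabla_{klmn}\,\sigma,\,\sigma\Big)
\end{align*}
and estimate the three terms in turn.

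For the first term, since $\Box_H$ equals $\tfrac12\Delta_H$ on $\Omega^-_H$ and $(d_H^-)^*=d_H^*$, one has $(\Delta_H\sigma,\sigma)=2(\Box_H\sigma,\sigma)=2\|d_H^*\sigma\|^2\le 2\,Q(\sigma,\sigma)$ by \eqref{Q}. The third term is algebraic in $\sigma$, so since $M$ is closed and hence $R^\nabla$ is bounded, its absolute value is $\lesssim\|\sigma\|^2\le Q(\sigma,\sigma)$. The crucial point is that the middle term --- the only one involving the ``bad'' direction $\nabla_v$, which $Q$ does not control --- vanishes identically on $\Omega^-_H$. Indeed, $\nabla$ is a metric connection preserving $H$ with $\nabla v=0$, so it also preserves $g_H$, the $1$-form $\theta$, and the volume form on $H$; hence it commutes with the Hodge star $\ast_H$ and preserves the splitting $\Lambda^2_H=\Lambda^+_H\oplus\Lambda^-_H$, and therefore $\nabla_v\sigma\in\Gamma(\Lambda^-_H)$. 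On the other hand the bundle endomorphism $P:=\sum_{k,l}\omega_{kl}\,\varepsilon^k\iota^l$ of $\Lambda^2_H$ annihilates $\Lambda^-_H$: identifying $2$-forms on $H$ with skew endomorphisms via $g_H$, $P$ acts (up to sign) as the commutator with $\omega$ viewed as the complex structure $I$, and $\Lambda^-_H$ consists precisely of the primitive $(1,1)$-forms, which commute with $I$. Concretely, in an orthonormal coframe with $\omega=e^{12}+e^{34}$ one checks directly that $P$ kills $e^{12}-e^{34}$, $e^{13}+e^{24}$ and $e^{14}-e^{23}$. Hence $\sum_{k,l}\omega_{kl}\,\varepsilon^k\iota^l\nabla_v\sigma=P(\nabla_v\sigma)=0$.

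Combining the three estimates gives $(\nabla_H^*\nabla_H\sigma,\sigma)\lesssim Q(\sigma,\sigma)$, and Proposition~\ref{Lemma-1/2-estimate} then yields $\|\sigma\|_{\frac12}^2\lesssim Q(\sigma,\sigma)$, as required. The only genuinely non-routine step is the vanishing of the $\nabla_v$-term on anti-self-dual forms: this is exactly what converts the H\"ormander-type $1/2$-estimate for the rough Laplacian $\nabla_H^*\nabla_H$ into the same estimate for the non-elliptic operator $\Box_H$, and it is the structural reason why the anti-self-dual $2$-forms on $H$ are the right setting for the linearised problem, in parallel with the CR situation. I do not anticipate any trouble with the first and third terms; the bookkeeping for the curvature contribution is routine given the compactness of $M$.
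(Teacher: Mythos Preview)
Your proof is correct and follows essentially the same approach as the paper: both use the Weitzenb\"ock formula \eqref{general_Bochner_formula}, observe that the first-order $\nabla_v$-term vanishes on $\Omega^-_H$ (because $\nabla_v$ preserves $\Lambda^-_H$ and the endomorphism $\sum_{k,l}\omega_{kl}\varepsilon^k\iota^l$ annihilates it), conclude that $\Box_H$ and $\tfrac12\nabla_H^*\nabla_H$ differ only by an algebraic curvature term, and then invoke Proposition~\ref{Lemma-1/2-estimate}. Your write-up is somewhat more detailed in justifying the two ingredients of the vanishing step, but the argument is the same.
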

\begin{proof}
Because $\nabla$ is metric, $\nabla_v$ preserves $\Omega_H^-$. 
The action of $\omega$ on $\Lambda_H^-$ vanishes. Thus the first order term in \eqref{general_Bochner_formula} drops out and $\Box_H$ differs from $\frac{1}{2}\nabla_H^*\nabla_H$ only by the curvature term, which is an algebraic operator. Therefore, \eqref{1/2-estimate-Q} follows from \eqref{general-12-estimate}.
\end{proof}
\eqref{1/2-estimate-Q} and the Cauchy--Schwarz inequality imply
\begin{align}
\|\sigma\|_{\frac{1}{2}}^2 \lesssim \|\Box_H \sigma\|^2+\|\sigma\|^2.
\label{k=0}
\end{align}
From this Kohn--Nirenberg \cite[Lemma 3.1]{KohnNirenberg} derive higher order estimates:
we obtain for every $k\in\mathbb{N}$ and $\sigma\in\Omega^-_H$
\begin{align}
\|\sigma\|_{k+\frac{1}{2}}^2 
\lesssim 
\|\Box_H\sigma\|_{k}^2+\|\sigma\|^2
.
\label{higher-order-apriori-estimate}
\end{align}
An adapted proof can be found in \cite[section 3.6]{Hamilton1977Deformation2}.
We give a more detailed outline of Hamilton's proof in the next section when we discuss uniform estimates.

After establishing the higher order estimates, the method of elliptic regularization and standard arguments from functional analysis show that $\Box_H$ behaves like the standard Laplace-operator on a Riemannian manifold \cite[Theorem 4 (ii), (6.3)]{KohnNirenberg}: 
\begin{itemize}
\item 
$\Box_H$ is hypo-elliptic, i.e. if $\zeta$ is a distributional solution to the equation $\Box_H\zeta=\sigma$, where $\sigma$ is a smooth section of $\Lambda_H^-$, then $\zeta$ is smooth.
\item
$\ker \Box_H$ is finite-dimensional and there is an $L^2$-orthogonal
``Hodge''  decomposition
\begin{align}
\Omega_H^-
=
\ker \Box_H \oplus \mathrm{im}\, \Box_H.
\label{Hodge_ASD}
\end{align}
\item
The vanishing of $\ker \Box_{H}$ is an open condition. The estimate \eqref{k=0} depends only on finitely many derivatives of the contact structure and metric so it holds with a uniform constant in a neighbourhood of $\theta$ and $g$ which is open in the Fr{\'e}chet topology. Suppose we have a sequence $(\theta_{i}, g_{i})$ converging in that topology to $(\theta, g)$ and for each $i$ the $\Box$ operator defined by $(\theta_{i}, g_{i})$ has non-trivial kernel. We choose elements $\sigma_{i}$ of these kernels with $L^{2}$ norm $1$. Then, by
the compactness of the inclusion of $L^2_{1/2}$ in $L^{2}$,  we can suppose that these converge in $L^{2}$ to some non-zero limit $\sigma$ and the regularity statement above implies that $\sigma$ is smooth and lies in the kernel of $\Box_{H}$ for $(\theta, g)$. 
\end{itemize}
Define 
\begin{align}
\mathcal{H}:=\ker\Box_H=\{\sigma\in \Omega^-_H: d_H\sigma=0\}.
\label{obstruction-space}
\end{align}
The finite-dimensional vector space $\mathcal{H}$ is the obstruction space to solve equation \eqref{main_equ}. Decomposition \eqref{Hodge_ASD} gives

\begin{proposition}
\label{Prop-Fredholm-alternative}
Let $\sigma\in\Omega^-_H$. Then the equation 
\begin{align*}
d_H^-\eta=\sigma
\end{align*}
has a solution $\eta\in\Omega_H^1$ if and only if $\sigma\perp_{L^2} \mathcal{H}$. In particular, if $\mathcal{H}=0$, then $d_H^-$ is surjective onto $\Omega_H^-$ with right inverse $R=d_H^* \Box^{-1}_H$. 
\end{proposition}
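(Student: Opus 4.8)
The plan is to deduce the statement directly from the Hodge decomposition \eqref{Hodge_ASD} together with the identification $\mathcal H=\ker\Box_H$ and a short computation showing $\im\Box_H=\im d_H^-$. First I would record that $\Box_H=d_H^-d_H^*$, so trivially $\im\Box_H\subseteq\im d_H^-$. For the reverse inclusion, I would use \eqref{Hodge_ASD}: given any $\sigma=d_H^-\eta$ with $\eta\in\Omega^1_H$, write $\sigma=\sigma_0+\Box_H\xi$ with $\sigma_0\in\mathcal H$ and $\xi\in\Omega^-_H$. Pairing with $\sigma_0$ and using $(\d_H^-)^*=d_H^*$ gives $(\sigma_0,\sigma_0)=(\sigma_0,d_H^-\eta)=(d_H^*\sigma_0,\eta)$; but $\sigma_0\in\mathcal H$ means $d_H\sigma_0=0$ and hence (since $d_H^*=*d_H$ on $\Lambda^-_H$ and $*$ is an isometry) $d_H^*\sigma_0=0$, so $\sigma_0=0$. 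Thus $\sigma=\Box_H\xi\in\im\Box_H$, giving $\im d_H^-\subseteq\im\Box_H$ and therefore equality.

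Next I would prove the equivalence in the proposition. By \eqref{Hodge_ASD}, $\im\Box_H=\mathcal H^{\perp_{L^2}}$ (the $L^2$-orthogonal complement of $\ker\Box_H$, which is closed because $\mathcal H$ is finite-dimensional). Combining this with $\im d_H^-=\im\Box_H$ from the previous step, we get: $d_H^-\eta=\sigma$ is solvable for some $\eta\in\Omega^1_H$ if and only if $\sigma\in\mathcal H^{\perp_{L^2}}$, i.e. $\sigma\perp_{L^2}\mathcal H$. One point to be careful about: \eqref{Hodge_ASD} is stated for the operator $\Box_H$ acting on smooth sections after the hypo-ellipticity and Hodge theory discussion, so the decomposition already takes place in $\Omega^-_H=\Gamma(\Lambda^-_H)$, and the solution $\eta$ it produces lives in $\Omega^1_H$ because $d_H^*$ of a smooth section is smooth; no additional regularity argument is needed beyond what precedes the statement.

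Finally, for the last sentence, suppose $\mathcal H=0$. Then $\mathcal H^{\perp_{L^2}}=\Omega^-_H$, so $d_H^-$ is surjective. For the explicit right inverse, when $\mathcal H=0$ the operator $\Box_H$ is invertible (its kernel and cokernel both vanish), with inverse $\Box_H^{-1}:\Omega^-_H\to\Omega^-_H$. Setting $R:=d_H^*\Box_H^{-1}$, we compute $d_H^-R=d_H^-d_H^*\Box_H^{-1}=\Box_H\Box_H^{-1}=\mathbbm 1$ on $\Omega^-_H$, so $R$ is indeed a right inverse with image in $\Omega^1_H$.

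I do not anticipate a genuine obstacle here: the proposition is essentially a formal consequence of the Hodge-theoretic package \eqref{Hodge_ASD}–\eqref{obstruction-space} already established, and the only thing requiring a line of verification is the identity $\im d_H^-=\im\Box_H$, for which the key input is that $d_H^*$ is injective on $\mathcal H=\ker d_H|_{\Omega^-_H}$ (via $d_H^*=*d_H$ on anti-self-dual forms). The remaining content is bookkeeping with adjoints and orthogonal complements.
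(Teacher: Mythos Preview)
Your proposal is correct and follows the same route as the paper, which simply cites the Hodge decomposition \eqref{Hodge_ASD} and leaves the rest implicit; you have filled in the step $\im d_H^-=\im\Box_H$ that the paper takes for granted. One small slip in your closing summary: you wrote that the key input is that ``$d_H^*$ is injective on $\mathcal H$'', but what you actually used (and correctly proved) is that $d_H^*$ \emph{vanishes} on $\mathcal H$ via $d_H^*=*d_H$ on anti-self-dual forms.
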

In the case $\mathcal{H}=0$, \eqref{k=0}
gives the estimate
\begin{align*}
\|R\sigma\|_{-\frac{1}{2}} \lesssim \|\sigma\|.
\end{align*}
This is not optimal. By using pseudo-differential operators one can improve this estimate to
\begin{align}
\|R\sigma\|_{\frac{1}{2}} \lesssim \|\sigma\|.
\label{improved-estimate}
\end{align}

\subsection{Uniform estimates for the perturbed $d_H^-$-equation}

\label{section-uniform-estimates}

Any rank $3$ subbundle of $\Lambda_H^2$ which is positive definite with respect to the wedge product pairing and close to $\Lambda_H^+$ can be written as a graph 
\begin{align*}
\Lambda^+_{H,\mu}
=
\textrm{graph}(\mu)
=
\{ \zeta + \mu(\zeta)|\, \zeta \in \Lambda^+_H\}
\end{align*}
of a map
\begin{align*}
\mu: \Lambda^+_H \rightarrow \Lambda^-_H.
\end{align*}
The deformation of the bundle of anti-self-dual forms is then the graph of $\mu^*$: 
\begin{align*}
\Lambda^-_{H,\mu}=
\textrm{graph}(\mu^*)
=
\{ \zeta + \mu^*(\zeta)|\, \zeta \in \Lambda^-_H\}.
\end{align*}
Indeed, if $\sigma\in \Lambda^+_H$ and $\zeta\in\Lambda^-_H$, then
\begin{align*}
(\sigma+\mu(\sigma))\wedge(\zeta+\mu^*(\zeta)
)
=
\mu(\sigma)\wedge\zeta + \sigma\wedge \mu^*(\zeta)
=
\{
-\langle\mu(\sigma),\zeta\rangle + \langle \sigma, \mu^*(\zeta)\rangle
\}\Vol
=0.
\end{align*}
This shows $\mathrm{graph}(\mu^*)\subset (\Lambda^+_{H,\mu})^{\perp_\mu} = \Lambda^-_{H,\mu}$. Equality follows because the dimensions are equal.
The projection
$\pi_+: \Lambda^+_{H,\mu}\rightarrow \Lambda^+_H$ is an isomorphism.
Indeed, for $\zeta+\mu(\zeta)$ we have $\pi_+(\zeta+\mu(\zeta))= \zeta$.
Therefore, $\pi_+$ is injective and thus an isomorphism.
Similarly $\pi_-: \Lambda^+_{H,\mu}\rightarrow \Lambda^+_H$ is an isomorphism.
To sum up, we have bundle isomorphisms
\begin{equation}
\label{bundle-isos}
\begin{tikzcd}
\Lambda_H^+ \ar[r,"{\mathbbm{1}+\mu}",shift left] & \Lambda_{H,\mu}^+ \ar[l,"{\pi_+}",shift left],
&
\hspace{10pt}
&
\Lambda_H^- \ar[r,"{\mathbbm{1}+\mu^*}",shift left] & \Lambda_{H,\mu}^- \ar[l,"{\pi_-}",shift left].
\end{tikzcd}
\end{equation}
By keeping the volume form fixed, $\Lambda_{H,\mu}^+$ defines a new metric $\langle \cdot, \cdot \rangle_{\mu}$
on $H$ and its associated bundles, whose induced $L^2$-inner product we denote by $(\cdot, \cdot)_{\mu}$. Via the isomorphism $\mathbbm{1}+\mu^*$ we can pull-back $\langle \cdot, \cdot \rangle_{\mu}$ to a metric $\dlangle \cdot, \cdot \drangle_{\mu}$ on $\Lambda_H^-$ with induced $L^2$-inner product $\dlpara\cdot, \cdot \drpara_{\mu}$. An explicit calculation shows that on $\Lambda_H^-$ the inner products are related by $\dlangle\, \cdot\, ,\, \cdot\, \drangle_{\mu} = \langle \, \cdot\, , (\mathbbm{1}-\mu\mu^*)\,\cdot\, \rangle$. 

The decomposition $\Lambda_H^2=\Lambda_{H,\mu}^+\oplus \Lambda_{H,\mu}^-$
gives rise to a perturbation $d_{H,\mu}^-$ of $d_H^-$ and a corresponding perturbation of equation \eqref{main_equ}.
We want to view differential operators arising from the perturbed decomposition of $\Lambda_H^2$ as operators acting between sections of fixed bundles. 
The isomorphisms \eqref{bundle-isos} allow us to identify $d_{H,\mu}^-$ with the operator $D_{\mu}:= \pi_-\circ d_{H,\mu}^-: \Gamma(\Lambda^1_H)\rightarrow \Gamma(\Lambda_H^-)$. 
With respect to $(\cdot ,\cdot)_{\mu}$ we have an adjoint $(d_{H,\mu}^-)^{*_\mu}$, which by Lemma \ref{Lemma-adjoint} coincides with $d_H^{*_{\mu}}$.
Set $D_{\mu}^*:= (d_{H,\mu}^-)^{*_{\mu}} \circ (\mathbbm{1}+\mu^*) : \Gamma(\Lambda_H^-)\rightarrow \Gamma(\Lambda_H^1)$. This notation is justified as we have $\dlpara D_{\mu} \eta, \sigma\drpara_{\mu} = (\eta, D_{\mu}^* \sigma )_{\mu}$ for $\eta\in\Gamma(\Lambda_H^1)$ and $\sigma\in\Gamma(\Lambda_H^-)$. The relevant second order operator is $E_{\mu}:= D_{\mu} \circ D_{\mu}^* = \pi_- \circ d_{H,\mu}^- \circ (d_{H,\mu}^-)^{*_{\mu}} \circ (\mathbbm{1}+\mu^*)
=\pi_- \circ \Box_{H,\mu} \circ (\mathbbm{1}+\mu^*)
: \Gamma(\Lambda_H^-)\rightarrow \Gamma(\Lambda_H^-)$.

The operator $E_{\mu}$ enjoys the same analytic properties as the operator $\Box_H$. In particular $E_{\mu}$ is sub-elliptic and at the end of the previous section we have seen that $\ker\Box_H=0$ implies that  $\ker E_{\mu}$ vanishes if $\mu$ is sufficiently small. 
To apply the Nash--Moser implicit function theorem to the perturbative embedding problem, we need to carefully check how the higher order estimates \eqref{higher-order-apriori-estimate} depend on the parameter $\mu$. For example an estimate of the form
\begin{align*}
\|\sigma\|_{k+\frac{1}{2}} \lesssim
\|E_{\mu}\sigma\|_{k}+([[\mu]]_{2k}+1) \|\sigma\|_2
\end{align*}
would not be enough \cite[Counterexample I.5.5.4]{HamiltonIFT}.

We start by deriving an uniform version of the ``1/2''-estimate \eqref{1/2-estimate-Q}. Analogously to \eqref{Q} define a quadratic form
\begin{align*}
Q_{\mu}(\sigma,\sigma)
=
\dlpara (E_{\mu}+\mathbbm{1})\sigma,\sigma \drpara_{\mu}
=
( D_{\mu}^* \sigma, D_{\mu}^*\sigma )_{\mu}
+ \dlpara \sigma,\sigma \drpara_{\mu}.
\end{align*}
The analogue of formula \eqref{d-identity-Q} is 
\begin{align*}
Q_{\mu}(\sigma,\sigma)
=
(d_H (\mathbbm{1}+\mu^*)\sigma,d_H (\mathbbm{1}+\mu^*)\sigma)_{\mu}
+
(\sigma,\sigma)_{\mu}.
\end{align*}
In the following we will assume a $\mathcal{C}^0$-bound on $\mu$. This then implies that we have a uniform equivalence of $L^2$-products
\begin{align*}
(\cdot, \cdot) \lesssim (\cdot, \cdot)_{\mu} \lesssim (\cdot,\cdot).
\end{align*}

\begin{proposition}
Under a $\mathcal{C}^1$-bound on $\mu$ for all $\sigma\in\Omega_H^-$ we have 
\begin{align}
\|\sigma\|_{\frac{1}{2}}^2
\lesssim
Q_{\mu}(\sigma,\sigma).
\label{uniform-1/2-estimate}
\end{align}
\end{proposition}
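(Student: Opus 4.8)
The plan is to reduce the uniform estimate \eqref{uniform-1/2-estimate} to the unperturbed estimate \eqref{1/2-estimate-Q} by comparing $Q_\mu$ with the original quadratic form $Q$, keeping careful track of how the error terms depend on $\mu$ and its first derivative. First I would use the identity
\begin{align*}
Q_{\mu}(\sigma,\sigma)
=
(d_H (\mathbbm{1}+\mu^*)\sigma,d_H (\mathbbm{1}+\mu^*)\sigma)_{\mu}
+
(\sigma,\sigma)_{\mu}
\end{align*}
together with the $\mathcal{C}^0$-bound on $\mu$, which gives the uniform equivalence $(\cdot,\cdot)\lesssim(\cdot,\cdot)_\mu\lesssim(\cdot,\cdot)$, to replace $(\cdot,\cdot)_\mu$ by $(\cdot,\cdot)$ at the cost of a constant depending only on $[[\mu]]_0$. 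This reduces the problem to bounding $\|\sigma\|_{1/2}^2$ by $\|d_H(\mathbbm{1}+\mu^*)\sigma\|^2 + \|\sigma\|^2$ with a constant depending only on $[[\mu]]_1$.

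Next I would set $\tau := (\mathbbm{1}+\mu^*)\sigma \in \Omega_H^2$ and expand $d_H\tau = d_H\sigma + d_H(\mu^*\sigma)$. The point is that $d_H(\mu^*\sigma) = (d_H\mu^*)\wedge\sigma + (\text{algebraic in }\mu^*)\,d_H\sigma$ contains only first derivatives of $\mu$ and no derivatives of $\sigma$ of order higher than those already in $d_H\sigma$ — more precisely, in a local adapted orthonormal frame, $\nabla_i(\mu^*\sigma) = (\nabla_i\mu^*)\sigma + \mu^*(\nabla_i\sigma)$, so $\|d_H(\mu^*\sigma)\| \lesssim [[\mu]]_1\,\|\sigma\| + [[\mu]]_0\,\|\nabla_H\sigma\|$. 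Using \eqref{d-identity-Q}, $\|\nabla_H\sigma\|^2 \lesssim \|d_H\sigma\|^2 + \|d_H^*\sigma\|^2 + \|\sigma\|^2$; but on $\Omega_H^-$ we have $d_H^*\sigma = *d_H\sigma$ and $*$ is an isometry, so in fact $\|\nabla_H\sigma\| \lesssim \|d_H\sigma\| + \|\sigma\|$. Hence by the triangle inequality $\|d_H\sigma\| \lesssim \|d_H\tau\| + \|d_H(\mu^*\sigma)\| \lesssim \|d_H\tau\| + [[\mu]]_1(\|d_H\sigma\| + \|\sigma\|)$, and absorbing the $\|d_H\sigma\|$ term on the left (legitimate once $[[\mu]]_1$ is small enough, which we may assume since we only need the estimate for $\mu$ near $0$) gives $\|d_H\sigma\|^2 + \|\sigma\|^2 \lesssim Q_\mu(\sigma,\sigma)$ with a constant depending only on $[[\mu]]_1$.

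Finally, with $\|d_H\sigma\|^2 + \|\sigma\|^2$ controlled by $Q_\mu(\sigma,\sigma)$, I would invoke the unperturbed sub-elliptic estimate: by \eqref{d-identity-Q} the left side here is exactly $Q(\sigma,\sigma)$, and \eqref{1/2-estimate-Q} gives $\|\sigma\|_{1/2}^2 \lesssim Q(\sigma,\sigma)$. Chaining the two inequalities yields $\|\sigma\|_{1/2}^2 \lesssim Q_\mu(\sigma,\sigma)$ with a constant depending only on the $\mathcal{C}^1$-norm of $\mu$ (and on finitely many derivatives of the fixed background contact structure and metric), which is precisely \eqref{uniform-1/2-estimate}.

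The main obstacle is the bookkeeping in the middle step: one must make sure that expanding $d_H(\mu^*\sigma)$ produces no term involving more than one derivative of $\mu$ and no term involving derivatives of $\sigma$ that are not already absorbable into $\|d_H\sigma\| + \|\sigma\|$, and then verify that the self-duality trick $d_H^*\sigma = *d_H\sigma$ on $\Omega_H^-$ really does let one control the full covariant derivative $\nabla_H\sigma$ by $d_H\sigma$ alone (rather than by both $d_H\sigma$ and $d_H^*\sigma$). This is what keeps the constant depending only on $[[\mu]]_1$ rather than on higher norms of $\mu$, which is exactly the feature needed for the Nash--Moser scheme; the analogous care at higher order is the subject of the estimates that follow.
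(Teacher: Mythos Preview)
Your proposal is correct and follows essentially the same approach as the paper: reduce $(\cdot,\cdot)_\mu$ to $(\cdot,\cdot)$ via the $\mathcal{C}^0$-bound, expand $d_H(\mu^*\sigma)$ using the adapted connection to produce terms controlled by $[[\mu]]_1(\|\nabla_H\sigma\|+\|\sigma\|)$, bound $\|\nabla_H\sigma\|$ by $\sqrt{Q(\sigma,\sigma)}$, absorb for small $[[\mu]]_1$, and apply the unperturbed estimate \eqref{1/2-estimate-Q}. One small correction: your bound $\|\nabla_H\sigma\|^2 \lesssim \|d_H\sigma\|^2 + \|d_H^*\sigma\|^2 + \|\sigma\|^2$ does not follow from \eqref{d-identity-Q} but from the Weitzenb\"ock formula (Lemma~\ref{lemma-general_Weitzenbock} and the observation in the proof of Proposition~\ref{Proposition-1/2-estimate-Q} that on $\Omega_H^-$ the first-order term drops, so $\Box_H - \tfrac{1}{2}\nabla_H^*\nabla_H$ is algebraic); this is exactly what the paper invokes at that step.
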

\begin{proof}
We have
\begin{align*}
Q_{\mu}(\sigma,\sigma)
&=
(d_H (\mathbbm{1}+\mu^*)\sigma,d_H (\mathbbm{1}+\mu^*)\sigma)_{\mu}
+
(\sigma,\sigma)_{\mu}
\\
&\gtrsim
(d_H (\mathbbm{1}+\mu^*)\sigma,d_H (\mathbbm{1}+\mu^*)\sigma)
+
(\sigma,\sigma)
\\
&=
Q_0(\sigma,\sigma) + 2(d_H\sigma,d_H \mu^*\sigma) + \|d_H \mu^* \sigma\|^2
\\
&\geq 
Q_0(\sigma,\sigma)
-2 \|d_H\sigma\| \|d_H\mu^*\sigma\|.
\end{align*}
By Lemma \ref{lemma-connection} (ii) we have
\begin{align*}
d_H\mu^* \sigma = \sum_{i=1}^4 e^i\wedge \nabla_i(\mu^*\sigma)
=
\sum_{i=1}^4 e^i\wedge (\nabla_i\mu^*)\sigma
-
\sum_{i=1}^4 e^i\wedge \mu^*\nabla_i\sigma.
\end{align*}
Thus with Proposition \ref{Proposition-1/2-estimate-Q}
and formula \eqref{d-identity-Q}
\begin{align*}
\|d_H \mu^* \sigma \|
&\lesssim [[\mu]]_1 \|\sigma\| + [[\mu]]_{0} \|\nabla_H \sigma\|
\lesssim [[\mu]]_1 (\|\nabla_H\sigma\|+\|\sigma\|)
\\
&\lesssim [[\mu]]_1 \sqrt{Q(\sigma,\sigma)}
\lesssim
[[\mu]]_1 (\|d_H \sigma\|+\|\sigma\|).
\end{align*}
If we denote the constant in the last inequality by $C$, with \eqref{d-identity-Q} this gives
\begin{align*}
Q_{\mu}(\sigma,\sigma) 
&\gtrsim Q_0(\sigma,\sigma)-2C[[\mu]]_1 \|d_H \sigma\| (\|d_H\sigma\|+\|\sigma\|)
\\
&\gtrsim 
Q_0(\sigma,\sigma) -3C
 [[\mu]]_1 (\|d_H\sigma\|^2 + \|\sigma\|^2)
\\
&\gtrsim
Q_0(\sigma,\sigma)
- 3C [[\mu]]_1 Q_0(\sigma,\sigma).
\end{align*}
Then if $[[\mu]]_1 < \frac{1}{6C}$ we get with \eqref{1/2-estimate-Q} 
\begin{align*}
\|\sigma\|_{\frac{1}{2}}^2 \lesssim Q_0(\sigma,\sigma) \lesssim Q_{\mu}(\sigma,\sigma).
\end{align*}
\end{proof}

Next we explain how to obtain uniform higher order estimates from the uniform ``1/2-estimate''. The proof is due to Hamilton \cite{Hamilton1977Deformation2}.

\begin{proposition}
For all $\mu$ in a sufficiently small neighbourhood of $0$, there is an estimate
\begin{align}
\|\sigma\|_{k+\frac{1}{2}} \lesssim
\|E_{\mu}\sigma\|_{k}+([[\mu]]_{k+2}+1) \|\sigma\|_2
.
\label{uniform_estimate_E}
\end{align}
for each $k\in\mathbb{N}$.
\end{proposition}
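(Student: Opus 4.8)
The plan is to prove \eqref{uniform_estimate_E} by induction on $k$, using the uniform half-estimate \eqref{uniform-1/2-estimate} as the base case and then repeating, step by step, the bootstrapping that produces the unperturbed estimate \eqref{higher-order-apriori-estimate} (Kohn--Nirenberg \cite[Lemma~3.1]{KohnNirenberg}, in the form given by Hamilton \cite[\S3.6]{Hamilton1977Deformation2}), while recording at each step how the implied constants depend on $\mu$. For $k=0$ the assertion is in fact weaker than what \eqref{uniform-1/2-estimate}, Cauchy--Schwarz and the $\mathcal{C}^0$-equivalence of the $L^2$-products give: since $E_\mu$ is self-adjoint for $\dlpara\,\cdot\,,\,\cdot\,\drpara_\mu$, one has $\|\sigma\|_{\frac12}^2\lesssim Q_\mu(\sigma,\sigma)=\dlpara(E_\mu+\mathbbm{1})\sigma,\sigma\drpara_\mu\lesssim\|E_\mu\sigma\|^2+\|\sigma\|^2$.

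For the inductive step, fix once and for all an invertible elliptic pseudo-differential operator $P=P_k$ of order $k$ on $M$ acting on sections of $\Lambda^-_H$; such a $P$ exists because $M$ is closed, and it gives two-sided bounds $\|\phi\|_s\lesssim\|P\phi\|_{s-k}$ and $\|P\phi\|_{s-k}\lesssim\|\phi\|_s$ for every $s$. Applying \eqref{uniform-1/2-estimate} to $P\sigma$ gives
\[
\|\sigma\|_{k+\frac12}^2\ \lesssim\ \|P\sigma\|_{\frac12}^2\ \lesssim\ Q_\mu(P\sigma,P\sigma)\ =\ \dlpara(E_\mu+\mathbbm{1})P\sigma,P\sigma\drpara_\mu .
\]
Writing $(E_\mu+\mathbbm{1})P\sigma=P(E_\mu+\mathbbm{1})\sigma+[E_\mu,P]\sigma$, the term in which $P$ has been commuted through contributes, by Cauchy--Schwarz and the equivalence of the $L^2$-products, at most $\|(E_\mu+\mathbbm{1})\sigma\|_k\|\sigma\|_k\lesssim(\|E_\mu\sigma\|_k+\|\sigma\|_k)\|\sigma\|_k$, which, after interpolating $\|\sigma\|_k$ between $\|\sigma\|_{k+\frac12}$ and $\|\sigma\|_2$ and absorbing the top-order term on the left, is bounded by $\|E_\mu\sigma\|_k^2+\|\sigma\|_2^2$. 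Everything therefore reduces to bounding the commutator term $\dlpara[E_\mu,P]\sigma,P\sigma\drpara_\mu$.

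To treat this I would split $E_\mu=E_0+L_\mu$, where $E_0=\pi_-\Box_H$ is the unperturbed operator (with principal part $\tfrac12\nabla_H^*\nabla_H$, cf.\ \eqref{general_Bochner_formula}) and $L_\mu$ is a second-order differential operator whose coefficients vanish at $\mu=0$ and are smooth functions of $\mu$ and of its first derivatives, the zeroth-order coefficients involving in addition the second derivatives of $\mu$. The contribution of $[E_0,P]$ is disposed of \emph{verbatim} as in the proof of \eqref{higher-order-apriori-estimate}: although $[E_0,P]$ has order $k+1$, the fact that the principal symbol of $E_0$ involves only the horizontal momenta is precisely the structure exploited in the subelliptic estimate, and this part of the commutator is absorbed on the left up to fixed-geometry constants and $\sigma$-norms of order $\le k$, which are interpolated against $\|\sigma\|_2$. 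Since $L_\mu$ has the same principal-symbol structure, only with coefficients algebraic in $\mu$, the top-order part of $[L_\mu,P]$ is absorbed in the same way, now with a coefficient $\lesssim[[\mu]]_1$, which is genuinely absorbable once $\mu$ is small. What survives are the lower-order pieces of $[E_\mu,P]\sigma$ paired with $P\sigma$: these are finite sums of products of derivatives of the $\mu$-dependent coefficients of $E_\mu$ --- hence of at most $k+2$ derivatives of $\mu$ --- with at most two derivatives of $\sigma$, distributed over the two factors in all possible ways. Applying repeatedly the standard tame product and interpolation inequalities for Sobolev and $\mathcal{C}^k$-norms, followed by Young's inequality, each such term is bounded by $\varepsilon\|\sigma\|_{k+\frac12}^2+C_\varepsilon([[\mu]]_{k+2}+1)^2\|\sigma\|_2^2$, the $\varepsilon$-term being absorbed on the left.

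The step I expect to be the real obstacle is not any individual estimate but the global accounting of derivatives: one must ensure that no factor $[[\mu]]_j$ with $j>k+2$ is ever produced. A tame estimate with $[[\mu]]_{2k}$ in place of $[[\mu]]_{k+2}$ would be useless for the Nash--Moser scheme (cf.\ \cite[Counterexample~I.5.5.4]{HamiltonIFT}), so the argument must commute the single operator $P$ past $E_\mu$ exactly once, never differentiate a coefficient of $E_\mu$ more than $k$ further times, and use interpolation to convert any term carrying a high Sobolev norm of $\sigma$ together with a low $\mathcal{C}^k$-norm of $\mu$ into an absorbable term plus a multiple of $([[\mu]]_{k+2}+1)\|\sigma\|_2$. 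Finally, the uniformity of all implied constants as $\mu$ ranges over a neighbourhood of $0$ is built into the set-up, exactly as in the discussion following \eqref{Hodge_ASD}: every inequality used depends only on finitely many derivatives of the fixed contact structure and metric on $M$ and on the norms $[[\mu]]_j$ through the controlled dependence just described, and the smallness of $[[\mu]]_1$ needed for the absorptions holds on an entire $\mathcal{C}^\infty$-neighbourhood of $0$.
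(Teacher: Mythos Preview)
Your overall strategy---induction on $k$, with the uniform half-estimate \eqref{uniform-1/2-estimate} applied to a $k$-th order derivative of $\sigma$---is the same as the paper's. The base case $k=0$ is identical. The difference lies in the inductive step, and there the paper's execution is sharper in a way that matters for the tame bound.

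The paper does \emph{not} use a single elliptic pseudodifferential operator $P$ of order $k$. Instead it invokes Hamilton's device \cite[p.~420]{Hamilton1977Deformation2}: choose finitely many vector fields $X_1,\dots,X_N$ so that $\|\sigma\|_{k+\frac12}^2\le\sum_{l,j}\|\nabla_{X_j}^l\sigma\|_{\frac12}^2$, and apply the half-estimate to each $\nabla^k\sigma:=\nabla_{X_j}^k\sigma$ separately. Then one compares $Q_\mu(\nabla^k\sigma,\nabla^k\sigma)$ with $(-1)^kQ_\mu(\sigma,\nabla^{2k}\sigma)$ by repeated integration by parts through the \emph{first-order} operator $D_\mu^*$ appearing in $Q_\mu$, and invokes Hamilton's ``uniform Kohn--Nirenberg Lemma'' \cite[p.~448]{Hamilton1977Deformation2} to bound the discrepancy by $\|\sigma\|_k^2+[[\mu]]_{k+2}^2\|\sigma\|_1^2$. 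This error is genuinely of order $k$ (not $k+\tfrac12$), so after interpolation it becomes $(sc)\|\sigma\|_{k+\frac12}^2+(lc)\|\sigma\|_{k-\frac12}^2$ and the small-constant piece is absorbed. The term $Q_\mu(\sigma,\nabla^{2k}\sigma)$ itself is handled by writing $\nabla_\mu^*=-\nabla+a(\mu)$ and again applying a tame product estimate.

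Your route instead writes $Q_\mu(P\sigma,P\sigma)=\dlpara(E_\mu+\mathbbm{1})P\sigma,P\sigma\drpara_\mu$ and commutes the \emph{second-order} operator $E_\mu$ past $P$. The resulting commutator $[E_\mu,P]$ has order $k+1$, and the pairing $\dlpara[E_\mu,P]\sigma,P\sigma\drpara_\mu$ has total order $2k+1$: the same as $\|\sigma\|_{k+\frac12}^2$, not lower. Your claim that the top-order piece ``is absorbed on the left'' because the principal symbol of $E_0$ is horizontal is where the argument becomes vague. The symbol of $[E_0,P]$ is a Poisson bracket and does not inherit the horizontal degeneracy of $E_0$; there is no reason the implied constant is small. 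To make this work one has to factor the commutator through $\nabla_H$ (writing $[\nabla_H^*\nabla_H,P]=\nabla_H^*[\nabla_H,P]+[\nabla_H^*,P]\nabla_H$) and then bound terms like $\|\nabla_HP\sigma\|$ by $Q_\mu(P\sigma,P\sigma)^{1/2}$ itself---i.e., one must return to the first-order, quadratic-form level that the paper works at from the start. This is doable, but it is real additional work you have not written, and verifying that the $\mu$-dependence stays at $[[\mu]]_{k+2}$ (not $[[\mu]]_{k+3}$ or worse) through this factoring requires exactly the kind of tame product lemma that Hamilton packages as the uniform Kohn--Nirenberg Lemma. The paper's choice of differential operators $\nabla^k$ rather than a $\Psi$DO, and its manipulation at the level of $Q_\mu$ rather than $E_\mu$, are what make this bookkeeping clean.
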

\begin{proof}
We use induction on $k$. For $k=0$ the estimate follows from \eqref{uniform-1/2-estimate} and the Cauchy--Schwarz inequality:
\begin{align*}
\|\sigma\|^2_{\frac{1}{2}}
\lesssim 
(E_{\mu} \sigma,\sigma) + \|\sigma\|^2
\lesssim
\|E_{\mu}\sigma\|^2+\|\sigma\|^2.
\end{align*}
In the derivation of the higher order estimates we want to avoid dealing with mixed partial derivatives. We use the following trick from \cite[p.420]{Hamilton1977Deformation2}:
for each $k\in\mathbb{N}$ there exist $N$ (which is allowed to depend on $k$ and can be large) vector fields $X_1, \dots, X_N$ 
such that 
\begin{align*}
\|\sigma\|^2_{k+\frac{1}{2}} \leq \sum_{l=0}^k \sum_{j=1}^N \|\nabla_{X_j}^l \sigma\|^2_{\frac{1}{2}}.
\end{align*}
On $\mathbb{R}^n$ this follows from the statement from algebra that each homogeneous polynomial $q(\underline{\partial})$ of degree $k$ depending on $n$ formal variables $\underline{\partial}=(\partial_1, \dots, \partial_n)$ can be written as a linear combination of the $k$-th powers of linear polynomials $p_1(\underline{\partial}), \dots, p_N(\underline{\partial})$.
E.g. we have 
\begin{align*}
6\partial_1\partial_2^2
=
\partial_1^3-6\partial_2^3 -2 (\partial_1+\partial_2)^3+(\partial_1+2\partial_2)^3.
\end{align*} 
On the manifold $M$ we can find the vector fields $X_j, j=1, \dots, N$, by choosing a partition of unity to reduce to the local case.
In the following fix $j$ and just write $\nabla$ for $\nabla_{X_j}$.
Inserting $\nabla^k\sigma$ into \eqref{uniform-1/2-estimate} gives
\begin{align}
\|\nabla^k\sigma\|^2_{\frac{1}{2}}
\lesssim
Q_{\mu}(\nabla^k\sigma,\nabla^k\sigma)
\lesssim
|Q_{\mu}(\sigma,\nabla^{2k}\sigma)|
+
|Q_{\mu}(\nabla^k\sigma,\nabla^k\sigma)
-(-1)^k
Q_{\mu}(\sigma,\nabla^{2k}\sigma)|.
\label{higher-order-estimate}
\end{align}
Because $Q_{\mu}$ depends on $\mu$ and its first derivative, 
by the ``uniform Kohn--Nirenberg Lemma'' \cite[p.448]{Hamilton1977Deformation2} we can estimate the error term as
\begin{gather}
|Q_{\mu}(\nabla^k\sigma,\nabla^k\sigma)
-(-1)^k
Q_{\mu}(\sigma,\nabla^{2k}\sigma)|
\nonumber
\\
\lesssim
\|\sigma\|_k^2
+[[\mu]]_{k+2}^2 \|\sigma\|_1^2
\lesssim
(sc)\|\sigma\|_{k+\frac{1}{2}}^2+(lc)\|\sigma\|_{k-\frac{1}{2}}^2
+[[\mu]]_{k+2}^2 \|\sigma\|_1^2
,
\label{estimate-errorterm}
\end{gather}
where we can make the ``small'' constant (sc) arbitrarily small by choosing the ``large'' constant (lc) sufficiently large.
The adjoint $\nabla_{\mu}^*$ of $\nabla$ with respect to $\dlpara \cdot,\cdot \drpara_{\mu}$ has the form
$\nabla_{\mu}^*=-\nabla+a(\mu)$,
where $a(\mu)$ is a linear partial differential operator of degree $0$ whose coefficients depend on $\mu$ and its first derivative.
Thus with \cite[Lemma on bottom of p.448]{Hamilton1977Deformation2}
we get
\begin{align*}
|Q_{\mu}(\sigma,\nabla^{2k}\sigma)|
&=
|\dlpara(E_{\mu}+\mathbbm{1})\sigma,\nabla^{2k}\sigma\drpara_{\mu}|
=
|\dlpara{\nabla_{\mu}^*}^k (E_{\mu}+\mathbbm{1})\sigma,\nabla^k\sigma\drpara_{\mu}|
\\
&\lesssim
\|{\nabla_{\mu}^*}^k (E_{\mu}+\mathbbm{1})\sigma\|^2 + \|\sigma\|_k^2
\\
&
\lesssim
\|(E_{\mu}+\mathbbm{1})\sigma\|^2_{k}
+[[\mu]]_k^2 \|(E_{\mu}+\mathbbm{1})\sigma\|^2+\|\sigma\|_k^2
\\
&\lesssim
\|E_{\mu}\sigma\|_k^2 + [[\mu]]_k^2 \|\sigma\|^2 + [[\mu]]_k^2 \|E_\mu \sigma\|^2 + \|\sigma\|_k^2.
\end{align*}
Under a $\mathcal{C}^2$-bound on $\mu$ we have $\|E_{\mu}\sigma\|\lesssim \|\sigma\|_2$. Using this, we then get
\begin{align}
|Q_{\mu}(\sigma,\nabla^{2k}\sigma)|
\lesssim
\|E_{\mu}\sigma\|_k^2 + [[\mu]]_k^2 \|\sigma\|_2^2 + (sc)\|\sigma\|_{k+\frac{1}{2}}^2+(lc)\|\sigma\|_{k-\frac{1}{2}}^2.
\label{estimate-integrationbyparts}
\end{align}
Combining \eqref{higher-order-estimate}, \eqref{estimate-errorterm}, \eqref{estimate-integrationbyparts}
gives
\begin{align*}
\|\nabla^k\sigma\|_{\frac{1}{2}}^2
\lesssim
\|E_{\mu}\sigma\|_k^2
+ [[\mu]]_{k+2}^2 \|\sigma\|_2^2 + (sc)\|\sigma\|_{k+\frac{1}{2}}^2+(lc)\|\sigma\|_{k-\frac{1}{2}}^2.
\end{align*}
Summing over all vector fields $X_j$, rearranging and using the induction hypothesis gives the desired estimate.
\end{proof}

To derive uniform estimates for the right inverse $R_{\mu}$ of $D_{\mu}$ we will use the ``second Moser estimate'' \cite[p. 439]{Hamilton1977Deformation2}: If the operator $L(m)f$ depends on derivatives of $m$ up to order $r$, possibly in a non-linear way, and is linear and of order $s$ in $f$, then under a $\mathcal{C}^r$-bound on $m$ we have a uniform estimate
\begin{align}
\|L(m)f\|_k \lesssim \|f\|_{s+k} + [[m]]_{k+r} \|f\|_s.
\label{2nd-Moser_estimate}
\end{align}

\begin{corollary}
\label{corollary-uniform_estimate_D}
There exists $l\in\mathbb{N}$, such that for all $\mu$ in a sufficiently small neighbourhood of $0$ the right inverse $R_{\mu}:= D_{\mu}^* E_{\mu}^{-1}$ of $D_{\mu}$ satisfies an estimate
\begin{align}
\|R_{\mu}\sigma\|_{k} \lesssim \|\sigma\|_{k+1}+([[\mu]]_{k+3}+1)\|\sigma\|_{l}
\label{uniform-right-inv-estimate}
\end{align}
for every $k\in\mathbb{N}$.
\end{corollary}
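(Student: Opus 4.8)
The plan is to bootstrap the uniform sub-elliptic estimate \eqref{uniform_estimate_E} for $E_{\mu}$ by combining it with a uniform $L^{2}$-bound for $E_{\mu}^{-1}$ and with the second Moser estimate \eqref{2nd-Moser_estimate} applied to the first-order operator $D_{\mu}^{*}$. Throughout we work, as in this section, under the standing assumption $\mathcal{H}=0$, so that $E_{\mu}$ is invertible for $\mu$ in a fixed small neighbourhood of $0$ (on which \eqref{uniform-1/2-estimate} and \eqref{uniform_estimate_E} hold), and we write $\tau:=E_{\mu}^{-1}\sigma$.

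\emph{Step 1: a uniform bound $\|\tau\|\lesssim\|\sigma\|$.} This is a compactness argument of exactly the type used at the end of Section \ref{section-subelliptic}: if no such uniform bound existed we could find $\mu_{i}\to 0$ and $\tau_{i}$ with $\|\tau_{i}\|=1$ and $\|E_{\mu_{i}}\tau_{i}\|\to 0$; then $Q_{\mu_{i}}(\tau_{i},\tau_{i})=\dlpara E_{\mu_{i}}\tau_{i},\tau_{i}\drpara_{\mu_{i}}+\dlpara\tau_{i},\tau_{i}\drpara_{\mu_{i}}$ stays bounded, so \eqref{uniform-1/2-estimate} bounds $\{\tau_{i}\}$ in $L^{2}_{1/2}$; after passing to a subsequence $\tau_{i}\to\tau_{\infty}$ in $L^{2}$ with $\|\tau_{\infty}\|=1$, and passing to the limit in $E_{\mu_{i}}\tau_{i}\to 0$ (the coefficients of $E_{\mu_{i}}$ converging to those of $\Box_{H}$) gives $\Box_{H}\tau_{\infty}=0$, i.e. $\tau_{\infty}\in\mathcal{H}=0$, a contradiction. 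Alternatively one gets the bound directly from coercivity: the spectral gap of $\Box_{H}$ together with the perturbation estimate for $d_{H}\mu^{*}\sigma$ used in the proof of \eqref{uniform-1/2-estimate} shows $Q_{\mu}(\sigma,\sigma)-\dlpara\sigma,\sigma\drpara_{\mu}\gtrsim\|\sigma\|^{2}$ for $\mu$ small.

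\emph{Step 2: pinning down $\|\tau\|_{2}$ and $\|\tau\|_{k+1}$.} Applying \eqref{uniform_estimate_E} with $k=2$ gives $\|\tau\|_{5/2}\lesssim\|\sigma\|_{2}+([[\mu]]_{4}+1)\|\tau\|_{2}$. Since $[[\mu]]_{4}$ is bounded on the chosen neighbourhood, the coefficient of $\|\tau\|_{2}$ is bounded independently of $\mu$, so the Sobolev interpolation inequality $\|\tau\|_{2}\le\varepsilon\|\tau\|_{5/2}+C_{\varepsilon}\|\tau\|$ with $\varepsilon$ a \emph{fixed} small number lets us absorb $\|\tau\|_{5/2}$; with Step 1 we get $\|\tau\|_{2}\le\|\tau\|_{5/2}\lesssim\|\sigma\|_{2}+\|\tau\|\lesssim\|\sigma\|_{2}$. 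Feeding this back into \eqref{uniform_estimate_E} at level $k+1$ yields, for every $k$,
\[ \|\tau\|_{k+1}\le\|\tau\|_{k+3/2}\lesssim\|\sigma\|_{k+1}+([[\mu]]_{k+3}+1)\|\sigma\|_{2}. \]

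\emph{Step 3: applying $D_{\mu}^{*}$.} Since $D_{\mu}^{*}=d_{H}^{*_{\mu}}\circ(\mathbbm{1}+\mu^{*})$ is linear and of first order in its argument, with coefficients depending on $\mu$ and its first derivatives, the second Moser estimate \eqref{2nd-Moser_estimate} (with $s=1$, $r=1$) gives $\|R_{\mu}\sigma\|_{k}=\|D_{\mu}^{*}\tau\|_{k}\lesssim\|\tau\|_{k+1}+[[\mu]]_{k+1}\|\tau\|_{1}$. Inserting the bound on $\|\tau\|_{k+1}$ from Step 2, using $\|\tau\|_{1}\le\|\tau\|_{2}\lesssim\|\sigma\|_{2}$, and absorbing $[[\mu]]_{k+1}\le[[\mu]]_{k+3}$ produces \eqref{uniform-right-inv-estimate} with $l=2$. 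The only genuinely delicate point is the uniform invertibility of $E_{\mu}$ on $L^{2}$ in Step 1, together with the bookkeeping in Step 2 which keeps $[[\mu]]_{k+3}$ appearing only linearly and multiplying a \emph{fixed}-order Sobolev norm of $\sigma$; the interpolation trick there works precisely because the neighbourhood of $0$ controls finitely many derivatives of $\mu$, in particular $[[\mu]]_{4}$.
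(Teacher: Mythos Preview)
Your proof is correct and follows the same overall architecture as the paper: feed the uniform higher-order estimate \eqref{uniform_estimate_E} with a uniform low-norm control on $E_{\mu}^{-1}$, then apply the second Moser estimate \eqref{2nd-Moser_estimate} to the first-order operator $D_{\mu}^{*}$.

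The only genuine difference is in how you obtain the low-norm control. The paper quotes Hamilton's contradiction lemma directly at level~$2$, i.e.\ it obtains an $l$ with $\|\sigma\|_{2}\lesssim\|E_{\mu}\sigma\|_{l}$ uniformly, and then substitutes straight into \eqref{uniform_estimate_E}. You instead run the compactness argument at level~$0$ to get $\|E_{\mu}^{-1}\sigma\|\lesssim\|\sigma\|$, and then bootstrap to $\|E_{\mu}^{-1}\sigma\|_{2}\lesssim\|\sigma\|_{2}$ by applying \eqref{uniform_estimate_E} with $k=2$ together with the interpolation inequality $\|\tau\|_{2}\le\varepsilon\|\tau\|_{5/2}+C_{\varepsilon}\|\tau\|$, using that $[[\mu]]_{4}$ is bounded on the chosen neighbourhood so that the absorption constant is uniform. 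Your route is slightly more self-contained and has the bonus of giving the explicit value $l=2$, at the price of fixing the neighbourhood in the $\mathcal{C}^{4}$ topology; the paper's route leaves $l$ implicit but keeps the compactness step in one place.
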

\begin{proof}
With a contradiction argument \cite[Lemma on p.453]{Hamilton1977Deformation2} one can conclude from the estimates \eqref{uniform_estimate_E} that there exists $l\in\mathbb{N}$ such that for all $\mu$ in some neighbourhood of $0$ and $\sigma\in\Omega^-_H$  we have
\begin{align}
\|\sigma\|_2 \lesssim \|E_{\mu}\sigma\|_{l}.
\label{Poincare-inequality}
\end{align}
By \eqref{uniform_estimate_E} and \eqref{Poincare-inequality}
we have
\begin{align}
\|E^{-1}_{\mu}\sigma\|_{k+\frac{1}{2}}
\lesssim
\|\sigma\|_{k}+([[\mu]]_{k+2}+1) \|\sigma\|_{l}.
\label{E-1tame}
\end{align}
Applying the second Moser estimate \eqref{2nd-Moser_estimate} to the operator $D_{\mu}^*$, which depends on $\mu$ and its first derivative, gives for sufficiently small $\mu$
\begin{align*}
\|R_{\mu}\sigma\|_k
=
\|D_{\mu}^*E_{\mu}^{-1}\sigma\|_k
\lesssim
\|E_{\mu}^{-1}\sigma\|_{k+1}
+
[[\mu]]_{k+1} \|E_{\mu}^{-1}\sigma\|_1,
\end{align*}
which together with \eqref{Poincare-inequality} leads to the estimate
\begin{align*}
\|R_{\mu}\sigma\|_k
\lesssim
\|E_{\mu}^{-1}\sigma\|_{k+\frac{3}{2}}
+
[[\mu]]_{k+1} \|\sigma\|_l.
\end{align*}
With \eqref{E-1tame} we get the desired estimate \eqref{uniform-right-inv-estimate}.
\end{proof}

\subsection{The obstruction space and the tangential Cauchy--Riemann operator}

Here we briefly review the $\bar{\partial}_b$-operator associated with the CR-structure $I$, i.e. the one induced on $M$ from the complex structure of the ambient manifold $Z$. 
For details we refer to \cite{Boggess}.
The action of $I$ on $H\otimes \mathbb{C}$ has eigenvalues $\pm i$ with eigenbundle decomposition $H\otimes \mathbb{C}=H^{1,0}\oplus H^{0,1}$, where $H^{0,1}=\overline{H^{1,0}}$.
This has a dual decomposition $H^*\otimes \mathbb{C}= (H^*)^{1,0}\oplus (H^*)^{0,1}$. Set $\Lambda^{p,q}_H = \Lambda^p (H^*)^{1,0} \otimes \Lambda^q (H^*)^{0,1}$ and $\Lambda^{p,q}_M = \theta\wedge \Lambda^{p-1,q}_H \oplus \Lambda^{p,q}_H$. 
Write $\Omega^{p,q}_H$ for sections of $\Lambda^{p,q}_H$ and $\Omega^{p,q}_M$ for sections of $\Lambda^{p,q}_M$.
We have decompositions $\Lambda^k_H = \oplus_{p+q=k} \Lambda^{p,q}_H$ and 
$\Lambda^k_M = \oplus_{p+q=k} \Omega^{p,q}_M$.
Write $\pi^{p,q}_H$ for the projection onto $\Lambda^{p,q}_H$ and $\pi^{p,q}_M$ for the projection onto $\Lambda^{p,q}_M$.

The $\bar{\partial}_b$-operator is given by
\begin{align*}
\bar{\partial}_b = \pi^{p,q+1}_M \circ d  : \Omega^{p,q}_M \rightarrow \Omega^{p,q+1}_M
\end{align*}
and satisfies $\bar{\partial}_b^2=0$, i.e. leads to a complex.
For $\gamma\in\Omega^{p,q}_H$, $d_H\gamma$ takes values in $\Omega^{p+1,q}_H\oplus \Omega^{p,q+1}_H$, i.e. we have a splitting $d_H = \partial_H+ \bar{\partial}_H$.  

Under the identification $\Omega^{p,q}_M\cong \Omega^{p-1,q}_H\oplus \Omega^{p,q}_H$, in matrix notation $\bar{\partial}_b$ is given by
\begin{align*}
\bar{\partial}_b
=
\begin{blockarray}{ccc}
\Omega^{p-1,q}_H & \Omega^{p,q}_H
\\
\begin{block}{(cc)c}
-\bar{\partial}_H &  S & \Omega^{p-1,q+1}_H
\\
\omega\wedge\cdot & \bar{\partial}_H & \Omega^{p,q+1}_H
\\
\end{block}
\end{blockarray}
\end{align*}
where we write $S:= \pi^{p-1,q+1}_H\circ \mathcal{L}_v$.
The adjoint $\bar{\partial}_b^*:\Omega^{p,q+1}_M\rightarrow\Omega^{p,q}_M$ of $\bar{\partial}_b:\Omega^{p,q}_M\rightarrow\Omega^{p,q+1}_M$ is given by
\begin{align*}
\bar{\partial}_b^*
=
\begin{blockarray}{ccc}
\Omega^{p-1,q+1}_H & \Omega^{p,q+1}_H
\\
\begin{block}{(cc)c}
-\bar{\partial}_H^* &  \Lambda & \Omega^{p-1,q}_H
\\
S^* & \bar{\partial}_H^* & \Omega^{p,q}_H
\\
\end{block}
\end{blockarray}
\end{align*}
where $\Lambda$ is the adjoint of $\omega\wedge\cdot$. 
A similar computation as in the proof of Lemma \ref{Lemma-adjoint} shows that $\bar{\partial}_H^* = \pm * \partial_H *$.
The decomposition $\Lambda^2_H=\Lambda^+_H\oplus \Lambda^-_H$
relates to the decomposition in $(p,q)$-forms as
\begin{align}
\Lambda^{1,1}_H = \mathbb{C}\omega \oplus \Lambda_H^-\otimes\mathbb{C},
\quad
\Lambda^{2,0}_H=\mathbb{C}(\alpha+i\beta).
\label{11vs-}
\end{align} 

\begin{lemma}
\label{lemma-coclosed}
If $\sigma\in\mathcal{H}$, then $\bar{\partial}_b^*\sigma=0$.
\end{lemma}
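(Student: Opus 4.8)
The plan is to unwind the definitions of $\mathcal{H}$ and of $\bar{\partial}_b^*$ and reduce the claim to two observations: that $\sigma$ is a primitive $(1,1)$-form on $H$, and that $d_H\sigma=0$ already forces $\partial_H\sigma=0$. First I would recall that, by \eqref{obstruction-space}, $\sigma\in\mathcal{H}$ means exactly $\sigma\in\Omega^-_H$ and $d_H\sigma=0$. By the decomposition \eqref{11vs-}, $\Lambda^-_H\otimes\mathbb{C}$ is the orthogonal complement of $\mathbb{C}\omega$ inside $\Lambda^{1,1}_H$, so $\sigma$ is a primitive $(1,1)$-form: it lies in $\Omega^{1,1}_H$ and $\Lambda\sigma=0$, where $\Lambda$ is the adjoint of $\omega\wedge\cdot$. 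Viewing $\sigma$ as a section of $\Lambda^{1,1}_M=\theta\wedge\Lambda^{0,1}_H\oplus\Lambda^{1,1}_H$ via the second summand, it corresponds under the identification $\Omega^{1,1}_M\cong\Omega^{0,1}_H\oplus\Omega^{1,1}_H$ to the column vector $(0,\sigma)$.

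Next I would apply the matrix formula above for $\bar{\partial}_b^*:\Omega^{1,1}_M\to\Omega^{1,0}_M$ (the case $p=1$, $q=0$). Acting on $(0,\sigma)$ it gives
\[
\bar{\partial}_b^*\sigma = (\,\Lambda\sigma,\ \bar{\partial}_H^*\sigma\,),
\]
an element of $\Omega^{0,0}_H\oplus\Omega^{1,0}_H\cong\Omega^{1,0}_M$. The first component vanishes by primitivity of $\sigma$. For the second, since $\sigma$ is anti-self-dual on the rank-$4$ bundle $H$ we have $*\sigma=-\sigma$ for the Hodge star of $H$, so the identity $\bar{\partial}_H^*=\pm *\,\partial_H\,*$ quoted above gives $\bar{\partial}_H^*\sigma=\mp *\partial_H\sigma$. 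It then remains to see $\partial_H\sigma=0$: under the splitting $d_H=\partial_H+\bar{\partial}_H$ the two summands of $d_H\sigma$ lie in the distinct bidegrees $\Omega^{2,1}_H$ and $\Omega^{1,2}_H$, so $d_H\sigma=0$ forces $\partial_H\sigma=0$ (and $\bar{\partial}_H\sigma=0$) separately. Hence $\bar{\partial}_H^*\sigma=0$, and therefore $\bar{\partial}_b^*\sigma=0$.

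Since every step is a direct consequence of structural facts already recorded above, I do not anticipate a genuine obstacle. The only point needing a little care is the bookkeeping of the $(p,q)$-indices, so that the matrix formula for $\bar{\partial}_b^*$ is invoked in the right bidegree, together with keeping straight whether $\sigma$ is regarded as a form on $H$ or as a form on $M$; slipping on the bidegree would, for instance, spuriously involve the mixed term $S^*$ and obscure the vanishing.
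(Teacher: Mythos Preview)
Your proposal is correct and follows essentially the same route as the paper: both arguments use \eqref{11vs-} to get $\Lambda\sigma=0$, then combine $*\sigma=-\sigma$ with $\bar{\partial}_H^*=\pm*\partial_H*$ and the bidegree splitting of $d_H\sigma=0$ to obtain $\bar{\partial}_H^*\sigma=0$, and conclude via the matrix description of $\bar{\partial}_b^*$. Your write-up is somewhat more explicit about the $(p,q)$-bookkeeping and the matrix computation, but the underlying argument is the same.
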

\begin{proof}
By the decomposition \eqref{11vs-} the map $\Lambda$ vanishes on $\Lambda^-_H$. Therefore, for $\sigma\in\mathcal{H}$
we have
\begin{align*}
\Lambda\sigma=0,
\quad\quad
\bar{\partial}_H^*\sigma = \pm *\partial_H *\sigma
= \pm * \partial_H \sigma
=
\pm * \pi^{2,1}_H d_H\sigma
=0
.
\end{align*}
This implies $\bar{\partial}^*_b\sigma=0$.
\end{proof}
If the ambient space $Z$ is a Stein manifold, the cohomology groups of the $\bar{\partial}_b$-complex vanish and there are no non-trivial $\bar{\partial}_b$-harmonic forms on $M$  (see Appendix \ref{appendix-vanishing}).
This has two consequences. First, every $\sigma\in\mathcal{H}$ is $\bar{\partial}_b^*$-exact. Second, if $\bar{\partial}_b\sigma=0$, then $\sigma=0$.
With this in mind, in the following we compute $\|\bar{\partial}_b\sigma\|$ for $\sigma\in\mathcal{H}$.

\begin{proposition}
\label{prop-norm-delbar}
For $\sigma\in\mathcal{H}$ we have 
\begin{align*}
|\bar{\partial}_b\sigma|^2
=
\frac{1}{4}
(\langle\sigma,\mathcal{L}_v\alpha\rangle^2 + \langle\sigma,\mathcal{L}_v\beta\rangle^2 ).
\end{align*}
In particular, if $(\theta,\omega,\alpha,\beta)$ is a Sasaki--Einstein structure, this implies with the equations \ref{SE-equations} that $\bar{\partial}_b\sigma=0$.
\end{proposition}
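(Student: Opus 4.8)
The plan is to compute $\bar{\partial}_b\sigma$ directly from the matrix presentation of $\bar{\partial}_b$ recalled above, use the hypothesis $d_H\sigma=0$ to kill all but one term, and then finish with two short pieces of linear algebra.

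\emph{Step 1: identify $\bar{\partial}_b\sigma$.} View $\sigma\in\mathcal{H}\subset\Omega^-_H\subset\Omega^{1,1}_H$ as the element $(0,\sigma)$ of $\Omega^{1,1}_M\cong\Omega^{0,1}_H\oplus\Omega^{1,1}_H$ and apply the matrix formula for $\bar{\partial}_b\colon\Omega^{1,1}_M\to\Omega^{1,2}_M$ (so here $p=q=1$ and $S=\pi^{0,2}_H\circ\mathcal{L}_v$). This gives $\bar{\partial}_b\sigma=\theta\wedge\bigl(\pi^{0,2}_H\mathcal{L}_v\sigma\bigr)+\bar{\partial}_H\sigma$. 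Now $\bar{\partial}_H\sigma=\pi^{1,2}_H(d_H\sigma)=0$, since $\sigma\in\mathcal{H}$ means $d_H\sigma=0$, and as $|\theta|=1$ with $\theta$ orthogonal to $H^*$ we obtain $|\bar{\partial}_b\sigma|^2=|\pi^{0,2}_H\mathcal{L}_v\sigma|^2$. (We use here that $\mathcal{L}_v$ preserves $\Omega^2_H$ and that $\mathcal{L}_v\sigma$ is again real.)

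\emph{Step 2: the norm of the $(0,2)$-part.} For any \emph{real} $2$-form $\xi$ on $H$ one has $\pi^{2,0}_H\xi=\overline{\pi^{0,2}_H\xi}$, so $|\pi^{2,0}_H\xi|=|\pi^{0,2}_H\xi|$, and as forms of different bidegree are orthogonal, $2|\pi^{0,2}_H\xi|^2=|(\pi^{2,0}_H+\pi^{0,2}_H)\xi|^2$. Since $\alpha+i\beta$ spans $\Lambda^{2,0}_H$, the subspace $\Lambda^{2,0}_H\oplus\Lambda^{0,2}_H$ is the complexification of $\mathrm{span}_{\mathbb{R}}\{\alpha,\beta\}$, and $\alpha,\beta$ are orthogonal with $|\alpha|^2=|\beta|^2=2$ (here the normalisations $\Vol_H=\tfrac12\omega^2$, $\alpha^2=\omega^2$ of Section~\ref{section1} enter: $|\alpha|^2\Vol_H=\alpha\wedge*\alpha=\alpha^2=2\Vol_H$). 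Hence $(\pi^{2,0}_H+\pi^{0,2}_H)\xi=\tfrac12\bigl(\langle\xi,\alpha\rangle\,\alpha+\langle\xi,\beta\rangle\,\beta\bigr)$ and $|\pi^{0,2}_H\xi|^2=\tfrac14\bigl(\langle\xi,\alpha\rangle^2+\langle\xi,\beta\rangle^2\bigr)$. Taking $\xi=\mathcal{L}_v\sigma$ gives the claimed formula, but with $\mathcal{L}_v\sigma$ paired against $\alpha,\beta$ rather than $\sigma$ against $\mathcal{L}_v\alpha,\mathcal{L}_v\beta$.

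\emph{Step 3: transferring the Lie derivative, and the corollary.} It remains to check $\langle\mathcal{L}_v\sigma,\alpha\rangle=\langle\sigma,\mathcal{L}_v\alpha\rangle$ and likewise for $\beta$. Since $\sigma$ is anti-self-dual and $\alpha$ self-dual, $\sigma\wedge\alpha=0$; because $\mathcal{L}_v\omega=0$ the pairing $\mu.\nu=\tfrac{\mu\wedge\nu}{\omega^2}$ satisfies a Leibniz rule for $\mathcal{L}_v$, so $0=\mathcal{L}_v(\sigma.\alpha)=(\mathcal{L}_v\sigma).\alpha+\sigma.(\mathcal{L}_v\alpha)$. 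For $\mu,\nu\in\Omega^2_H$ we have $\mu.\nu=\tfrac12\langle\mu,*\nu\rangle=\tfrac12\langle*\mu,\nu\rangle$; applying this with $*\alpha=\alpha$ and $*\sigma=-\sigma$ turns the previous relation into $\tfrac12\langle\mathcal{L}_v\sigma,\alpha\rangle-\tfrac12\langle\sigma,\mathcal{L}_v\alpha\rangle=0$, which together with Steps 1--2 proves the identity. The Sasaki--Einstein case is then immediate: \eqref{SE-equations} gives $\mathcal{L}_v\alpha=\beta$ and $\mathcal{L}_v\beta=-\alpha$, both self-dual and hence orthogonal to $\sigma\in\Omega^-_H$, so the right-hand side vanishes and $\bar{\partial}_b\sigma=0$. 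The one point requiring care is this last step: one must track the sign coming from $*\sigma=-\sigma$ together with the fact that $\mathcal{L}_v\alpha$ need not be self-dual, and keep the numerical factors from the conventions $\Vol_H=\tfrac12\omega^2$ and $|\alpha|^2=|\beta|^2=2$ straight; the rest is formal manipulation of the bidegree decomposition.
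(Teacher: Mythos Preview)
Your proof is correct and follows essentially the same route as the paper: both identify $\bar{\partial}_b\sigma=\theta\wedge\pi^{0,2}_H\mathcal{L}_v\sigma$ from $d_H\sigma=0$, transfer the Lie derivative via $\mathcal{L}_v(\sigma\wedge\alpha)=0$ together with $*\sigma=-\sigma$, and then unwind the norm of the $(0,2)$-part. The only cosmetic difference is that the paper writes $\pi^{0,2}_H\mathcal{L}_v\sigma=u(\alpha-i\beta)$ and computes the scalar $u$ directly via wedge products, whereas you phrase the same computation as an orthogonal projection onto $\mathrm{span}_{\mathbb{R}}\{\alpha,\beta\}$; the bookkeeping with $|\alpha|^2=|\beta|^2=2$ and $\Vol_H=\tfrac12\omega^2$ is handled consistently in both.
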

\begin{proof}
Because $d_H\sigma=0$, we have $\bar{\partial}_b \sigma = \theta\wedge\pi^{0,2}\mathcal{L}_v \sigma$.
Write $\pi^{0,2}\mathcal{L}_v \sigma=u (\alpha-i\beta)$ for some complex valued function $u$ on $M$.
Then
\begin{align*}
\mathcal{L}_v \sigma\wedge(\alpha+i\beta)
=4 u \Vol_H
.
\end{align*}
Now we have
\begin{align*}
0 = \mathcal{L}_v(\sigma\wedge\alpha)
=
\mathcal{L}_v \sigma\wedge\alpha + \sigma\wedge\mathcal{L}_v\alpha.
\end{align*}
This implies $\mathcal{L}_v\sigma\wedge\alpha=-\sigma\wedge\mathcal{L}_v\alpha=\langle \sigma,\mathcal{L}_v \alpha\rangle\Vol_H$.
Combining the above gives
\begin{align*}
u = \frac{1}{4}(\langle\sigma,\mathcal{L}_v\alpha\rangle + i\langle\sigma,\mathcal{L}_v\beta\rangle )
\end{align*}
and
\begin{align*}
|\bar{\partial}_b\sigma|^2=
|\pi^{0,2}\mathcal{L}_v\sigma|^2
=
4 u \bar{u}
=
\frac{1}{4}
(\langle\sigma,\mathcal{L}_v\alpha\rangle^2 + \langle\sigma,\mathcal{L}_v\beta\rangle^2 ).
\end{align*}
\end{proof}

Lemma \ref{lemma-coclosed}, Proposition \ref{prop-norm-delbar}
and Corollary \ref{corollary-Stein-vanishing} 
prove the third item in Theorem \ref{mainTHM}.
\begin{corollary}
\label{vanishing}
If $F:M\rightarrow Z$ is a strongly pseudoconvex embedding of $M$ into a Stein manifold $Z$ such that the induced $\SU(2)$-structure is Sasaki--Einstein, then $\mathcal{H}=0$.
\end{corollary}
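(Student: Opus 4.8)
The plan is to combine the two facts established just before the statement: for $\sigma\in\mathcal{H}$ we have $\bar\partial_b^*\sigma=0$ (Lemma \ref{lemma-coclosed}) and, because the $\SU(2)$-structure is Sasaki--Einstein, $\bar\partial_b\sigma=0$ as well (Proposition \ref{prop-norm-delbar}, using the equations \eqref{SE-equations} which give $\mathcal{L}_v\alpha=\beta$, $\mathcal{L}_v\beta=-\alpha$, both orthogonal to $\sigma\in\Omega^-_H$ since $\alpha,\beta\in\Lambda^+_H$). Thus every $\sigma\in\mathcal{H}$, viewed as a section of $\Lambda^{1,1}_H\otimes\mathbb{C}\subset\Lambda^{1,1}_M$ via the decomposition \eqref{11vs-}, is both $\bar\partial_b$-closed and $\bar\partial_b^*$-closed, i.e.\ it is a $\bar\partial_b$-harmonic $(1,1)$-form on $M$.

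First I would recall from Appendix \ref{appendix-vanishing} (invoked here as Corollary \ref{corollary-Stein-vanishing}) that when the ambient space $Z$ is Stein, $M$ being a strongly pseudoconvex real hypersurface bounding a relatively compact domain, the $\bar\partial_b$-cohomology of $M$ vanishes in the relevant bidegrees, and hence there are no nonzero $\bar\partial_b$-harmonic forms in those bidegrees; in particular any $\bar\partial_b$-closed $(1,1)$-form which is also $\bar\partial_b^*$-closed must vanish. Applying this to $\sigma$ gives $\sigma=0$, so $\mathcal{H}=0$. The last sentence of the corollary then follows by observing that the round sphere $S^5\subset\mathbb{C}^3$ carries the standard Sasaki--Einstein structure and $\mathbb{C}^3$ is Stein.

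The one point that needs care—and is really the substance hidden in the appendix—is the vanishing statement itself: one must know that for a compact strongly pseudoconvex CR manifold bounding a Stein domain, the Kohn--Rossi / $\bar\partial_b$-Laplacian has trivial kernel in bidegree $(1,1)$ (equivalently that $H^{1,1}_{\bar\partial_b}(M)=0$), which rests on Kohn's subelliptic estimates and the solvability of $\bar\partial_b$ in the strongly pseudoconvex setting, or alternatively on a Hodge-theoretic comparison with $\bar\partial$-cohomology of the domain via the results quoted in Appendix \ref{appendix-vanishing}. Since that is exactly what Corollary \ref{corollary-Stein-vanishing} provides, the proof here is short: it is just the assembly of Lemma \ref{lemma-coclosed}, Proposition \ref{prop-norm-delbar} and that vanishing result, together with the identification \eqref{11vs-} that places $\Lambda^-_H\otimes\mathbb{C}$ inside $\Lambda^{1,1}_H$.

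I would therefore write the proof as: let $\sigma\in\mathcal{H}$; by Lemma \ref{lemma-coclosed}, $\bar\partial_b^*\sigma=0$; since the structure is Sasaki--Einstein, Proposition \ref{prop-norm-delbar} together with \eqref{SE-equations} gives $|\bar\partial_b\sigma|^2=\tfrac14(\langle\sigma,\beta\rangle^2+\langle\sigma,\alpha\rangle^2)=0$ because $\alpha,\beta\perp\Lambda^-_H$; hence $\sigma$ is a $\bar\partial_b$-harmonic form of bidegree $(1,1)$ on $M$; by Corollary \ref{corollary-Stein-vanishing} such a form vanishes, so $\sigma=0$ and $\mathcal{H}=0$. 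The expected main obstacle is entirely bookkeeping-free here and lives in the appendix: establishing the $\bar\partial_b$-vanishing for Stein-bounding strongly pseudoconvex $M$; in the body of the paper one simply cites it.
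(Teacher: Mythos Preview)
Your proposal is correct and follows essentially the same route as the paper: combine Lemma~\ref{lemma-coclosed} ($\bar\partial_b^*\sigma=0$), Proposition~\ref{prop-norm-delbar} with the Sasaki--Einstein equations \eqref{SE-equations} ($\bar\partial_b\sigma=0$), and the vanishing result Corollary~\ref{corollary-Stein-vanishing} to conclude $\sigma=0$. The only minor slip is that the remark about $S^5\hookrightarrow\mathbb{C}^3$ is part of Theorem~\ref{mainTHM}, not of this corollary as stated.
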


\section{The perturbative embedding problem}

\label{section-Nash-Moser}

In this section we will prove the second item in Theorem \ref{mainTHM}.
Denote by $\mathcal{E}(M,Z)$ the set of smooth embeddings of $M$ into $Z$.
Because $M$ is compact, this is a tame Fr{\'e}chet manifold \cite[Corollary II.2.3.2]{HamiltonIFT}. If $F:M\hookrightarrow Z$ is an embedding, a chart $u: \mathcal{U}\subset \mathcal{E}(M,Z)\rightarrow \mathcal{C}^{\infty}(M,F^*TZ)$ around $F$ is obtained as follows: Choose a Riemannian metric on $Z$ with associated exponential map, which maps a neighbourhood of the zero section in $TZ$ to $Z$. Then for $\widetilde{F}\in\mathcal{E}(M,Z)$ close to $F$, we define $u(\widetilde{F})\in\mathcal{C}^{\infty}(M,F^*TZ)$ by  $\exp(u(\widetilde{F}))=\widetilde{F}$.

Let $F\in\mathcal{E}(M,Z)$ be a strongly pseudoconvex 
embedding with induced $3$-form $\psi:= F^*\Psi$ and contact hyperk\"ahler  $\SU(2)$-structure $(\theta,\omega,\alpha,\beta)$. Denote by $\mathcal{E}_0(M,Z)$ the connected component of $F$.
To prove the existence part in Theorem \ref{mainTHM}, we want to show that the map
\begin{gather*}
P: \mathcal{E}_0(M,Z)\rightarrow \psi+d\Omega^2(M),
\\
\widetilde{F} \mapsto \widetilde{F}^*\Psi.
\end{gather*} 
is surjective onto a nighbourhood of $\psi$.
With the aim of applying a type of inverse function theorem, 
we study the properties of the derivative of $P$ at $F$, which is a map
\begin{gather*}
DP(F): T_{F}\mathcal{E}(M,Z)=\mathcal{C}^{\infty}(M,F^*TZ)
\rightarrow d\Omega^2(M).
\end{gather*}
We have the identification 
\begin{align*}
F^*TZ
=
\mathbb{R}v \oplus \mathbb{R}Iv\oplus H,
\end{align*}
where $I$ is the complex structure of the ambient space $Z$.
Consider the map
\begin{gather*}
K(F): \mathcal{C}^{\infty}(M,F^*TZ)= \mathcal{C}^{\infty}(M)\oplus \mathcal{C}^{\infty}(M)\oplus \mathcal{C}^{\infty}(M,H)
\rightarrow \Omega^2(M),
\\
(h_{\alpha}, h_{\beta},w_H)\mapsto
F^*((h_{\alpha}  v + h_{\beta} Iv + w_H)\lrcorner \Psi) 
=
h_{\alpha}\, \alpha + h_{\beta}\, \beta + (w_H\lrcorner\alpha)\wedge\theta.
\end{gather*}
Then $DP(F)=d\circ K(F)$. 
We first show that the cokernel of $DP(F)$ can be identified with the cokernel $\mathcal{H}$ of the map $d_H^-$.
We first note that for $d\sigma\in d\Omega^2(M)$, the property $\sigma\perp_{L^2} \mathcal{H}$, where we use the $L^2$-norm defined by the fibre-wise inner product $g$, is independent of the primitive $\sigma$ and can be expressed in terms of 
 the ``linking form'' 
\begin{align*}
L(d\tau,d\sigma) = \int_M d\tau\wedge\sigma
\end{align*}
on $d\Omega^2(M)$,
which is well-defined by Stokes' theorem. 
If $\zeta\in\mathcal{H}$, then $d(\theta\wedge\zeta) = \omega\wedge\zeta-\theta\wedge d_H\zeta =0$, and 
\begin{align*}
(\zeta,\sigma)
=-
\int_M \theta\wedge\zeta\wedge\sigma
=-L(\theta\wedge\zeta,d\sigma).
\end{align*}

\begin{proposition}
\label{Prop-CokernelDP}
$d\sigma\in d\Omega^2(M)$ is in the image of $DP(F)$ if and only if 
$L(\theta\wedge\zeta,d\sigma)=0$
for all $\zeta\in\mathcal{H}$.
\end{proposition}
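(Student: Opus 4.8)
The plan is to prove the two implications separately, using the factorisation $DP(F)=d\circ K(F)$ and the pointwise description $\operatorname{im}K(F)=\mathbb R\alpha\oplus\mathbb R\beta\oplus(\theta\wedge H^{*})\subset\Lambda^{2}T^{*}M$, the last summand arising because $w_{H}\mapsto w_{H}\lrcorner\alpha$ is a bundle isomorphism $H\to H^{*}$. For the ``only if'' direction, suppose $d\sigma=DP(F)(h_{\alpha},h_{\beta},w_{H})$. Then $\tau:=h_{\alpha}\alpha+h_{\beta}\beta+(w_{H}\lrcorner\alpha)\wedge\theta$ is itself a primitive of $d\sigma$. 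For $\zeta\in\mathcal H$ the $3$-form $\theta\wedge\zeta$ is closed (the computation $d(\theta\wedge\zeta)=\omega\wedge\zeta-\theta\wedge d_{H}\zeta=0$, using $\omega\wedge\zeta=0$ for an anti-self-dual $\zeta$), so $L(\theta\wedge\zeta,d\sigma)=\int_{M}\theta\wedge\zeta\wedge\sigma$ is independent of the chosen primitive and may be evaluated on $\tau$; there the summand with $(w_{H}\lrcorner\alpha)\wedge\theta$ contains a factor $\theta^{2}$, while the summands with $\alpha$ and $\beta$ vanish because on the $4$-plane $H$ self-dual and anti-self-dual forms wedge to zero. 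Hence $L(\theta\wedge\zeta,d\sigma)=0$.

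For the converse, I would rewrite $d\sigma\in\operatorname{im}DP(F)$ as: there exist $h_{\alpha},h_{\beta}\in\mathcal C^{\infty}(M)$ and $\mu\in\Omega^{1}_{H}$ with $\sigma-(h_{\alpha}\alpha+h_{\beta}\beta-\theta\wedge\mu)$ closed. Writing $\sigma=\theta\wedge\sigma_{1}+\sigma_{2}$ via \eqref{splitting-sections} and expanding $d$ with the structural relations $d\theta=\omega$, $d_{H}\alpha=d_{H}\beta=0$, $d_{H}\omega=0$, $\mathcal L_{v}\omega=0$, $d_{H}^{2}=-\omega\wedge\mathcal L_{v}$ and $[\mathcal L_{v},d_{H}]=0$, the $\Omega^{3}_{H}$-component of the closedness equation determines $\mu$ uniquely in terms of $h_{\alpha},h_{\beta}$ via the four-dimensional hard-Lefschetz isomorphism $\omega\wedge(\cdot)\colon\Omega^{1}_{H}\xrightarrow{\ \sim\ }\Omega^{3}_{H}$; feeding this back into the $\theta\wedge\Omega^{2}_{H}$-component collapses everything to the single equation $\mathcal Q\rho=0$ in $\Omega^{2}_{H}$, where $\rho:=h_{\alpha}\alpha+h_{\beta}\beta-\sigma_{2}$ and $\mathcal Q:=\mathcal L_{v}+d_{H}(\omega\wedge\cdot)^{-1}d_{H}$. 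A short calculation with the same relations gives $\mathcal Q(\mathcal C^{\infty}(M)\cdot\omega)=0$ and $\mathcal Q(d_{H}\Omega^{1}_{H})=0$, so only the anti-self-dual part $\sigma^{-}$ of $\sigma_{2}$ survives and one is reduced to solving $\mathcal Q(x\alpha+y\beta)=\mathcal Q(\sigma^{-})$ for $x,y\in\mathcal C^{\infty}(M)$. Tracking the bookkeeping, this identifies $\operatorname{coker}DP(F)\cong\Omega^{-}_{H}/\pi^{-}(\ker\mathcal Q)$, and the goal becomes to show $\pi^{-}(\ker\mathcal Q)=\operatorname{im}d_{H}^{-}=\mathcal H^{\perp}$: the inclusion $\supseteq$ is immediate from $d_{H}\Omega^{1}_{H}\subseteq\ker\mathcal Q$, and combined with the Hodge decomposition $\Omega^{-}_{H}=\mathcal H\oplus\operatorname{im}\Box_{H}$ of Proposition~\ref{Prop-Fredholm-alternative} it identifies the quotient with $\mathcal H$, consistently with the pairing $\zeta\mapsto-(\zeta,\sigma^{-})=L(\theta\wedge\zeta,d\sigma)$.

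The main obstacle is the reverse inclusion $\pi^{-}(\ker\mathcal Q)\subseteq\mathcal H^{\perp}$, i.e.\ that the cokernel is no larger than $\mathcal H$: one must show that for every closed $2$-form $\kappa$ on $M$ the anti-self-dual part of its $\Omega^{2}_{H}$-component is $L^{2}$-orthogonal to $\mathcal H$, equivalently that $\theta\wedge\zeta$ represents the trivial class for each $\zeta\in\mathcal H$ (equivalently $\mathcal H\subseteq\operatorname{im}\mathcal Q$, which is what makes $L(\theta\wedge\zeta,d\sigma)$ genuinely primitive-independent). Here I would use that every $\zeta\in\mathcal H$ is coclosed on $M$ — since $*_{M}\zeta=\pm\,\theta\wedge\zeta$ and $\theta\wedge\zeta$ is closed, whence $d^{*}_{M}\zeta=0$ — so $\zeta$ is automatically $L^{2}(M)$-orthogonal to all exact $2$-forms, reducing matters to the harmonic part; the remaining input is the $\bar\partial_{b}$-type vanishing for elements of $\mathcal H$ (Lemma~\ref{lemma-coclosed}, together with $d_{H}^{*}\zeta=0$) and a careful accounting of how the self-dual parameters $h_{\alpha},h_{\beta}$ and the non-skew-adjoint operator $\mathcal L_{v}$ enter $\mathcal Q$. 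This step — rather than the easy inclusion or the formal reduction — is where the real work lies.
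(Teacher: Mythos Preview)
Your ``only if'' direction is correct and matches the paper. But in the converse you have confused yourself: your argument is actually already complete, and the ``main obstacle'' you flag is not needed.

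Here is why. You correctly reduce $d\sigma\in\operatorname{im}DP(F)$ to the condition $\sigma^{-}\in\pi^{-}(\ker\mathcal Q)$. The two directions of the proposition then translate as follows: ``if'' becomes $\mathcal H^{\perp}\subseteq\pi^{-}(\ker\mathcal Q)$, and ``only if'' becomes $\pi^{-}(\ker\mathcal Q)\subseteq\mathcal H^{\perp}$. Your ``easy inclusion'' $\operatorname{im}d_{H}^{-}\subseteq\pi^{-}(\ker\mathcal Q)$ is exactly the first of these (using $\mathcal H^{\perp}=\operatorname{im}d_{H}^{-}$ from Proposition~\ref{Prop-Fredholm-alternative}), so it \emph{is} the ``if'' direction. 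The reverse inclusion would re-prove ``only if'' via $\mathcal Q$ --- but you have already established ``only if'' directly in your first paragraph by choosing the primitive $\tau\in\operatorname{im}K(F)$. So nothing is missing; you have simply forgotten that one direction was already done. The discussion of $\theta\wedge\zeta$ being exact, $\bar\partial_{b}$-vanishing, etc., is all unnecessary.

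That said, your route is far more elaborate than the paper's. The paper never introduces $\mathcal Q$. For the ``if'' direction it argues constructively in two strokes: given the hypothesis $\sigma_{-}\perp\mathcal H$, solve $d_{H}^{-}\eta=\sigma_{-}$ (possible by Proposition~\ref{Prop-Fredholm-alternative}); then $\sigma-d\eta$ has vanishing anti-self-dual part, so it lies in $\Omega^{+}_{H}\oplus\theta\wedge\Omega^{1}_{H}$. Writing its self-dual part as $h_{\omega}\omega+h_{\alpha}\alpha+h_{\beta}\beta$, subtract $d(h_{\omega}\theta)$ to kill the $\omega$-component. What remains is $h_{\alpha}\alpha+h_{\beta}\beta+\chi''\wedge\theta\in\operatorname{im}K(F)$. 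This is the whole argument. Your $\mathcal Q$-formalism encodes the same content --- indeed $d_{H}\Omega^{1}_{H}\subseteq\ker\mathcal Q$ is precisely the statement that subtracting $d\eta$ is allowed, and $\mathcal C^{\infty}\omega\subseteq\ker\mathcal Q$ is the statement that subtracting $d(h_{\omega}\theta)$ is allowed --- but it obscures rather than clarifies.
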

\begin{proof}
Because $\im\, K(F) \perp \Lambda^-_H$, it is clear that every element in $\im\, DP(F)=d(\im\, K(F))$ has a primitive which is $L^2$-orthogonal to $\mathcal{H}\subset\Omega^-_H$. 

To show that the condition is sufficient, suppose that $d\sigma$ satisfies $L(\theta\wedge\zeta,d\sigma)=0$ for all $\zeta\in\mathcal{H}$. 
Decompose $\sigma$ as
\begin{align*}
\sigma=\sigma_H +\chi\wedge\theta=\sigma_+ + \sigma_-+\chi\wedge\theta, \end{align*}
where $\sigma_H\in\Omega_H^2$, $\sigma_{\pm} = \pi_{\pm}(\sigma_H)\in\Omega^{\pm}_H$ and $\chi\in\Omega_H^1$.
Because the freedom in choosing a primitive in $\Omega^2(M)$ is $d\Omega^1(M)$, we want to add an exact $2$-form to $\sigma$ to obtain an element in $\im\, K(F)$. Comparing $\im\, K(F)$ and $\Lambda^2_M$, we see that our task is to eliminate the component of $\sigma$ in $\Lambda^-_{H}\oplus \mathbb{R}\omega$.
Our assumption on $d\sigma$ is that $\sigma_-\perp_{L^2}\mathcal{H}$. 
Therefore, by Proposition \ref{Prop-Fredholm-alternative} we have a solution $\eta\in\Omega_H^1$ of the equation
\begin{align}
d_{H}^- \eta = \sigma_-.
\label{crucial-equ}
\end{align} 
Thus we have
\begin{align*}
\sigma-d\eta
= \sigma_+ - d^+_H \eta + (\chi+\mathcal{L}_v \eta)\wedge\theta
=:
\sigma'_+ + \chi'\wedge\theta.
\end{align*} 
Next decompose 
\begin{align*}
\sigma'_+ = h_{\omega} \omega + h_{\alpha} \alpha + h_{\beta} \beta.
\end{align*}
We eliminate the term involving $\omega$ by
subtracting $d(h_{\omega} \theta)$. We get
\begin{align*}
\sigma - d(\eta+h_{\omega} \theta)
=
h_{\alpha} \alpha + h_{\beta} \beta + (\chi'-d_H h_{\omega})\wedge\theta
=:
h_{\alpha} \alpha + h_{\beta} \beta + \chi''\wedge\theta
.
\end{align*}
This expression now lies in the image of $K(F)$.
\end{proof}

Proposition \ref{Prop-CokernelDP} shows that $DP(F)$ is surjective if the obstruction space $\mathcal{H}$ vanishes. 
Because the vanishing of $\mathcal{H}$ is an open condition, $DP(\widetilde{F})$ is then surjective for all embeddings $\widetilde{F}$ in an open neighbourhood of $F$ in $\mathcal{E}(M,Z)$. To apply the Nash--Moser implicit function theorem, we need to carefully check how the norm of a right inverse depends on the parameter.
This will prove the existence part in Theorem \ref{mainTHM}.

\begin{proposition}
Suppose that $\mathcal{H}=0$ 
for the $\SU(2)$-structure induced by the strongly pseudoconvex embedding $F\in\mathcal{E}(M,Z)$. Then the map $P$ is surjective onto an open neighbourhood of $\psi$.
\end{proposition}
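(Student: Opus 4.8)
The plan is to deduce the statement from the Nash--Moser implicit function theorem in the form developed by Hamilton \cite{HamiltonIFT}, using the uniform estimates of Section \ref{section-uniform-estimates} to produce a \emph{tame family} of right inverses for $DP$. For the set-up: $\mathcal{E}_0(M,Z)$ is a tame Fr\'echet manifold, and $\psi+d\Omega^2(M)$ with the topology induced from $\Omega^3(M)$ is a tame Fr\'echet space, since $d\Omega^2(M)$ is a complemented closed subspace of $\Omega^3(M)$ by Hodge theory. The map $P$ is smooth and tame: in the chart around $F$ it is the composition of the exponential map, the embedding, and pull-back of the fixed form $\Psi$, all of which are smooth tame operations \cite[Part~II]{HamiltonIFT}. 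By Proposition \ref{Prop-CokernelDP}, $\mathcal{H}=0$ forces $DP(F)$ to be surjective, and since the vanishing of $\mathcal{H}$ is an open condition, $DP(\widetilde F)$ is surjective for every $\widetilde F$ in some neighbourhood $\mathcal{U}$ of $F$ in $\mathcal{E}_0(M,Z)$.

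Next I would construct an explicit right inverse $Q(\widetilde F)\colon d\Omega^2(M)\rightarrow \mathcal{C}^\infty(M,F^*TZ)$ of $DP(\widetilde F)$ for each $\widetilde F\in\mathcal{U}$ by running the proof of Proposition \ref{Prop-CokernelDP} with $F$ replaced by $\widetilde F$. Each $\widetilde F$ induces its own contact distribution, metric on $H$ and bundle of self-dual $2$-forms; for $\widetilde F$ close to $F$ these are small perturbations of those of $F$, and after a tame identification of $H_{\widetilde F}$ with the fixed $H$ and the normalisation of Section \ref{section-structure-in-dim5}, the perturbed self-dual bundle is the graph of a small $\mu=\mu(\widetilde F)$ over the fixed $\Lambda^+_H$, with $\mu(\widetilde F)$ depending tamely on $\widetilde F$. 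In the construction the only step that loses derivatives is the solution of the perturbed equation \eqref{crucial-equ}, which is carried out by the right inverse $R_\mu=D_\mu^*E_\mu^{-1}$ of Corollary \ref{corollary-uniform_estimate_D}; the remaining operations---the algebraic splittings into self-dual/anti-self-dual and $(\mathbb{R}\omega)$-components and the subtraction of $d(\eta+h_\omega\theta)$---involve only algebra and first-order differentiation, hence are tame of lower order.

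It then remains to check tameness and conclude. The assignment $(\widetilde F,d\sigma)\mapsto Q(\widetilde F)(d\sigma)$ depends on finitely many derivatives of $\widetilde F$, and the uniform estimate \eqref{uniform-right-inv-estimate} of Corollary \ref{corollary-uniform_estimate_D} gives precisely a tame estimate for $R_\mu$, with a fixed loss of derivatives and the admissible $[[\mu]]_{k+3}$-type dependence on the parameter; combining this with the second Moser estimate and the tame dependence $\widetilde F\mapsto\mu(\widetilde F)$ shows that $Q$ is a smooth tame family of right inverses for $DP$ on $\mathcal{U}$. Hamilton's Nash--Moser theorem for smooth tame families of right inverses \cite[Part~III]{HamiltonIFT} then yields that $P$ maps a neighbourhood of $F$ onto a neighbourhood of $\psi$, which is the assertion.

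The main obstacle is the one already isolated in Section \ref{section-uniform-estimates}: the inverse of the subelliptic operator $E_\mu$ loses half a derivative, and its dependence on the parameter must be controlled with exactly the right number of derivatives---an estimate of the shape $\|\sigma\|_{k+1/2}\lesssim\|E_\mu\sigma\|_k+([[\mu]]_{2k}+1)\|\sigma\|_2$ would be useless for Nash--Moser \cite[Counterexample I.5.5.4]{HamiltonIFT}. Corollary \ref{corollary-uniform_estimate_D}, which rests on Hamilton's version \cite{Hamilton1977Deformation2} of the Kohn--Nirenberg higher order estimates, is exactly what overcomes this. Once it is in hand, the remaining points---that the moving contact structure can be encoded by the single parameter $\mu$ over a fixed background after a tame identification, and that the non-subelliptic steps in the construction of $Q$ are tame---are routine bookkeeping.
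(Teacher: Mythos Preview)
Your outline is correct and follows the same strategy as the paper: construct the right inverse explicitly by rerunning the proof of Proposition \ref{Prop-CokernelDP} with the subelliptic step handled by $R_{\mu}$ and the estimate \eqref{uniform-right-inv-estimate}, then apply Hamilton's Nash--Moser theorem \cite[III.1.1.3]{HamiltonIFT}. The paper is more explicit about your ``tame identification of $H_{\widetilde F}$ with $H$'': it splits into three cases (same $\theta$; same $H$ but different $\theta$; different $H$), and in the last case invokes Gray's contact stability theorem together with the tameness of the resulting diffeomorphism \cite[Theorem III.2.4.6]{HamiltonIFT} and of composition \cite[II.4.4.5]{HamiltonIFT} --- this is a genuine ingredient you should name rather than absorb into ``routine bookkeeping''.
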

\begin{proof}
The conditions in Definition \ref{Def-pseudoconvex} depend on two derivatives of $F$. Thus 
for any $\widetilde{F}$ which is $\mathcal{C}^2$-close to $F$ the induced 3-form $\tilde{\psi}:=P(\widetilde{F})$ is strongly pseudoconvex as well.
Denote by $(\tilde{\theta},\tilde{\omega},\tilde{\alpha},\tilde{\beta})$ the $\SU(2)$-structure induced by $\widetilde{F}$.
We observe here that this $\SU(2)$-structure depends on $\widetilde{F}$ and its first three derivatives. One derivative to determine $\tilde{\psi}$. Another derivative is needed to normalise as in section \ref{section-structure-in-dim5}. Third derivatives come in by computing $\omega=d\theta$.

To describe a right inverse for $DP(\widetilde{F})$, we will distinguish three cases:
\begin{compactenum}[(1)]
\item
$\widetilde{H}=H$ and $\tilde{\theta}=\theta$ (and thus $\tilde{v}=v$).
\item
$\widetilde{H}=H$ but $\tilde{\theta}\neq\theta$ (and thus $\tilde{v}\neq v$).
\item
The contact distributions $\widetilde{H}$ and $H$ are different.
\end{compactenum}
We start by explaining case (1). In this case the splitting \eqref{splitting-forms} remains the same. 
However, $(\omega,\tilde{\alpha},\tilde{\beta})$ defines a new bundle of self-dual $2$-forms, so the splitting \eqref{splitting-2-forms} changes. 
If $\widetilde{F}$ is sufficiently close to $F$, then as in the beginning of Section \ref{section-uniform-estimates} we can write $\SPAN\{\omega,\tilde{\alpha},\tilde{\beta}\}=\Lambda^+_{H,\mu}$ for a map $\mu:\Lambda^+_H\rightarrow \Lambda_H^-$. 
To construct a right inverse $VP(\widetilde{F})$ of $DP(\widetilde{F})$, let $\dot{\psi}\in\mathcal{G}(M)=d\Omega^2(M)$. 
By using the Hodge decomposition with respect to some background Riemannian metric, we find a preferred primitive $d\sigma=\dot{\psi}$. More precisely, $\sigma=d^*\xi$ where $\xi$ is the unique solution of $\Delta \xi = \dot{\psi}$. In particular we have $\|\sigma\|_{s+1} \lesssim \|\dot{\psi}\|_s$ for all $s$. 
To construct a pre-image of $d\sigma$ with respect to the map $DP(\widetilde{F})$, we proceed as in the proof of Proposition \ref{Prop-CokernelDP}. 
Decompose $\sigma$ as
\begin{align*}
\sigma=\sigma_H +\chi\wedge\theta=\sigma_+ + \sigma_-+\chi\wedge\theta, \end{align*}
where $\sigma_H\in\Omega_H^2$, $\sigma_{\pm} = \pi_{\pm}(\sigma_H)\in\Omega^{\pm}_H$ and $\chi\in\Omega_H^1$.
In place of \eqref{crucial-equ}, we now need to solve the perturbed equation
\begin{align*}
D_{\mu}\eta = \sigma_-,
\end{align*}
where $D_{\mu}$ is the operator from Section \ref{section-uniform-estimates}. Under the hypothesis $\mathcal{H}=0$ we have the solution $R_{\mu}\sigma_-$
if $\widetilde{F}$ is in a sufficiently small neighbourhood of $F$. 

Denote by $L_{1}(\widetilde{F})$ the projection of $\Lambda_H^2$ to $\mathbb{R}\omega$, by $L_{2}(\widetilde{F})$ the projection to $\mathbb{R}\tilde{\alpha}$ and by $L_3(\widetilde{F})$ the projection to $\mathbb{R}\tilde{\beta}$.
$\tilde{\alpha}$ defines an isomorphism $H \rightarrow \Lambda_H^1$ via $w\mapsto w\lrcorner \tilde{\alpha}$. Denote the inverse by $L_4(\widetilde{F})$.

Tracing through the proof of Proposition \ref{Prop-CokernelDP}, we find $\gamma\in\Omega^1(M)$, such that 
\begin{align*}
\sigma - d\gamma = K(\widetilde{F})(h_{\alpha}, h_{\beta}, w_H),
\end{align*}
where
\begin{align*}
\sigma 
&= 
d^*\Delta^{-1}\dot{\psi},
\\
h_{\omega}
&=L_1(F)(\sigma_H-d_H R_{\mu} \sigma_-),
\\
h_{\alpha}
&=
L_2(\widetilde{F})(\sigma_H-d_H R_{\mu}\sigma_-),
\\
h_{\beta}
&=
L_3(\widetilde{F})(\sigma_H-d_H R_{\mu}\sigma_-),
\\
w_H
&=
L_4(\widetilde{F})\{\chi+\mathcal{L}_v R_{\mu}\sigma_- - d_H h_{\omega}\}.
\end{align*}
We define the right-inverse $VP(\widetilde{F})$ of $DP(\widetilde{F})$ to be
\begin{align*}
VP(\widetilde{F})\dot{\psi}:=(h_{\alpha},h_{\beta},w_H).
\end{align*}
Next we need to show that the map $VP(\widetilde{F})$ satisfies a tame estimate. 
Because the $\SU(2)$-structure depends on three derivatives of $\widetilde{F}$, the operators $L_i(\widetilde{F})\sigma$, $i=1, \dots, 4$, also depend on 
three derivatives of $\widetilde{F}$,
and are linear and of order $0$ in $\sigma$. By \eqref{2nd-Moser_estimate} for each $k\in\mathbb{N}$ we have under a $\mathcal{C}^3$-bound on $\widetilde{F}$ 
\begin{align}
\|L_i(\widetilde{F})\sigma\|_k 
\lesssim
\|\sigma\|_k + [[u(\widetilde{F})]]_{k+3} \|\sigma\|.
\label{Lambda+Moser}
\end{align}
Together with \eqref{uniform-right-inv-estimate} this gives
\begin{gather*}
\| h_{\alpha}\|_{k}
=
\|L_2(\widetilde{F})(\sigma_H-d_H R_{\mu}\sigma_-)\|_{k}
\\
\lesssim
\|\sigma\|_k + \|R_{\mu}\sigma_-\|_{k+1}
+
[[u(\widetilde{F})]]_{k+3} (\|\sigma\| + \|R_{\mu}\sigma_-\|_1)
\\
\lesssim
\|\sigma\|_{k+2}
+
([[\mu]]_{k+4}+1)\|\sigma\|_l
+
[[u(\widetilde{F})]]_{k+3} \|\sigma\|_l
\\
\lesssim
\|\dot{\psi}\|_{k+1}+([[u(\widetilde{F})]]_{k+7}+1) \|\dot{\psi}\|_{l-1}.
\end{gather*}
We get analogous estimates for $h_{\beta}$ and $h_{\omega}$. 
Thus we get
\begin{gather*}
\|\chi''\|_k
\lesssim
\|\chi\|_k + \|R_{\mu}\sigma_-\|_{k+1} + \|h_{\omega}\|_{k+1}
\\
\lesssim
\|\dot{\psi}\|_{k+2}+([[u(\widetilde{F})]]_{k+8}+1) \|\dot{\psi}\|_{l-1}.
\end{gather*}
Similarly to \eqref{Lambda+Moser} under a $\mathcal{C}^3$-bound on $F$ we have
\begin{align*}
\|L_4(\widetilde{F})\chi\|_k 
\lesssim
\|\chi\|_k + [[u(\widetilde{F})]]_{k+3} \|\chi\|.
\end{align*}
Using this gives
\begin{gather*}
\|w_H\|_{k}
\lesssim
\|\chi''\|_k + [[u(\widetilde{F})]]_{k+3} \|\chi''\|
\\
\lesssim
\|\dot{\psi}\|_{k+2}++([[u(\widetilde{F})]]_{k+8}+1) \|\dot{\psi}\|_{l-1}
+ 
[[u(\widetilde{F})]]_{k+3}
\|\dot{\psi}\|_{l-1}
\\
\lesssim
\|\dot{\psi}\|_{k+2}+([[u(\widetilde{F})]]_{k+8}+1) \|\dot{\psi}\|_{l-1},
\end{gather*}
if $\widetilde{F}$ is sufficiently close to $F$.
Thus we get
\begin{align}
\|VP(\widetilde{F})\dot{\psi}\|_{k} 
\lesssim 
\|\dot{\psi}\|_{k+2}+([[u(\widetilde{F})]]_{k+8}+1) \|\dot{\psi}\|_{l-1}.
\label{tame_estimate}
\end{align} 
Case (2):  In this case the splitting \eqref{splitting-forms} changes and $\SPAN\{\tilde{\omega},\tilde{\alpha},\tilde{\beta}\}$ is not a subspace of $\Lambda_H^2$ any more. However, instead of normalsing as in section \ref{section-structure-in-dim5}, we still can write
\begin{align*}
\widetilde{F}^*\Psi = \theta\wedge\alpha',
\quad
\widetilde{F}^*\hat{\Psi} = \theta\wedge\beta',
\end{align*}
where $\alpha'.\alpha'=\beta'.\beta'>0$ and $\omega,\alpha',\beta'$ are mutually orthogonal.
Thus $(\omega,\alpha',\beta')$ spans a positive definite rank $3$ subbundle of $\Lambda_H^2$.
The estimate follows in case (2) if we replace $\SPAN\{\omega,\tilde{\alpha},\tilde{\beta}\}$
by $\SPAN\{\omega,\alpha',\beta'\}$ in the derivation for the estimate in case (1).

Case (3): Now suppose $\widetilde{H}\neq H$. By a result of Gray
\cite{Gray} there exists a diffeomorphism of $M$ such that $\widetilde{H}=\Phi_* H$.
A manifold chart around the identity in the group $\mathcal{D}(M)$ of diffeomorphisms  is given by the inverse $\nu:\mathcal{V}\subset\mathcal{D}(M)\rightarrow\mathcal{C}^{\infty}(M,TM)$ of the exponential map. 
The map $\widetilde{H}\mapsto \Phi$ is tame \cite[Theorem III.2.4.6]{HamiltonIFT}, which implies that there exists $r$ such that 
\begin{align}
[[\nu(\Phi)]]_n \lesssim [[u(\widetilde{F})]]_{n+r}+1
\label{tame-Gray}
\end{align}
for all $n$.
Furthermore, the composition 
\begin{gather*}
\mathcal{E}(M,Z)\times\mathcal{D}(M) \rightarrow \mathcal{E}(M,Z),
\\
(\widetilde{F},\Phi)\mapsto \widetilde{F}\circ \Phi
\end{gather*}
is a smooth tame map \cite[II.4.4.5]{HamiltonIFT}, so there exists an 
$s$ such that 
\begin{align}
[[u(\widetilde{F}\circ \Phi)]]_n \lesssim [[\nu(\Phi)]]_{n+s} + [[u(\widetilde{F})]]_{n+s}+1 \lesssim [[u(\widetilde{F})]]_{n+r+s}+1.
\label{tame-composition}
\end{align}
By the discussion below definition \ref{Def-pseudoconvex}, $(\widetilde{F}\circ \Phi)^*\Psi = \Phi^*\psi$ is a strongly pseudoconvex $3$-form which determines the contact distribution $\Phi^{-1}_*\widetilde{H}= H$. Furthermore, if $\widetilde{F}$ is sufficiently close to $F$, then $\Phi$ must be sufficiently close to the identity. Thus we can apply the result from cases (1) and (2) to $\widetilde{F}\circ \Phi$, i.e. that there exists a right inverse $VP(\widetilde{F}\circ \Phi)$ to $DP(\widetilde{F}\circ \Phi)$ which satisfies the estimate \eqref{tame_estimate}. The calculation
\begin{align*}
d(X\lrcorner \Phi^*\psi) = d(\Phi^*((\Phi_*X)\lrcorner \psi)) = \Phi^*d((\Phi_*X)\lrcorner\psi))
\end{align*}
shows that $VP(\widetilde{F}):=\Phi_*\circ VP(\widetilde{F}\circ \Phi)\circ \Phi^*$ is a right inverse for $DP(\widetilde{F})$.
A repeated application of the second Moser estimate and  \eqref{tame_estimate} 
gives
\begin{align*}
&\|VP(\widetilde{F})\dot{\psi}\|_k
=
\|\Phi_* VP(\widetilde{F}\circ \Phi)\Phi^*\dot{\psi}\|_k
\\
\lesssim
&
\|VP(\widetilde{F}\circ \Phi)\Phi^*\dot{\psi}\|_k
+
[[\nu(\Phi)]]_{k+1} \|VP(\widetilde{F}\circ \Phi)\Phi^*\dot{\psi}\|
\\
\lesssim
&
\|\Phi^*\dot{\psi}\|_{k+2}+ ([[u(\widetilde{F}\circ \Phi)]]_{k+8}+1)
\|\Phi^*\dot{\psi}\|_{l-1}
\\
&+
[[\nu(\Phi)]]_{k+1}
(\|\Phi^*\dot{\psi}\|_{2}+ ([[u(\widetilde{F}\circ \Phi)]]_{8}+1)
\|\Phi^*\dot{\psi}\|_{l-1})
\\
\lesssim
&
\|\dot{\psi}\|_{k+2}
+[[\nu(\Phi)]]_{k+3} \|\dot{\psi}\|
+
([[u(\widetilde{F}\circ \Phi)]]_{k+8}+1)
(\|\dot{\psi}\|_{l-1} + [[\nu(\Phi)]]_{l} \|\dot{\psi}\|)
\\
&+
[[\nu(\Phi)]]_{k+1}
(\|\dot{\psi}\|_{2}+[[\nu(\Phi)]]_3 \|\dot{\psi}\| + ([[u(\widetilde{F}\circ \Phi)]]_{8}+1)
(\|\dot{\psi}\|_{l-1}+[[\nu(\Phi)]]_l \|\dot{\psi}\|)).
\end{align*} 
With the observations \eqref{tame-Gray} and \eqref{tame-composition}
for $\widetilde{F}$ in a sufficiently small neighbourhood of $F$ 
we get
\begin{align}
\|VP(\widetilde{F})\dot{\psi}\|_k 
\lesssim
\|\dot{\psi}\|_{k+2} + ([[u(\widetilde{F})]]_{k+d}+1) \|\dot{\psi}\|_{l-1}
\label{tame-estimate-final}
\end{align}
for some $d$.
Putting together the results for the three cases, we have constructed a right inverse $VP(\widetilde{F})$ of $DP(\widetilde{F})$ for all $\widetilde{F}$ in some neighbourhood of $F$ in $\mathcal{E}(M,Z)$ which for all $k$ satisfies an estimate of the form \eqref{tame-estimate-final}. 
The result follows from \cite[III.1.1.3]{HamiltonIFT}
\end{proof}

\subsubsection*{Local uniqueness}

To complete the proof of Theorem \ref{mainTHM}, we are left to prove the local uniqueness statements for embeddings into $\mathbb{C}^3$. We start by proving local uniqueness for Problem \ref{CalabiProblem}.

\begin{proposition}
\label{uniqueness-Calabi}
Let $(\theta,\omega,\alpha,\beta)$ be a contact hyperk\"ahler $\SU(2)$-structure  on $M$ with $\mathcal{H}=0$. Then there exists a neighbourhood of $\beta$ in $\Omega^2_H$ in which $\beta$ is the only solution of the system
\begin{align*}
\omega.\tilde{\beta}=0,
\quad
\alpha.\tilde{\beta}=0,
\quad
\tilde{\beta}.\tilde{\beta}=1,
\quad
d_H\tilde{\beta}=0.
\end{align*}
\end{proposition}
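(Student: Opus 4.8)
The plan is to repackage the four equations as a single nonlinear equation $G(\sigma)=0$ for a section $\sigma$ of $\Lambda^-_H$, set up so that $DG(0)=\Box_H$, and then invoke the Nash--Moser inverse function theorem in the same spirit as the previous proposition.

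\emph{Reduction.} I would first decompose an unknown $\tilde\beta\in\Omega^2_H$ with respect to the \emph{fixed} splitting $\Lambda^2_H=\Lambda^+_H\oplus\Lambda^-_H$ of the background structure, $\tilde\beta=p\,\omega+q\,\alpha+r\,\beta+\sigma$ with $\sigma\in\Omega^-_H$. Since $\omega,\alpha,\beta$ are $\wedge$-orthonormal and $\sigma.\sigma=-|\sigma|^2$, the algebraic conditions $\omega.\tilde\beta=\alpha.\tilde\beta=0$, $\tilde\beta.\tilde\beta=1$ are equivalent to $p=q=0$ and $r^2=1+|\sigma|^2$, and for $\tilde\beta$ close to $\beta$ they force $r=\sqrt{1+|\sigma|^2}$. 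Thus $\tilde\beta\mapsto\sigma:=\pi_-\tilde\beta$ is an injection from the set of solutions near $\beta$ into a neighbourhood of $0$ in $\Omega^-_H$, with inverse $\sigma\mapsto\sqrt{1+|\sigma|^2}\,\beta+\sigma$. Using $d_H\beta=0$, the remaining equation $d_H\tilde\beta=0$ becomes $d_H\sigma=-\bigl(d_H\sqrt{1+|\sigma|^2}\bigr)\wedge\beta$; applying the Hodge star $*_H$ of $H$ and the identity $d_H^*\sigma=*_Hd_H\sigma$ for anti-self-dual $\sigma$ (Lemma~\ref{Lemma-adjoint}), this reads $d_H^*\sigma=N(\sigma)$, where $N(\sigma):=-*_H\bigl((d_H\sqrt{1+|\sigma|^2})\wedge\beta\bigr)$ is a first order nonlinear differential operator in $\sigma$, smooth and tame, with $N(0)=0$ and $DN(0)=0$ since $\sqrt{1+|\sigma|^2}-1$ vanishes to second order; pointwise $N(\sigma)=O(|\sigma|\,|\nabla\sigma|)$.

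\emph{The map $G$.} I would then set $G(\sigma):=\Box_H\sigma-d_H^-N(\sigma):\Omega^-_H\to\Omega^-_H$, a smooth tame map near $0$. Applying $d_H^-$ to $d_H^*\sigma=N(\sigma)$ shows every solution gives a zero of $G$, so by the injection above it suffices to prove that $0$ is isolated in $G^{-1}(0)$. The linearisation is $DG(0)=\Box_H$, which because $\mathcal H=\ker\Box_H=0$ is, by \eqref{Hodge_ASD}, a bijection of $\Omega^-_H$, tame via the higher order estimates \eqref{higher-order-apriori-estimate}. For $\sigma$ small, $d_H^-\circ DN(\sigma)$ is a second order operator built only from $H$-derivatives, with principal symbol $O(\|\sigma\|_{\mathcal C^0})$ and lower order coefficients $O(\|\sigma\|_{\mathcal C^1})$; running the Weitzenb\"ock argument of Proposition~\ref{Proposition-1/2-estimate-Q} through, the term $(d_H^-DN(\sigma)\phi,\phi)$ is absorbed into $\tfrac14\|\nabla_H\phi\|^2+C\|\phi\|^2$, so $DG(\sigma)$ satisfies the uniform estimate $\|\phi\|_{1/2}^2\lesssim\|DG(\sigma)\phi\|^2+\|\phi\|^2$. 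As in Section~\ref{section-subelliptic}, this estimate, openness of the vanishing of the kernel, and stability of the (zero) index give that $DG(\sigma)$ is a bijection for all small $\sigma$, and running the uniform Kohn--Nirenberg machinery of Section~\ref{section-uniform-estimates} for this perturbation in place of $E_\mu$ shows $\sigma\mapsto DG(\sigma)^{-1}$ is a smooth tame family. The Nash--Moser inverse function theorem \cite[Theorem III.1.1.1]{HamiltonIFT} then makes $G$ a local tame diffeomorphism near $0$; hence $G^{-1}(0)=\{0\}$ near $0$, and pulling back along $\tilde\beta\mapsto\pi_-\tilde\beta$ this says $\beta$ is the only solution of the system in a neighbourhood of $\beta$.

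\emph{Main obstacle.} The reduction and the symbol bookkeeping are routine; the real work is the assertion that $DG(\sigma)$ stays subelliptic with estimates uniform in $\sigma$ and that its inverses form a smooth tame family. This is a repetition of Hamilton's uniform analysis in Section~\ref{section-uniform-estimates} --- the uniform $1/2$-estimate, the uniform higher order estimates, and the contradiction argument for the Poincar\'e-type inequality --- now for the additive second order perturbation $d_H^-\circ DN(\sigma)$ rather than the metric perturbation $E_\mu$, with the $\mathcal C^k$-norms of $\sigma$ in the role of the norms of $\mu$; one must check that each of those arguments goes through essentially verbatim.
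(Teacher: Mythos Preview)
Your approach is correct in outline but far heavier than needed. You keep the background splitting $\Lambda^2_H=\Lambda^+_H\oplus\Lambda^-_H$ fixed, reduce the system to a nonlinear equation $G(\sigma)=\Box_H\sigma-d_H^-N(\sigma)=0$ for $\sigma=\pi_-\tilde\beta$, and then invoke Nash--Moser --- which, as you acknowledge, obliges you to rerun the entire uniform subelliptic analysis of Section~\ref{section-uniform-estimates} for a new perturbation family. The paper bypasses all of this with one algebraic observation: writing $\tilde\beta=h\beta+\tilde\beta_-$ with $\tilde\beta_-.\tilde\beta_-=1-h^2$, one checks directly that the difference $\gamma:=\tilde\beta-\beta=(h-1)\beta+\tilde\beta_-$ is wedge-orthogonal to each of $\omega$, $\alpha$, and $(1+h)\beta+\tilde\beta_-$; hence $\gamma$ is anti-self-dual for the \emph{perturbed} splitting with $\Lambda^+_{H,\mu}=\SPAN\{\omega,\alpha,(1+h)\beta+\tilde\beta_-\}$. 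Since $d_H\gamma=d_H\tilde\beta-d_H\beta=0$, this places $\gamma$ in $\ker\Box_{H,\mu}$, and the openness of the vanishing of $\mathcal H$ already established in Section~\ref{section-subelliptic} forces $\gamma=0$ immediately. Your route does yield more --- a genuine local diffeomorphism statement for $G$ --- but the paper's trick is to let the \emph{unknown} $\tilde\beta$ itself determine the perturbed metric, so that the difference lands exactly in a space already known to be zero, and no inverse function theorem is needed at all. (Even within your set-up Nash--Moser can be avoided: from $d_H^*\sigma=N(\sigma)$, the pointwise bound $|N(\sigma)|\lesssim|\sigma|\,|\nabla_H\sigma|$, the Weitzenb\"ock estimate $\|\nabla_H\sigma\|^2\lesssim\|d_H^*\sigma\|^2+\|\sigma\|^2$, and the spectral gap $\|\sigma\|\lesssim\|d_H^*\sigma\|$ from $\mathcal H=0$ combine to give $\|d_H^*\sigma\|\le C\|\sigma\|_{L^\infty}\|d_H^*\sigma\|$, forcing $\sigma=0$ for $\|\sigma\|_{L^\infty}$ small.)
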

\begin{proof}
Suppose there is another solution $\tilde{\beta}$. Then the first two equations imply that we can decompose $\tilde{\beta}=h\beta + \tilde{\beta}_-$, where $\tilde{\beta}_-\in\Omega^-_H$.
The third condition gives $\tilde{\beta}_-.\tilde{\beta}_-=1-h^2$.
If we choose the neighbourhood of $\beta$ sufficiently small such that $h > -1$, then $\SPAN\{\omega,\alpha,(1+h)\beta+\tilde{\beta}_-\}$ is a positive definite subspace which defines a perturbation $\Lambda^+_{H,\mu}$ of $\Lambda^+_H$. It is constructed such that the difference $\gamma:= \tilde{\beta}-\beta$ is a section of $\Lambda^-_{H,\mu}$. The last equation implies  $d_H\gamma=0$ and thus $\gamma\in \ker\Box_{H,\mu}$. Because the vanishing of $\ker\Box$ is an open condition, 
$\gamma$ must vanish if $\mu$ is sufficiently small.
\end{proof}

Suppose that $\widehat{F}$ is another embedding of $M$ into $\mathbb{C}^3$
which is close to $F$ and satisfies $\widehat{F}^*\Psi=F^*\Psi=\psi$. Then by Proposition \ref{uniqueness-Calabi} we also have $\widehat{F}^*\hat{\Psi}=F^*\hat{\Psi}$. Denote by $\phi$ the diffeomorphism $\widehat{F}\circ F^{-1}$ from $F(M)$ to $\widehat{F}(M)$. Because $\phi$ preserves $\alpha+i\beta$ it is an isomorphism of CR-manifolds. In particular, the component functions of $\phi=(\phi_1,\phi_2,\phi_3)$ are CR functions, i.e. are annihilated by $\bar{\partial}_b$. Because $F(M)$ is strongly pseudoconvex, $\phi_1,\phi_2$ and $\phi_3$ extend to holomorphic functions
$\Phi_1,\Phi_2,\Phi_3$ on the domain $U$ which is bounded by $F(M)$ \cite[Theorem 5.3.2]{FollandKohn}. We are left to prove that $\Phi=(\Phi_1,\Phi_2,\Phi_3)$ preserves $\Psi$. We can write $\Phi^*(\Psi+i\hat{\Psi})=h (\Psi+i\hat{\Psi})$ for some holomorphic function on $U$. In particular, the real and imaginary parts of $h$ are harmonic with respect to the Euclidean metric on $\mathbb{C}^3$. On the boundary we have $\Re h =1$ and $\Im h =1$. By the maximum principle we have $\Re h =1$ and $\Im h =1$ on all of $U$. Thus $\Phi^*(\Psi+i\hat{\Psi})=\Psi+i\hat{\Psi}$.

\appendix

\section{Vanishing of cohomology for pseudoconvex submanifolds of Stein Manifolds}

\label{appendix-vanishing}

Let $Z$ be a complex manifold with complex dimension $n$ and boundary $M:=\partial Z$. Denote by $\mathcal{A}^{p,q}(Z)$ the space of $(p,q)$-forms on $Z$ which are smooth up to the boundary.
Choose a hermitian metric on $Z$, which then also induces a hermitian metric on the bundles $\Lambda^{p,q}Z$.
Denote by $\Lambda^{p,q}Z|_M$ the restriction of the bundle $\Lambda^{p,q}Z$ to $M$, i.e. the collection of the spaces $\Lambda^{p,q}Z|_z$, where $z$ varies over $M$. 
A $(p,q)$-form at $z\in M$ on the boundary is called \textit{complex normal}
if it is of the form $\bar{\partial}r\wedge \zeta$, where $\zeta\in\Lambda^{p,q-1}Z|_M$ and $r$ is a boundary defining function in a neighbourhood of $z$.
Denote by $\Lambda_N^{p,q}M|_z \subset \Lambda^{p,q}Z|_z$ the space of 
all complex normal $(p,q)$-forms at $z$. 
$\Lambda_N^{p,q}M|_z$ does not depend on the choice of $r$.
Denote by $\Lambda^{p,q}_T M|_z$ the orthogonal complement of $\Lambda_N^{p,q}M|_z$ in $\Lambda^{p,q}Z|_z$.
Forms in $\Lambda_T^{p,q}M|_z$ are called \textit{complex tangential}.
We note that $\Lambda_N^{p,q}M|_z$ is not a subspace of $\Lambda^{p+q} T^*_z M \otimes \mathbb{C}$.
By $\Lambda^{p,q}_N M$ and $\Lambda^{p,q}_T M$ we denote the bundles over $M$ given by the union of the $\Lambda^{p,q}_N M|_z$ and $\Lambda^{p,q}_T M|_z$, respectively. We will denote the spaces of smooth sections of 
$\Lambda^{p,q}_N M$ and $\Lambda^{p,q}_T M$ by 
$\mathcal{A}^{p,q}_N(M)$ and $\mathcal{A}^{p,q}_T(M)$, respectively.
$\mathcal{A}^{p,q}_N(Z)$ are those $(p,q)$-forms on $Z$ which restricted to the boundary lie in $\mathcal{A}^{p,q}_N(M)$, and 
$\mathcal{A}^{p,q}_T(Z)$ are those $(p,q)$-forms on $Z$ which restricted to the boundary lie in $\mathcal{A}^{p,q}_T(M)$.

Because of $\bar{\partial}^2=0$ we have for each $p=0,\dots, n$ a chain complex $(\mathcal{A}^{p,q}(Z),\bar{\partial})_q$, whose cohomology groups we denote by $H^{p,q}(Z)$. A form in $\mathcal{A}^{p,q}_N(Z)$ can be written as $\bar{\partial}r\wedge \sigma + r\xi$, where $\sigma\in\mathcal{A}^{p,q-1}(Z)$ and $\xi\in\mathcal{A}^{p,q}(Z)$. The calculation
\begin{align}
\bar{\partial}(\bar{\partial}r\wedge \sigma + r\xi)
=
\bar{\partial}r\wedge(\xi-\bar{\partial}\sigma) + r \bar{\partial}\xi
\label{del_bar_preserves_N}
\end{align}
shows that $\bar{\partial}$ maps $\mathcal{A}^{p,q}_N(Z)$ to 
$\mathcal{A}^{p,q+1}_N(Z)$. Denote the cohomology groups of the complex
$(\mathcal{A}^{p,q}_N(Z),\bar{\partial})_q$ by $H^{p,q}_N(Z)$.

Next we define the $\bar{\partial}_b$-complex on the boundary $M$.
If $\zeta\in\mathcal{A}^{p,q}_T(M)$, let $\hat{\zeta}$ be any extension to $\mathcal{A}^{p,q}(Z)$. Set $\bar{\partial}_b \zeta:=\pi^T_M(\bar{\partial}\hat{\zeta})$, where $\pi^T_M$ denotes the projection from $\mathcal{A}^{p,q}(Z)$ to $\mathcal{A}^{p,q}_T(M)$.
Any two extensions of $\zeta$ differ by an element in $\mathcal{A}^{p,q}_N(Z)$. 
By \eqref{del_bar_preserves_N} $\pi^T_M\circ \bar{\partial}$ vanishes on 
$\mathcal{A}^{p,q}_N(Z)$, so that $\bar{\partial}_b$ is well-defined. $\bar{\partial}^2=0$ implies $\bar{\partial}_b^2=0$, and thus we get a chain complex $(\mathcal{A}^{p,q}_T(M),\bar{\partial}_b)_q$, whose cohomology groups we denote by $H^{p,q}(M)$. By the construction of $\bar{\partial}_b$ we have a short exact sequence of chain complexes
\begin{center}
\begin{tikzcd}[row sep=scriptsize, column sep=scriptsize]
0 \arrow[r] 
&
\mathcal{A}^{p,q+1}_N(Z)
\arrow[r,hookrightarrow] 
&
\mathcal{A}^{p,q+1}(Z)
\arrow[r,"\pi^T_M"] 
&
\mathcal{A}^{p,q+1}_T(M)
\arrow[r] 
&
0
\\
0 \arrow[r] 
&
\mathcal{A}^{p,q}_N(Z)
\arrow[r,hookrightarrow]
\arrow[u,"\bar{\partial}"] 
&
\mathcal{A}^{p,q}(Z)
\arrow[r,"\pi^T_M"]
\arrow[u,"\bar{\partial}"] 
&
\mathcal{A}^{p,q}_T(M)
\arrow[r] 
\arrow[u,"\bar{\partial}_b"]
&
0
\end{tikzcd}
\end{center}
This induces a long exact sequence of cohomology groups
\begin{center}
\begin{tikzcd}
\cdots 
\arrow[r]
&
H^{p,q}_N(Z)
\arrow[r]
&
H^{p,q}(Z)
\arrow[r]
&
H^{p,q}(M)
\arrow[r]
&
H^{p,q+1}_N(Z)
\arrow[r]
&
\cdots
\end{tikzcd}
\end{center}

\begin{proposition}\cite[Proposition 5.1.5]{FollandKohn}
If the Levi form of $M$ has at least $n-q$ positive eigenvalues or at least $q+1$ negative eigenvalues (certainly true on a strongly pseudoconvex domain), then $H^{p,q}(Z)\cong H^{n-p,n-q}_N(Z)^*$.
\end{proposition}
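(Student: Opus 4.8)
The assertion is a version of Serre duality for the $\bar{\partial}$-complex on a manifold with boundary, and the natural plan is to produce the bilinear pairing
\begin{align*}
H^{p,q}(Z)\times H^{n-p,n-q}_N(Z)\longrightarrow \mathbb{C},
\qquad
\bigl([\phi],[\psi]\bigr)\longmapsto \int_Z \phi\wedge\psi ,
\end{align*}
in which $\phi\wedge\psi\in\mathcal{A}^{n,n}(Z)$ is a top form, and to show it is a perfect pairing of finite-dimensional spaces. First I would check it descends to cohomology. Replacing $\phi$ by $\phi+\bar{\partial}\eta$ with $\eta\in\mathcal{A}^{p,q-1}(Z)$, Stokes' theorem together with $\bar{\partial}\psi=0$ leaves the boundary term $\pm\int_M \iota^*(\eta\wedge\psi)$, where $\iota\colon M\hookrightarrow Z$; writing $\psi|_M=\bar{\partial}r\wedge\zeta$, this integrand is $\pm\iota^*(\bar{\partial}r\wedge\eta\wedge\zeta)$, and since $\eta\wedge\zeta$ is an $(n,n-2)$-form it carries the full factor $dz^1\wedge\cdots\wedge dz^n$ in any holomorphic chart, so the identities $\iota^*\bar{\partial}r=-\iota^*\partial r$ (from $\iota^*dr=0$) and $\partial r\wedge dz^1\wedge\cdots\wedge dz^n=0$ force the integrand to vanish. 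The same computation, using a complex-normal primitive $\mu$, shows independence of the representative $\psi$. This Stokes computation is the only genuinely hands-on step.

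Non-degeneracy is where the analysis enters. Under the stated hypothesis on the Levi form, the complex Laplacian $\Box=\bar{\partial}\bar{\partial}^*+\bar{\partial}^*\bar{\partial}$ on $(p,q)$-forms with the $\bar{\partial}$-Neumann boundary conditions satisfies the basic estimate, hence has closed range and finite-dimensional kernel $\mathcal{H}^{p,q}$, and the associated Hodge decomposition, valid for forms smooth up to the boundary, gives $H^{p,q}(Z)\cong\mathcal{H}^{p,q}$. Let $\star\colon\Lambda^{p,q}\to\Lambda^{n-p,n-q}$ be the conjugate-linear Hodge operator fixed by $\phi\wedge\star\psi=\langle\phi,\psi\rangle\,\Vol$. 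Exactly as in the proof of Lemma \ref{Lemma-adjoint} one has $\bar{\partial}^*=-\star\,\bar{\partial}\,\star$ pointwise, so $\star$ interchanges $\bar{\partial}$-closed and $\bar{\partial}^*$-closed forms; moreover $\star$ swaps complex-normal and complex-tangential forms of complementary bidegree, and complex-tangentiality is precisely the $\bar{\partial}$-Neumann boundary condition. Consequently $\star$ carries $\mathcal{H}^{p,q}$ conjugate-linearly onto the harmonic space of the complex $(\mathcal{A}^{\bullet,\bullet}_N(Z),\bar{\partial})$ in bidegree $(n-p,n-q)$, and the very same Hodge decomposition---read off its $\bar{\partial}^*$-half rather than its $\bar{\partial}$-half---identifies that harmonic space with $H^{n-p,n-q}_N(Z)$. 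Thus both sides of the pairing are isomorphic to $\mathcal{H}^{p,q}$, and in particular have equal finite dimension.

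It then remains only to see the pairing is non-degenerate, which is immediate: for $0\neq\phi\in\mathcal{H}^{p,q}$ the form $\star\phi$ is a $\bar{\partial}$-closed complex-normal representative of a class in $H^{n-p,n-q}_N(Z)$ and $\int_Z\phi\wedge\star\phi=\|\phi\|_{L^2}^2>0$. A non-degenerate pairing of finite-dimensional spaces of equal dimension is perfect, so $H^{p,q}(Z)\cong H^{n-p,n-q}_N(Z)^*$.

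The main obstacle is entirely the analytic ingredient used in the middle paragraph: the solvability and, crucially, the boundary regularity of the $\bar{\partial}$-Neumann problem under the stated Levi-form condition, which is what yields a Hodge decomposition valid for forms smooth up to the boundary (and the precise bookkeeping of which such conditions are needed in neighbouring bidegrees). A self-contained treatment would essentially reproduce the relevant chapters of \cite{FollandKohn}; everything else is the Stokes pairing above and the linear-algebra bookkeeping of the normal/tangential splitting under $\star$, which is why the statement is quoted rather than reproved here.
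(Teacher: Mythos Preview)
The paper does not prove this proposition; it is simply quoted from \cite[Proposition 5.1.5]{FollandKohn} and used as a black box in the proof of Corollary~\ref{corollary-Stein-vanishing}. Your sketch is a faithful outline of the standard Serre-duality argument that one finds in Folland--Kohn: the integration pairing, the Stokes computation showing it descends to cohomology via the complex-normal boundary condition, and the use of the $\bar{\partial}$-Neumann Hodge decomposition together with the conjugate-linear Hodge star to exhibit non-degeneracy. You correctly isolate the analytic input---the basic estimate and boundary regularity for $\Box$ under the Levi hypothesis---as the genuine content, which is exactly why the paper cites the result rather than reproving it.
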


\begin{corollary}
\label{corollary-Stein-vanishing}
Let $Z$ be a strongly pseudoconvex domain in a Stein manifold of complex dimension $3$ with boundary $M$. Then $H^{1,1}(M)=0$.
\end{corollary}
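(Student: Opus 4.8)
The plan is to use the long exact sequence in cohomology produced just above together with Proposition \cite[5.1.5]{FollandKohn} and the vanishing of Dolbeault cohomology on Stein manifolds. The target group is $H^{1,1}(M)$, which appears in the long exact sequence in the segment
\[
H^{1,1}(Z) \longrightarrow H^{1,1}(M) \longrightarrow H^{1,2}_N(Z) \longrightarrow \cdots .
\]
So it suffices to show that both $H^{1,1}(Z)=0$ and $H^{1,2}_N(Z)=0$. The first is immediate: $Z$ is a (strongly pseudoconvex, hence relatively compact with smooth strictly pseudoconvex boundary) domain in a Stein manifold, so $Z$ is itself Stein (more precisely, it has a strictly plurisubharmonic exhaustion up to the boundary), and Dolbeault cohomology $H^{p,q}(Z)$ vanishes for all $q\geq 1$ by Cartan's Theorem B / Hörmander's $L^2$ estimates; in particular $H^{1,1}(Z)=0$. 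Here I am using that the $H^{p,q}(Z)$ in the excerpt are computed with forms smooth up to the boundary, for which the same vanishing holds on smoothly bounded strictly pseudoconvex domains.

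For the second vanishing I would invoke the duality in Proposition \cite[5.1.5]{FollandKohn} with $n=3$. Applied with $(p,q)=(2,1)$ it gives $H^{2,1}(Z)\cong H^{1,2}_N(Z)^*$, provided the Levi form hypothesis is met: at $(p,q)=(2,1)$ we need at least $n-q=2$ positive eigenvalues or at least $q+1=2$ negative eigenvalues of the Levi form, and on a strongly pseudoconvex domain all $n-1=2$ eigenvalues are positive, so the hypothesis holds. Since $H^{2,1}(Z)=0$ by the Stein vanishing just used, we conclude $H^{1,2}_N(Z)=0$. Feeding $H^{1,1}(Z)=0$ and $H^{1,2}_N(Z)=0$ into the long exact sequence forces $H^{1,1}(M)=0$.

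The only genuinely delicate point is the assertion that the Dolbeault groups $H^{p,q}(Z)$ — defined here with forms smooth up to the boundary — vanish for $q\geq 1$ when $Z$ is a strongly pseudoconvex domain with smooth boundary in a Stein manifold. This is standard (it follows e.g. from Kohn's solution of the $\bar\partial$-Neumann problem on strictly pseudoconvex domains, which yields solution operators preserving smoothness up to the boundary, or from the fact that such a $Z$ admits a Stein neighbourhood basis so one may solve $\bar\partial$ on a slightly larger Stein open set and restrict), so I would simply cite it rather than reprove it. Everything else is formal diagram-chasing in the long exact sequence, and the numerology ($n=3$, the bidegrees $(1,1)$, $(2,1)$, $(1,2)$, and the Levi-form eigenvalue count) just needs to be checked once and recorded.
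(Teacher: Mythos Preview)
Your proposal is correct and follows essentially the same approach as the paper's proof: both use the long exact sequence segment $H^{1,1}(Z)\to H^{1,1}(M)\to H^{1,2}_N(Z)$, invoke Stein vanishing for $H^{1,1}(Z)$ and $H^{2,1}(Z)$, and apply the duality $H^{1,2}_N(Z)\cong H^{2,1}(Z)^*$ from Proposition~5.1.5 of Folland--Kohn. The paper's version is terser (citing H\"ormander for the vanishing and omitting the Levi-form eigenvalue check you spell out), but the logic is identical.
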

\begin{proof}
Because $Z$ is a strongly pseudoconvex domain in a Stein manifold we have $H^{1,1}(Z)=0$ and $H^{1,2}_N(Z)\cong H^{2,1}(Z)^*=0$ \cite[Corollary 5.2.6]{Hormander_book}. The long exact sequence gives $H^{1,1}(M)=0$.
\end{proof}

\section{Alternative proof that the vanishing of $\mathcal{H}$ is an open condition}

\label{appendix-B}

In this section we prove in a more quantitative way that the obstruction space vanishes for an open set of Euclidean metrics on $H$. The advantage of this approach is that it can give an estimate on the size of the neighbourhood of $g_H$ in which the obstruction space vanishes. We will derive an explicit such bound in the case of the standard embedding of $S^5$ in $\mathbb{C}^3$.

Suppose that $\mathcal{H}=0$, so that $\Box_H$ is invertible and $d_H^-$ has right inverse $R=d_H^*\Box_H^{-1}$ as in Proposition \ref{Prop-Fredholm-alternative}.
As in Section \ref{section-uniform-estimates} we parametrise nearby metrics on $H$ by a bundle map $\mu:\Lambda_H^+\rightarrow\Lambda_H^-$.
The vanishing of $\ker \Box_{H,\mu}$ is equivalent to the surjectivity of $d_{H,\mu}^-$.
We have the explicit formula 
\begin{align*}
(\mathbbm{1}-\mu\mu^*)\circ \pi^-\circ d^-_{H,\mu}
=
d^-_H-\mu \circ d^+_H.
\end{align*}
$\mathbbm{1}-\mu\mu^*$ is an isomorphism if $|\mu|<1$. Thus under this bound on $\mu$, it is enough to show that 
\begin{align*}
d^-_{H}-\mu\circ d^+_H: \Omega_H^1\rightarrow \Omega_H^-
\end{align*}
is surjective. To prove this, we will show that 
\begin{align*}
(d^-_{H}-\mu\circ d^+_H)\circ R = \mathbbm{1}-\mu\circ d_H^+ \circ R: L^2(\Lambda_H^-)\rightarrow L^2(\Lambda_H^-)
\end{align*}
is an isomorphism if $\mu$ is sufficiently small. 
If $\kappa$ is a constant such that, for all $\sigma$, 
\begin{align}
\Vert d^{+}_{H} R \sigma \Vert \leq \kappa \Vert \sigma\Vert
\label{kappa} 
\end{align}
then if $\vert \mu\vert<\kappa^{-1}$ everywhere the operator 
$$    \mu d^{+}_{H} R: L^{2}(\Lambda^{-}_{H})\rightarrow L^{2}(\Lambda^{-}_{H})$$ has operator norm less than $1$ and it follows from the usual Neumann series that $d^{-}_{H,\mu}$ is surjective.

On a compact, oriented Riemannian $4$-manifold we have the identity $\|d^+\eta\|=\|d^-\eta\|$ for any $1$-form $\eta$. In the contact case there is an extra term:
\begin{align*}
\|d_H^+\eta\|^2-\|d_H^-\eta\|^2
&=
\int_M d_H \eta \wedge d_H\eta\wedge\theta
=
\int_M d\eta\wedge d\eta\wedge \theta
=
\int_M d\eta\wedge \eta\wedge d\theta
\\
&=
\int_M d\eta\wedge \eta\wedge\omega
=
\int_M \theta\wedge \mathcal{L}_v\eta\wedge\eta\wedge\omega
=
-\int_M \langle I \mathcal{L}_v\eta,\eta\rangle \Vol 
\\
&= 
-( I \mathcal{L}_v\eta,\eta).
\end{align*}
Setting $\eta= R\sigma$, this gives
\begin{align}
\Vert d^{+}_{H} R \sigma\Vert^{2}\leq   \Vert \sigma\Vert^{2} + \vert ( I \cL_{v} \eta,\eta ) \vert.
\label{d+estimate-step1}
\end{align}

The existence of a constant $\kappa$ in \eqref{kappa} follows from formula \eqref{d+estimate-step1} and our analysis in Section \ref{section_linear_analysis}.
Let $P$ be the standard self-adjoint second order operator $P= 1 + \nabla^{*}\nabla$.
then
$$ \vert ( I \cL_{v} \eta,\eta )\vert = \vert ( P^{-1/4} I\cL_{v} \eta, P^{1/4} \eta ) \vert \leq c_{1} \Vert \eta \Vert^{2}_{\frac{1}{2}}, $$
since $P^{-1/4} I \cL_{v}$ and $P^{1/4}$ are pseudo-differential operators of order $1/2$. Now the estimate \eqref{improved-estimate} for the operator $R$ gives
$$ \vert ( I \cL_{v} \eta,\eta )\vert \leq c_{2} \Vert \sigma \Vert^{2}, $$
for some $c_{2}$. Then we can take $\kappa= \sqrt{1+c_{2}}$.

We now study the case of the standard contact structure on $S^{5}$, with standard metric. Regarding $S^{5}$ as the principle circle bundle over $\bC\bP^{2}$ corresponding to the Hopf line bundle $L\rightarrow \bC \bP^{2}$, the contact structure is the field of horizontal subspaces for the standard connection. Passing to the complexified bundles, we can write any $\eta\in\Omega^{1}_{H}$ as a sum $\eta=\sum_{k} \eta_{k}$ of components in the weight spaces for the circle action.  A component $\eta_{k}$ can be regarded as a $1$-form on $\bC\bP^{2}$ with values in the line bundle $L^{k}$. 
Similarly for sections of $\Lambda^{-}_{H}$. The operator $d^{-}_{H}$ is a sum of components
$$    d^{-}_{k}:\Omega^{1}_{\bC\bP^{2}}(L^{k})\rightarrow \Omega^{-}_{\bC\bP^{2}}(L^{k}), $$ and similarly $\Box= \sum_k \Box_{k}$. These operators are very familiar in 4-dimensional Riemannian geometry and in particular the fact that $\bC\bP^{2}$ is a self-dual manifold and that the curvature of $L$ is a self-dual form means that there are   simple Weitzenbock formulae
$$   \Box_{k}= \frac{1}{2} \nabla_{k}^{*}\nabla_{k} + S/6, $$
where $S$ is the scalar curvature of $\bC\bP^{2}$. 
Because $S$ is positive, this gives an alternative proof that the obstruction space $\ker\Box$ vanishes for this example. 

To get   favourable bounds on the operators $\nabla_{k}^{*}\nabla_{k}$ we review some general theory. Let $V$ be a complex vector bundle with Hermitian connection over a compact K\"ahler manifold $X$. The connection defines operators
$$  \partial_{V}:\Omega^{0}(V)\rightarrow \Omega^{1,0}(V) \ \ \ \ \, \ \ \ \   \db_{V}:\Omega^{0}(V)\rightarrow \Omega^{0,1}(V), $$
in the usual way (note that we not supposing that $\db_{V}$ defines a holomorphic structure). Then we have the identity, for any section $s$ of $V$: 
$$  \Vert \partial_{V} s \Vert^{2}- \Vert \db_{V} s \Vert^{2} = ( i (F_{V}.\omega)(s), s) ). $$
where $F_{V}$ is the curvature of the connection. In our application we take
$V=\Lambda^{-}_{\bC\bP^{2}}\otimes L^{k}$. The fact that $\bC\bP^{2}$ is an Einstein manifold implies that the curvature of the bundle $\Lambda^{-}_{\bC\bP^{2}}$ is anti-self-dual and so orthogonal to the self-dual form $\omega$. Thus the only contribution to $F_{V}.\omega$ comes from $L^{k}$ and we get
$$   \Vert \partial_{V} s \Vert^{2}- \Vert \db_{V} s \Vert^{2} = k \Vert s \Vert^{2}. $$
This implies that
$$ \Vert \nabla_{k} s \Vert^{2} \geq \vert k\vert \Vert s\Vert^{2}. $$
Combining with the previous discussion, and using the fact that the scalar curvature is positive we get
$$ ( \Box_{k}s,s) \geq \frac{1}{2} \vert k\vert \Vert s\Vert^{2}. $$
This implies that, for $k\neq 0$,
$$  ( \Box_{k}^{-1}s,s) \leq 2 \vert k\vert^{-1} \Vert s\Vert^{2}. $$
Now if $\sigma\in \Omega^{-}_{H}$ has components $ \sigma_{k}$ and $\eta_{k}= d^{*}_k\Box_{k}^{-1}\sigma_{k}$ we have
$$  \Vert \eta_{k}\Vert^{2}= ( d^{*}\Box_{k}^{-1}\sigma_{k},d^{*}\Box_{k}^{-1}\sigma_{k})=  ( \Box_{k}^{-1}\sigma_{k},\sigma_{k}) \leq 2 \vert k\vert^{-1} \Vert \sigma_{k} \Vert^{2}.  $$
One can check that the Reeb vector field $v$ is $\sqrt{2}$ times the generator of the circle action. Therefore, the Lie derivative term is $\cL_{v} \eta_{k}= i \sqrt{2} k \eta_{k}$ so
$$ \vert \langle I \cL_{v} \eta, \eta\rangle \vert = \vert \sum \langle \sqrt{2} k I i \eta_{k}, \eta_{k} \rangle \vert \leq \sum \sqrt{2}\vert k \vert \Vert \eta_{k}\Vert^{2}. $$
Thus we get
$$   \vert \langle I \cL_{v} \eta, \eta\rangle \vert\leq \sum_k 2\sqrt{2}\Vert \sigma_{k}\Vert^{2}= 2\sqrt{2}\Vert \sigma \Vert^{2}.$$
We can take $c_{2}=1$ and we see that $d_{\mu}$ is invertible if
$\vert \mu\vert < 1/\sqrt{1+2\sqrt{2}}$.

\section{A Weitzenb\"ock formula for an SU(2)-holonomy connection}
\label{appendix-C}

In Lemma \ref{lemma-connection} we have shown that there exist metric connections $\nabla$ on $TM$ which respect the contact structure in a suitable way. From Lemma \ref{lemma-general_Weitzenbock}
and the proof of Proposition \ref{Proposition-1/2-estimate-Q}
 we know that for each such a connection there is a self-adjoint endomorphism $\mathcal{R}^{\nabla}$ of $\Lambda^-_H$ such that 
\begin{align*}
\Box_H=\frac{1}{2} \nabla^*_H\nabla_H + \mathcal{R}^{\nabla}.
\end{align*}
This gives us criterion for the vanishing of the obstruction space $\mathcal{H}$.
\begin{proposition}
If $\mathcal{R}^{\nabla}$ is a positive operator, then $\mathcal{H}=0$. 
\end{proposition}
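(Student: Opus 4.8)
The plan is to run the classical Bochner--Weitzenb\"ock vanishing argument against the formula $\Box_H = \tfrac12\nabla_H^*\nabla_H + \mathcal{R}^{\nabla}$ recorded just above. First I would fix an arbitrary element $\sigma\in\mathcal{H}$. By construction $\mathcal{H}=\ker\Box_H$ consists of smooth sections of $\Lambda^-_H$ (regularity being guaranteed in any case by the hypo-ellipticity of $\Box_H$ established in Section~\ref{section_linear_analysis}), so $\Box_H\sigma=0$ and all integrations by parts on the closed manifold $M$ are legitimate.

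The only computational step is to pair $\Box_H\sigma=0$ with $\sigma$ in $L^2$ and insert the Weitzenb\"ock identity:
\[
0 \;=\; (\Box_H\sigma,\sigma)
\;=\; \tfrac12(\nabla_H^*\nabla_H\sigma,\sigma) + (\mathcal{R}^{\nabla}\sigma,\sigma)
\;=\; \tfrac12\,\|\nabla_H\sigma\|^2 + \int_M \langle \mathcal{R}^{\nabla}\sigma,\sigma\rangle\,\Vol .
\]
Since $\mathcal{R}^{\nabla}$ is self-adjoint and positive, the integrand on the right is pointwise non-negative, and the first summand is visibly non-negative as well; hence both summands vanish. From the vanishing of $\int_M\langle\mathcal{R}^{\nabla}\sigma,\sigma\rangle\,\Vol$ together with the pointwise positivity of $\mathcal{R}^{\nabla}$ one concludes $\sigma\equiv 0$, i.e. $\mathcal{H}=0$.

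There is no genuine analytic difficulty; the only point deserving care is the precise meaning of ``positive operator''. If $\mathcal{R}^{\nabla}$ is merely positive semi-definite, the computation recovers only $\nabla_H\sigma=0$ together with $\sigma(p)\in\ker\mathcal{R}^{\nabla}_p$ for every $p$, so for the stated conclusion one wants $\mathcal{R}^{\nabla}$ positive definite at each point (alternatively, positive definite somewhere, combined with a unique-continuation argument for $\nabla_H$-parallel sections along the contact directions); I would state and use the proposition with the fibrewise positive-definite reading. Finally, since by Lemma~\ref{lemma-connection} the adapted connection $\nabla$ is unique only up to a skew-symmetric endomorphism of $H$, and this freedom affects only the curvature term, it is worth remarking that one may optimise the choice of $T_v^a$ to make $\mathcal{R}^{\nabla}$ as positive as possible when verifying the hypothesis in examples.
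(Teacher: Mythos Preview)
Your argument is correct and is exactly the standard Bochner vanishing argument the paper has in mind; in fact the paper does not write out a proof at all, stating the proposition as an immediate consequence of the Weitzenb\"ock identity $\Box_H=\tfrac12\nabla_H^*\nabla_H+\mathcal{R}^{\nabla}$. Your discussion of the precise meaning of ``positive'' and of the freedom in choosing $T_v^a$ is a useful addendum, and the latter point anticipates precisely what the remainder of Appendix~\ref{appendix-C} does: fix a particular $T_v^a$ so that $\mathcal{R}^{\nabla}$ takes the explicit form \eqref{W-ID}.
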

In $4$-dimensional Riemannian geometry, the curvature term in the Weitzenb\"ock formula is 
\begin{align}
W^-+\frac{s}{3},
\label{Weitzenbock-Riemann}
\end{align}
where $W^-$ is the anti-self-dual part of the Weyl tensor and $s$ is the scalar curvature. 
The goal of this Appendix is to pick a particularly well-suited connection from  Lemma \ref{lemma-connection} and show that for this connection $\mathcal{R}^{\nabla}$ essentially is a contact analogue of \eqref{Weitzenbock-Riemann}. The choice in Lemma \ref{lemma-connection} is $T_v^a$, the skew-symmetric part of $T(v,\cdot)$, the torsion tensor contracted with $v$. In Section \ref{torsion} we show that the additional condition that the full $\SU(2)$-structure is parallel completely determines the component of $T_v^a$ in $\mathfrak{so}^+(H)$. We then set the component in $\mathfrak{so}^-(H)$ equal to $0$ to pick a specific such connection. In Section \ref{section-curvature} we describe the curvature tensor of this connection, and in Section \ref{section-Weitzenbock} we compute the Weitzenb\"ock formula.

\subsection{Determining the torsion}

\label{torsion}

In this section we prove

\begin{proposition}
\label{proposition-connection}
$(\omega,\alpha,\beta)$ are parallel with respect to a connection $\nabla$ on $TM$ from Lemma \ref{lemma-connection}
if and only if 
\begin{align*}
T^a_v=-\frac{1}{2}f I \mod \Gamma(\mathfrak{so}^-(H)).
\end{align*}
\end{proposition}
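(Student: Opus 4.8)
\emph{Proof strategy.} The plan is to decompose the condition $\nabla\omega=\nabla\alpha=\nabla\beta=0$ along $TM=\mathbb{R}v\oplus H$ into the two batches $\nabla_X\gamma=0$ for $X\in\Gamma(H)$ and $\nabla_v\gamma=0$, for $\gamma\in\{\omega,\alpha,\beta\}$. By Lemma~\ref{lemma-connection}(i) the operators $\nabla_X$, $X\in\Gamma(H)$, are independent of the choice of connection in the family (the freedom $T^a_v$ enters only in $\nabla_v$), so I first check that $\nabla_X\gamma=0$, $X\in\Gamma(H)$, is automatic for a contact hyperk\"ahler structure. Since $\nabla$ is metric, preserves $H$ and satisfies $\nabla v=0$, it preserves the volume form of $g_H$ on $H$, hence the Hodge star on $\Lambda^2_H$ and the splitting $\Lambda^2_H=\Lambda^+_H\oplus\Lambda^-_H$; as $\omega,\alpha,\beta$ have constant norm and span $\Lambda^+_H$, the $\nabla_X$ of the triple is described by three $1$-forms (the connection form of the induced $\mathfrak{so}(3)$-connection on $\Lambda^+_H$). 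Substituting this into $d_H=\sum_i e^i\wedge\nabla_i$ (Lemma~\ref{lemma-connection}(ii)) and using $d_H\omega=d_H\alpha=d_H\beta=0$ (the contact hyperk\"ahler equations, together with $d_H\omega=d\omega=d^2\theta=0$), the vanishing reduces to the computation underlying the classical fact that a Riemannian $4$-manifold carrying three closed pointwise-orthonormal self-dual $2$-forms is hyperk\"ahler; it is precisely the torsion-free-looking identity $d_H=\sum e^i\wedge\nabla_i$ that makes that computation run verbatim for $\nabla$ on $H$ despite the $v$-torsion of $\nabla$.

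It then remains to analyse $\nabla_v\gamma$ for $\gamma\in\Omega^2_H$. By \eqref{covariant_v_derivative}, $\nabla_vX=\mathcal{L}_vX+T_vX$ for $X\in\Gamma(H)$, so
\[
(\nabla_v\gamma)(X,Y)=(\mathcal{L}_v\gamma)(X,Y)-\gamma(T_vX,Y)-\gamma(X,T_vY).
\]
Write $\gamma(\cdot,\cdot)=g_H(\widehat\gamma\,\cdot\,,\cdot)$ with $\widehat\gamma$ skew, so $\widehat\omega=I,\ \widehat\alpha=J,\ \widehat\beta=K$, and split $T_v=T^s_v+T^a_v$ into $g_H$-symmetric and skew parts. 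A short computation rewrites the last two terms as $g_H\big((\{\widehat\gamma,T^s_v\}+[\widehat\gamma,T^a_v])\,\cdot\,,\cdot\big)$, so $\nabla_v\gamma=0$ becomes the identity $\widehat{\mathcal{L}_v\gamma}=\{\widehat\gamma,T^s_v\}+[\widehat\gamma,T^a_v]$ in $\mathfrak{so}(H)$. Three algebraic observations now do the work: (a) $[\widehat\gamma,T^a_v]$ depends only on the self-dual component $(T^a_v)^+\in\mathfrak{so}^+(H)$, because $\mathfrak{so}^+(H)$ and $\mathfrak{so}^-(H)$ commute --- this is why the statement is only modulo $\mathfrak{so}^-(H)$; (b) $\operatorname{tr}T^s_v=\tfrac12\operatorname{tr}_{g_H}\mathcal{L}_vg_H=0$ since $\mathcal{L}_v\omega=0$ (so $\mathcal{L}_v$ preserves the volume), hence $T^s_v$ is trace-free symmetric, and writing it as a sum of products of a self-dual and an anti-self-dual skew endomorphism gives $\{\widehat\gamma,T^s_v\}\in\mathfrak{so}^-(H)$; (c) $[\widehat\gamma,(T^a_v)^+]\in[\mathfrak{so}^+(H),\mathfrak{so}^+(H)]=\mathfrak{so}^+(H)$. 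Therefore $\nabla_v\gamma=0$ splits into an $\mathfrak{so}^+$-equation $\pi^+\widehat{\mathcal{L}_v\gamma}=[\widehat\gamma,(T^a_v)^+]$ and an $\mathfrak{so}^-$-equation $\pi^-\widehat{\mathcal{L}_v\gamma}=\{\widehat\gamma,T^s_v\}$.

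For the $\mathfrak{so}^+$-equations, $\mathcal{L}_v\omega=0$, while differentiating the orthonormality relations along $v$ gives $\mathcal{L}_v\alpha=f\beta+\rho$ and $\mathcal{L}_v\beta=-f\alpha+\rho'$ with $\rho,\rho'\in\Omega^-_H$ and $f=\langle\mathcal{L}_v\alpha,\beta\rangle$. Putting $(T^a_v)^+=xI+yJ+zK$ and using $[I,J]=2K$, $[J,K]=2I$, $[K,I]=2J$, the three $\mathfrak{so}^+$-equations read $0=[I,(T^a_v)^+]$, $fK=[J,(T^a_v)^+]$, $-fJ=[K,(T^a_v)^+]$; the first forces $y=z=0$, the second then gives $x=-\tfrac12 f$, and the third is automatically satisfied. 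This proves the "only if" direction and the $\mathfrak{so}^+$-half of "if". Finally the $\mathfrak{so}^-$-equations hold identically, which completes "if": one has the general identity $\mathcal{L}_v\gamma=(\mathcal{L}_vg_H)(\widehat\gamma\,\cdot\,,\cdot)+g_H\big((\mathcal{L}_v\widehat\gamma)\,\cdot\,,\cdot\big)$, whose first summand contributes, after taking the skew part, the endomorphism $\{\widehat\gamma,T^s_v\}\in\mathfrak{so}^-(H)$, and whose second summand contributes a skew endomorphism anticommuting with $\widehat\gamma$ (differentiate $\widehat\gamma^{\,2}=-\Id$), hence lying in $\mathfrak{so}^+(H)$; comparing $\mathfrak{so}^-$-parts yields precisely $\pi^-\widehat{\mathcal{L}_v\gamma}=\{\widehat\gamma,T^s_v\}$.

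I expect the main obstacle to be the $\mathfrak{so}^{\pm}$-bookkeeping of the last two paragraphs --- in particular the claim $\{\widehat\gamma,T^s_v\}\in\mathfrak{so}^-(H)$, resting on $T^s_v$ being trace-free together with the decomposition $S^2_0 H^*\cong\mathfrak{so}^+(H)\otimes\mathfrak{so}^-(H)$, and the claim that the $\mathcal{L}_v\widehat\gamma$-contribution lands in $\mathfrak{so}^+(H)$ --- along with checking carefully that the $H$-direction identities of the first paragraph really are automatic for every contact hyperk\"ahler structure.
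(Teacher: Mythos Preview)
Your proof is correct and follows the same overall strategy as the paper: first dispose of $\nabla_X\gamma$ for $X\in\Gamma(H)$ by the hyperk\"ahler-type argument using $d_H=\sum e^i\wedge\nabla_i$, then reduce $\nabla_v\gamma=0$ to an endomorphism equation and solve for the $\mathfrak{so}^+(H)$-part of $T_v^a$.

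The execution differs in an instructive way. The paper carries the symmetric part $T_v^s$ through explicit computations with $\mathcal{L}_vI,\mathcal{L}_vJ,\mathcal{L}_vK$ (its two lemmas), obtaining the conditions $[I,T_v^a]=0$, $[J,T_v^a]=\tfrac12[J,\mathcal{L}_vK\circ K]$, $[K,T_v^a]=\tfrac12[K,\mathcal{L}_vJ\circ J]$, and then evaluates the right-hand sides using $\mathcal{L}_v\alpha\equiv f\beta$, $\mathcal{L}_v\beta\equiv -f\alpha\pmod{\Omega_H^-}$. You instead separate $T_v^s$ and $T_v^a$ at the outset via the clean identity $\widehat{\mathcal{L}_v\gamma}=\{\widehat\gamma,T_v^s\}+[\widehat\gamma,T_v^a]$, and then use two representation-theoretic facts --- $\operatorname{tr}T_v^s=0$ together with $S^2_0H^*\cong\mathfrak{so}^+(H)\otimes\mathfrak{so}^-(H)$ to get $\{\widehat\gamma,T_v^s\}\in\mathfrak{so}^-(H)$, and $\{\mathcal{L}_v\widehat\gamma,\widehat\gamma\}=0$ to place the remaining contribution in $\mathfrak{so}^+(H)$ --- so that the $\mathfrak{so}^-$-equation is seen to hold identically and only the $\mathfrak{so}^+$-equation needs solving. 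Your route is shorter and makes transparent \emph{why} the answer is determined only modulo $\mathfrak{so}^-(H)$ and why the ``if'' direction needs no separate verification; the paper's route is more hands-on and yields along the way the explicit formulas for $\mathcal{L}_v\alpha,\mathcal{L}_v\beta$ as endomorphisms that it reuses later in Appendix~C.
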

By property (ii) of Lemma \ref{lemma-connection} we have
\begin{align*}
\Alt(\nabla_H\omega)=d_H \omega = 0,
\quad
\Alt(\nabla_H\alpha)=d_H \alpha = 0,
\quad
\Alt(\nabla_H\beta)=d_H\beta =0.
\end{align*}
With a similar representation theoretic argument as for hyperk\"ahler structures \cite{Salamon-book}, one can conclude
\begin{align*}
\nabla_H\omega = 0, 
\quad
\nabla_H\alpha = 0,
\quad
\nabla_H\beta =0.
\end{align*}
Thus we need to determine $T_v^a$ such that
\begin{align*}
\nabla_v\omega=0,
\quad
\nabla_v\alpha=0,
\quad
\nabla_v\beta=0.
\end{align*}

It will be helpful to express the Lie derivatives of $g_H, \omega,\alpha,\beta$ along $v$ as endomorphism fields with respect to the metric $g_H$.

\begin{lemma}
As endomorphism fields, the Lie derivatives of $g, \alpha$ and $\beta$ in the direction of $v$ correspond to
\begin{gather}
\Lv g \sim \Lv I \circ I = -I \circ \Lv I,
\label{Lie-v-g}
\\
\Lv \alpha \sim  \Lv I \circ K + \Lv J=-I \circ \Lv K,
\label{Lie-v-alpha}
\\
\Lv \beta \sim -\Lv I \circ J + \Lv K = I \circ \Lv J.
\label{Lie-v-beta}
\end{gather}
\end{lemma}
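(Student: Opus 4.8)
The plan is to differentiate the three pointwise identities $\omega = g_H(I\,\cdot\,,\,\cdot\,)$, $\alpha = g_H(J\,\cdot\,,\,\cdot\,)$, $\beta = g_H(K\,\cdot\,,\,\cdot\,)$ along $v$ and read off the resulting relations between endomorphism fields. As preliminaries I would note that $v\lrcorner\,\omega = v\lrcorner\, d\theta = 0$ and $v\lrcorner\,\theta = 1$, so Cartan's formula gives $\mathcal{L}_v\theta = 0$, whence $\mathcal{L}_v\omega = \mathcal{L}_v d\theta = 0$ and, writing $g = \theta^2 + g_H$, $\mathcal{L}_v g$ agrees with $\mathcal{L}_v g_H$ on $H$. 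Since $\mathcal{L}_v$ preserves $\Gamma(H)$ it acts as a derivation on the tensors built from $H$, so the Leibniz rule may be applied to the three identities above. I introduce the endomorphism fields $\dot I := \mathcal{L}_v I$, $\dot J := \mathcal{L}_v J$, $\dot K := \mathcal{L}_v K \in \Gamma(\End(H))$ and the $g_H$-symmetric endomorphism $S$ defined by $(\mathcal{L}_v g_H)(X,Y) = g_H(SX,Y)$, and I fix the quaternion conventions $IJ = K$, $JK = I$, $KI = J$, $I^2 = J^2 = K^2 = -\mathbbm{1}$ (so that $IK = -J$, $JI = -K$, and so on).

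Differentiating $\omega = g_H(I\,\cdot\,,\,\cdot\,)$ and using $\mathcal{L}_v\omega = 0$ gives $g_H\bigl((S\circ I + \dot I)X, Y\bigr) = 0$ for all $X,Y\in\Gamma(H)$, hence $S\circ I = -\dot I$, and therefore $S = -\dot I\circ I^{-1} = \dot I\circ I$ since $I^{-1} = -I$; this is \eqref{Lie-v-g}. The second equality there, $\dot I\circ I = -I\circ\dot I$, is just the identity $\mathcal{L}_v(I^2) = \dot I\circ I + I\circ\dot I = 0$, which I will use again. Differentiating $\alpha = g_H(J\,\cdot\,,\,\cdot\,)$ gives $\mathcal{L}_v\alpha \sim S\circ J + \dot J$, and substituting $S = \dot I\circ I$ together with $I\circ J = K$ yields $\mathcal{L}_v\alpha \sim \dot I\circ K + \dot J = \mathcal{L}_v I\circ K + \mathcal{L}_v J$. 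For the second expression in \eqref{Lie-v-alpha}, differentiate $K = I\circ J$ to obtain $\dot K = \dot I\circ J + I\circ\dot J$; then $-I\circ\dot K = -I\circ\dot I\circ J + \dot J = \dot I\circ I\circ J + \dot J = \dot I\circ K + \dot J$, where the second step uses $\dot I\circ I = -I\circ\dot I$. The computation for $\beta = g_H(K\,\cdot\,,\,\cdot\,)$ has the same shape: $\mathcal{L}_v\beta \sim S\circ K + \dot K = \dot I\circ I\circ K + \dot K = -\dot I\circ J + \dot K$, using $I\circ K = -J$; substituting $\dot K = \dot I\circ J + I\circ\dot J$ collapses the right-hand side to $I\circ\dot J$, so $\mathcal{L}_v\beta \sim -\mathcal{L}_v I\circ J + \mathcal{L}_v K = I\circ\mathcal{L}_v J$, which is \eqref{Lie-v-beta}.

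There is essentially no analytic content: the argument reduces to the two derivation identities $\mathcal{L}_v\omega = 0$ and $\mathcal{L}_v(I^2) = 0$, which together pin down $\mathcal{L}_v g_H$ in terms of $\mathcal{L}_v I$, and to the quaternionic relations, which carry this over to $\alpha$ and $\beta$ through $K = I\circ J$. The only step that needs a little care is the bookkeeping when passing between bilinear forms and endomorphism fields---recalling that $\mathcal{L}_v g_H$ corresponds to a $g_H$-symmetric endomorphism whereas $\mathcal{L}_v\omega$, $\mathcal{L}_v\alpha$, $\mathcal{L}_v\beta$ correspond to skew-symmetric ones---and keeping the signs $IK = -J$, $JI = -K$ consistent throughout.
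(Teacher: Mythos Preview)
Your proof is correct and follows essentially the same approach as the paper: differentiate the defining relations $\omega=g_H(I\cdot,\cdot)$, $\alpha=g_H(J\cdot,\cdot)$, $\beta=g_H(K\cdot,\cdot)$ along $v$, use $\mathcal{L}_v\omega=0$ to identify $\mathcal{L}_v g_H\sim \mathcal{L}_v I\circ I$, and then invoke the quaternionic relations. The only differences are cosmetic---you name the endomorphism $S$ explicitly and derive the second equalities by differentiating $K=I\circ J$ rather than $J=-I\circ K$, which amounts to the same computation.
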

\begin{proof}
Because $\Lv\omega=0$, for any $X,Y\in\Gamma(H)$ we get
\begin{align*}
0=\Lv\omega(X,Y) &=v(g(IX,Y))-g(I\Lv X, Y)-g(IX,\Lv Y)
\\
&=
v(g(IX,Y))-g(\Lv (IX), Y)-g(IX,\Lv Y)+g(\Lv(IX)-I\Lv X,Y)
\\
&=
\Lv g(IX,Y)+g((\Lv I)X,Y).
\end{align*}
Thus $\Lv g$ corresponds to the endomorphism $\Lv I\circ I$. 
An analogous computation for $\alpha$ and $\Lv g_H \sim \Lv I\circ I$ give
\begin{align*}
\Lv \alpha(X,Y) &= \Lv g(JX,Y)+g((\Lv J)X,Y)
\\
&=
g((\Lv I \circ I \circ J)X,Y)+g((\Lv J)X,Y)
\\
&=
g((\Lv I \circ K + \Lv J)X,Y).
\end{align*}
Similarly  for $\beta$ we get
\begin{align*}
\Lv \beta(X,Y) &= \Lv g(KX,Y)+g((\Lv K)X,Y)
\\
&=
g((\Lv I \circ I \circ K)X,Y)+g((\Lv K)X,Y)
\\
&=
g((-\Lv I \circ J + \Lv K)X,Y).
\end{align*}
The remaining identites follow from
\begin{gather*}
0=\Lv(-\mathbbm{1})=\Lv I^2=\Lv I \circ I + I \circ \Lv I,
\\
\Lv J = - \Lv(I\circ K)= - \Lv I \circ K -I \circ \Lv K,
\\
\Lv K = \Lv(I\circ J)= \Lv I\circ J + I\circ \Lv J.
\end{gather*}
\end{proof}

\begin{lemma}
The conditions
\begin{align*}
\nabla_v\omega=0,
\quad
\nabla_v\alpha=0,
\quad
\nabla_v\beta=0,
\end{align*}
are equivalent to 
\begin{subequations}
\begin{align}
[I, T_v^a] &= 0, 
\label{cond1}
\\
[J, T_v^a] &= \frac{1}{2} [J, \Lv K \circ K],
\label{cond2}
\\
[K, T_v^a] &= \frac{1}{2} [K, \Lv J \circ J]. 
\label{cond3}
\end{align}
\end{subequations}
\end{lemma}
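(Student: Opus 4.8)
The plan is to convert each of the three equations $\nabla_v\gamma=0$, for $\gamma$ ranging over $\omega,\alpha,\beta$, into an identity between endomorphism fields of $H$, exploiting the correspondence of $\omega,\alpha,\beta$ with $I,J,K$. First I would use \eqref{covariant_v_derivative}, which gives $\nabla_vX=\Lv X+T_vX$ for $X\in\Gamma(H)$, to compute that for any $\gamma\in\Omega^2_H$ with associated endomorphism $A$ (so $\gamma(\cdot\,,\cdot)=g(A\,\cdot\,,\cdot)$),
\begin{align*}
(\nabla_v\gamma)(X,Y) &= (\Lv\gamma)(X,Y)-\gamma(T_vX,Y)-\gamma(X,T_vY)\\
&= (\Lv\gamma)(X,Y)-g\big((AT_v+T_v^{\dagger}A)X,Y\big),
\end{align*}
where $T_v^{\dagger}$ denotes the $g$-transpose of $T_v$. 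Splitting $T_v=T_v^s+T_v^a$ as in Lemma \ref{lemma-connection}, so that $T_v^{\dagger}=T_v^s-T_v^a$, one has $AT_v+T_v^{\dagger}A=\{A,T_v^s\}+[A,T_v^a]$ with $\{\cdot\,,\cdot\}$ the anticommutator. Hence $\nabla_v\gamma=0$ is equivalent to the endomorphism identity $[A,T_v^a]=B_\gamma-\{A,T_v^s\}$, where $B_\gamma$ is the endomorphism associated with $\Lv\gamma$; by $\Lv\omega=\Lv d\theta=0$, \eqref{Lie-v-alpha} and \eqref{Lie-v-beta} these are $B_\omega=0$, $B_\alpha=\Lv I\circ K+\Lv J$ and $B_\beta=-\Lv I\circ J+\Lv K$. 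Finally, by Lemma \ref{lemma-connection}(v) together with \eqref{Lie-v-g}, $T_v^s$ corresponds to $\tfrac12\Lv g\sim\tfrac12\,\Lv I\circ I$.

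With this in hand I would dispatch the three cases. For $\gamma=\omega$, using $\Lv I\circ I=-I\circ\Lv I$ one checks $\{I,\Lv I\circ I\}=0$, so $\{I,T_v^s\}=0$ and the equivalence reads $[I,T_v^a]=0$, which is \eqref{cond1}. For $\gamma=\alpha$, a short computation with the quaternion relations gives $\{J,T_v^s\}=\tfrac12\{K,\Lv I\}$, so the equivalence becomes $[J,T_v^a]=\Lv I\circ K+\Lv J-\tfrac12\{K,\Lv I\}$; it then remains to check the purely algebraic identity that the right-hand side equals $\tfrac12[J,\Lv K\circ K]$, which is \eqref{cond2}. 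The case $\gamma=\beta$ is handled identically and produces \eqref{cond3}.

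These remaining algebraic identities are proved by expanding everything in $\Lv I,\Lv J,\Lv K$ and collapsing with the quaternion relations and their $\Lv$-derivatives: differentiating $I^2=J^2=K^2=-\mathbbm{1}$ gives $\{X,\Lv X\}=0$ for $X=I,J,K$; differentiating $\{I,J\}=\{J,K\}=\{K,I\}=0$ gives $\{\Lv I,J\}+\{I,\Lv J\}=0$ and its cyclic companions; and differentiating $IJ=K$, $JK=I$, $KI=J$ gives $\Lv I\circ J+I\circ\Lv J=\Lv K$ and its cyclic companions. After substitution, the identity for $\alpha$ collapses to $\{\Lv K,I\}+\{\Lv I,K\}=\Lv\{K,I\}=0$ and the one for $\beta$ to $\{\Lv I,J\}+\{\Lv J,I\}=\Lv\{I,J\}=0$, both of which hold. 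The only genuine obstacle is the bookkeeping: one must track carefully how the $\tfrac12\,\Lv I\circ I$ term in $T_v^s$ interacts with $J$ and $K$ so that all $\Lv I$-contributions recombine into exactly the commutators appearing in \eqref{cond2} and \eqref{cond3}. Everything else is routine.
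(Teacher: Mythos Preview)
Your proposal is correct and follows essentially the same approach as the paper. Both arguments use \eqref{covariant_v_derivative} to write $(\nabla_v\gamma)(X,Y)=(\Lv\gamma)(X,Y)-\gamma(T_vX,Y)-\gamma(X,T_vY)$, split $T_v=T_v^s+T_v^a$, substitute $T_v^s=\tfrac12\Lv I\circ I$ from Lemma~\ref{lemma-connection}(v) and \eqref{Lie-v-g}, and then reduce via the quaternion relations and their $\Lv$-derivatives; the paper phrases the second step as computing $2\,\mathrm{Alt}(A\circ T_v)$ while you write it as $\{A,T_v^s\}+[A,T_v^a]$, but these are the same expression.
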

\begin{proof}
If $B$ is any field of bilinear forms on $H$ and $X,Y\in\Gamma(H)$, formula \ref{covariant_v_derivative} implies
\begin{align*}
(\nabla_v B)(X,Y) &= v(B(X,Y))-B(\nabla_v X,Y)-B(X,\nabla_v Y)
\\
&=
v(B(X,Y))-B(\Lv X,Y)-B(X,\Lv Y)-B(T_v X,Y)-B(X, T_v Y)
\\
&=
\Lv B(X,Y)- T_vB(X,Y),
\end{align*}
where we write 
\begin{align*}
T_vB(X,Y)= B(T_v X,Y)+B(X,T_v Y)
\end{align*}
for the action of $T_v$ on tensor fields. Thus we need to determine $T_v^a$ such that
\begin{align*}
T_v \omega = \Lv \omega =0, \quad T_v \alpha = \Lv \alpha, \quad T_v \beta = \Lv \beta.
\end{align*}
The first equation gives
\begin{align*}
0 = T_v \omega(X,Y) = g(IT_v X,Y)+g(IX,T_v Y)
=
g(IT_v X,Y)-g(X,IT_v Y) = 2 g(\mathrm{Alt}(I\circ T_v)X,Y).
\end{align*}
We have 
\begin{align*}
2 \mathrm{Alt}(I\circ T_v)
&=
(I\circ T_v)-(I\circ T_v)^*
=
I \circ T_v + T_v^*\circ I
=
I \circ T_v^s + T_v^s \circ I + I \circ T_v^a - T_v^a \circ I 
\\
&=
\frac{1}{2} \Lv I-\frac{1}{2}\Lv I + I \circ T_v^a - T_v^a\circ I
=
[I,T_v^a]
.
\end{align*}
Thus $\nabla_v \omega =0$ gives  condition 
\eqref{cond1}.
$\nabla_v\alpha=0$ leads to the equation
\begin{align*}
\Lv\alpha(X,Y)
&=
T_v \alpha(X,Y) = g(J T_v X,Y)+g(JX,T_v Y) 
\\
&= g(J T_v X,Y)-g(X, JT_v Y)
\\
&=
2g(\mathrm{Alt}(J\circ T_v)X,Y),
\end{align*}
and analogously $\nabla_v\beta=0$ gives
\begin{align*}
\Lv \beta(X,Y) = 2 g(\mathrm{Alt}(K\circ T_v)X,Y).
\end{align*}
Substituting formulas \eqref{Lie-v-alpha} and \eqref{Lie-v-beta}
for $\Lv\alpha$ and $\Lv\beta$ in the above equations gives
\begin{align*}
[J,T_v^a]
&=
2 \mathrm{Alt}(J\circ T_v^a) 
= 
\Lv I \circ K +\Lv J -2\mathrm{Alt}(J\circ T_v^s) 
\\
&=
\Lv I \circ K +\Lv J - (J \circ T_v^s + T_v^s \circ J)
\\
&=
\Lv I \circ K + \Lv J + \frac{1}{2} J \circ I \circ \Lv I - \frac{1}{2} \Lv I \circ I \circ J
\\
&=
\frac{1}{2} \Lv I \circ K -\frac{1}{2}K \circ \Lv I + \Lv J
\\
&=
\frac{1}{2}(\Lv J \circ K + J \circ \Lv K)\circ K 
+
\frac{1}{2}K\circ(\Lv K \circ J + K \circ \Lv J) + \Lv J
\\
&=
\frac{1}{2} J \circ \Lv K \circ K + \frac{1}{2} K \circ \Lv K \circ J
\\
&=
\frac{1}{2}[J, \Lv K \circ K]
\end{align*}
and
\begin{align*}
[K,T_v^a]
&=
2 \mathrm{Alt}(K\circ T_v^a) 
= 
-\Lv I \circ J + \Lv K - 2\mathrm{Alt}(K\circ T_v^s) 
\\
&=
-\Lv I \circ J + \Lv K - (K \circ T_v^s + T_v^s \circ K)
\\
&=
-\Lv I \circ J + \Lv K + \frac{1}{2} K \circ I \circ \Lv I - \frac{1}{2} \Lv I \circ I \circ K
\\
&=
\frac{1}{2} J \circ \Lv I  -\frac{1}{2} \Lv I \circ J + \Lv K
\\
&=
\frac{1}{2}J \circ (\Lv J \circ K + J \circ \Lv K)
+
\frac{1}{2}(\Lv K \circ J + K \circ \Lv J)\circ J + \Lv K
\\
&=
\frac{1}{2} J \circ \Lv J \circ K + \frac{1}{2} K \circ \Lv J \circ J
\\
&=
\frac{1}{2}[K, \Lv J \circ J].
\end{align*}
Therefore, $\nabla_v\alpha=0$ and $\nabla_v\beta=0$ are equivalent to \eqref{cond2} and \eqref{cond3}, respectively.
\end{proof}

We now compute the right hand sides in \eqref{cond2} and \eqref{cond3}. 
The identity $\mathcal{L}_v\alpha = f\beta\mod \Omega_H^-$ and \eqref{Lie-v-alpha} give
\begin{align*}
\Lv \alpha \sim - I \circ \Lv K = f K \mod \Gamma(\mathfrak{so}^-(H)),
\end{align*}
and thus
\begin{align*}
\Lv K \circ K = -f I \mod\, J\circ \Gamma(\mathfrak{so}^-(H)).
\end{align*}
Because $J$ commutes with $\mathfrak{so}^-(H)$, we get 
$[J, \Lv K \circ K ] = 2f K$.
Analogously, $\Lv\beta=-f \alpha\mod \Omega^-_H$ and \eqref{Lie-v-beta} give
\begin{align*}
\Lv \beta \sim I \circ \Lv J = -f J \mod \Gamma(\mathfrak{so}^-(H))
\end{align*}
and thus
\begin{align*}
\Lv J \circ J = -f I \mod\,K\circ \Gamma(\mathfrak{so}^-(H)),
\end{align*}
which implies $[K, \Lv J \circ J] = -2f J$. 
Thus to determine $T_v^a$ we need to solve the system 
\begin{subequations}
\begin{align}
[I, T_v^a] &= 0, 
\\
[J, T_v^a] &= f K,
\\
[K, T_v^a] &= - f J. 
\end{align}
\end{subequations}
The solution is $T_v^a= -\frac{1}{2}f I \mod \Gamma(\mathfrak{so}^-(H))$,
which proves Proposition \ref{proposition-connection}.

\subsection{Structure of the curvature tensor}

\label{section-curvature}

From now on we fix a connection on $TM$ with the properties from Lemma \ref{lemma-connection} and Proposition \ref{proposition-connection} by setting the component of $T_v^a$ in $\mathfrak{so}^-(H)$ equal to zero and get
$T_v=\frac{1}{2}\Lv I \circ I -\frac{1}{2}f I$.
Denote by $R_g$ and $R^{\nabla}$ the curvature tensors 
of the Levi-Civita connection of $g$ and of $\nabla$, respectively.
The following Lemma gives the analogue of the Gauss equations in submanifold theory, relating $R_g$ and $R^{\nabla}$ in terms of the ``second fundamental form'' $B$ introduced in Lemma \ref{lemma-connection}.

\begin{lemma}
\label{Gauss-equation}
For $X,Y,Z,W\in\Gamma(H)$ we have 
\begin{align*}
\pi_H(R_g(X,Y)Z)
&=
R^{\nabla}(X,Y)Z + B(Y,Z) \nabla^{LC}_X v - B(X,Z) \nabla^{LC}_Y v 
\\
&+ \omega(X,Y) \nabla^{LC}_Z v - \omega(X,Y) T_v Z,
\\
R_g(X,Y,Z,W)
&=
R^{\nabla}(X,Y,Z,W) 
-B(Y,Z)B(X,W)+B(X,Z) B(Y,W)
\\
&+\frac{f+1}{2} \omega(X,Y)\omega(Z,W).
\end{align*}
\end{lemma}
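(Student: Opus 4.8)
The plan is a direct ``Gauss equation'' computation: expand the Levi--Civita curvature $R_g(X,Y)Z=\nabla^{LC}_X\nabla^{LC}_YZ-\nabla^{LC}_Y\nabla^{LC}_XZ-\nabla^{LC}_{[X,Y]}Z$ and substitute the relation $\nabla^{LC}_AB=\nabla_AB+B(A,B)v$ of Lemma~\ref{lemma-connection}(iii) wherever it applies.

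First I would assemble the elementary identities that control the $v$-components. For $X,Y\in\Gamma(H)$ one has $\theta([X,Y])=-d\theta(X,Y)=-\omega(X,Y)$, hence $[X,Y]=\pi_H[X,Y]-\omega(X,Y)v$, and by the torsion formula $T(X,Y)=\omega(X,Y)v$ of Lemma~\ref{lemma-connection}(iv) also $\pi_H[X,Y]=\nabla_XY-\nabla_YX$. From $\nabla v=0$ and torsion-freeness of $\nabla^{LC}$ we get $\nabla^{LC}_vZ=\nabla^{LC}_Zv+\mathcal{L}_vZ$, while \eqref{covariant_v_derivative} gives $\nabla_vZ=\mathcal{L}_vZ+T_vZ$. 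Finally, since $\theta=g(v,\cdot)$ and $B(A,W)=\theta(\nabla^{LC}_AW)$, differentiating $g(v,W)=0$ yields $g(\nabla^{LC}_Av,W)=-B(A,W)$ for $W\in\Gamma(H)$.

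Now I would expand $R_g(X,Y)Z$ term by term. The contributions in which every $\nabla^{LC}$ hits an $H$-valued section assemble, after using $[X,Y]=\pi_H[X,Y]-\omega(X,Y)v$, into $R^\nabla(X,Y)Z-\omega(X,Y)\nabla_vZ$; the contributions in which the Leibniz rule lands on the scalar $B(\cdot,\cdot)$ are purely $v$-directional and vanish under $\pi_H$; and the contributions in which $\nabla^{LC}$ differentiates $v$ produce exactly $B(Y,Z)\nabla^{LC}_Xv-B(X,Z)\nabla^{LC}_Yv$ together with a term $\omega(X,Y)\nabla^{LC}_vZ$ coming from the $v$-part of $[X,Y]$. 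Combining $\omega(X,Y)\nabla^{LC}_vZ$ with the leftover $-\omega(X,Y)\nabla_vZ$, the $\mathcal{L}_vZ$ terms cancel and one is left with $\omega(X,Y)(\pi_H\nabla^{LC}_Zv-T_vZ)$, which is the first displayed identity (read modulo $v$, equivalently after applying $\pi_H$). For the $(0,4)$-form identity I would pair the first identity with $W\in\Gamma(H)$, replace each $g(\nabla^{LC}_{\,\cdot}v,W)$ by $-B(\cdot,W)$, and use the connection fixed at the start of this subsection, $T_v=\frac12\mathcal{L}_vI\circ I-\frac12 fI$, together with $B(Z,W)=-\frac12(\mathcal{L}_vg)(Z,W)-\frac12\omega(Z,W)$ and $g(IZ,W)=\omega(Z,W)$, to check that $-\omega(X,Y)B(Z,W)-\omega(X,Y)g(T_vZ,W)=\frac{f+1}{2}\omega(X,Y)\omega(Z,W)$.

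The substantive point---and the only place where anything can go wrong---is the bookkeeping of $v$-components: one must keep track of the fact that $\nabla^{LC}$ and $\nabla$ disagree along $v$, so that the combination $\pi_H\nabla^{LC}_Zv-T_vZ$, which equals $\frac{f+1}{2}IZ$ by the explicit forms of $B$ and $T_v$, is precisely what produces the curvature term $\frac{f+1}{2}\omega(X,Y)\omega(Z,W)$. No new idea is required beyond careful accounting and the earlier structural results of Lemma~\ref{lemma-connection} and Proposition~\ref{proposition-connection}.
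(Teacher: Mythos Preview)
Your proposal is correct and follows essentially the same route as the paper's own proof: a direct expansion of $R_g(X,Y)Z$ using $\nabla^{LC}_AB=\nabla_AB+B(A,B)v$, the splitting $[X,Y]=\pi_H[X,Y]-\omega(X,Y)v$, and the identities $\nabla^{LC}_vZ=\nabla^{LC}_Zv+\mathcal{L}_vZ$, $\nabla_vZ=\mathcal{L}_vZ+T_vZ$, $g(\nabla^{LC}_Av,W)=-B(A,W)$, followed by the explicit computation of $-B(Z,W)-g(T_vZ,W)=\tfrac{f+1}{2}\omega(Z,W)$ from the formulas for $B$ and $T_v$. The only difference is cosmetic: you group contributions by whether they land in $H$ or along $v$, whereas the paper computes $\pi_H(\nabla^{LC}_X\nabla^{LC}_YZ)$, its $X\leftrightarrow Y$ swap, and $\nabla^{LC}_{[X,Y]}Z$ one at a time.
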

\begin{proof}
We compute
\begin{align*}
\nabla^{LC}_X \nabla^{LC}_Y Z 
&=
\nabla^{LC}_X (\nabla_Y Z + B(Y,Z)v)
\\
&=
\nabla_X \nabla_Y Z + B(Y,Z) \nabla^{LC}_X v
+
(B(X,\nabla_Y Z)+ X(B(Y,Z)) v
\end{align*}
and thus
\begin{align*}
\pi_H(\nabla^{LC}_X \nabla^{LC}_Y Z)
=
\nabla_X \nabla_Y Z + B(Y,Z) \nabla^{LC}_X v.
\end{align*}
Interchanging $X$ and $Y$ gives
\begin{align*}
\pi_H(\nabla^{LC}_Y \nabla^{LC}_X Z)
=
\nabla_Y \nabla_X Z + B(X,Z) \nabla^{LC}_Y v.
\end{align*}
The identity $[X,Y]=\pi_H ([X,Y]) -\omega(X,Y) v$ gives
\begin{align*}
\nabla^{LC}_{[X,Y]}Z
&=
\nabla^{LC}_{\pi_H([X,Y])}Z-\omega(X,Y) \nabla^{LC}_v Z
\\
&=
\nabla _{\pi_H([X,Y])}Z-\omega(X,Y) (\nabla^{LC}_Z v + [v,Z])+
\text{multiple of}\, v
\\
&=
\nabla_{[X,Y]}Z - \omega(X,Y) \nabla^{LC}_Z v + \omega(X,Y) T_v Z
+
\text{multiple of}\, v.
\end{align*}
Thus we get
\begin{align*}
R_g(X,Y)Z
&=
R^{\nabla}(X,Y)Z + B(Y,Z) \nabla^{LC}_X v - B(X,Z) \nabla^{LC}_Y v 
\\
&+ \omega(X,Y) \nabla^{LC}_Z v - \omega(X,Y) T_v Z 
+
\text{multiple of}\, v.
\end{align*}
Now we have
\begin{align*}
g(\nabla_X^{LC}v,W)=-g(v,\nabla^{LC}_X W) = - B(X,W).
\end{align*}
Using the above equation gives
\begin{align*}
&R_g(X,Y,Z,W)
\\
=&
R^{\nabla}(X,Y,Z,W) 
-B(Y,Z)B(X,W)+B(X,Z) B(Y,W)
\\
&-\omega(X,Y)B(Z,W) - \omega(X,Y) g(T_v Z, W)
\\
=&
R^{\nabla}(X,Y,Z,W) 
-B(Y,Z)B(X,W)+B(X,Z) B(Y,W)
\\
&+\frac{1}{2} \omega(X,Y) (\Lv g(Z,W)+\omega(Z,W))
- \frac{1}{2} \omega(X,Y) \Lv g(Z,W) + \frac{f}{2} \omega(X,Y) \omega(Z,W)
\\
=&R^{\nabla}(X,Y,Z,W) 
-B(Y,Z)B(X,W)+B(X,Z) B(Y,W)
+\frac{f+1}{2} \omega(X,Y)\omega(Z,W). 
\end{align*}
\end{proof}

\begin{lemma}
\label{lemma-curvature-id}
For $X,Y,Z,W\in\Gamma(H)$, the curvature tensor $R^{\nabla}$ of $\nabla$ satisfies the following identites:
\begin{align*}
R^{\nabla}(X,Y,Z,W)-R^{\nabla}(Z,W,X,Y)
&=
-
\frac{1}{2}
\Lv g(X,Z) \omega(Y,W) - \frac{1}{2} \omega(X,Z) \Lv g(Y,W)
\\
&+\frac{1}{2} \Lv g(X,W) \omega(Y,Z)+\frac{1}{2}\omega(X,W) \Lv g(Y,Z),
\\
R^{\nabla}(X,Y)Z + R^{\nabla}(Y,Z)X + R^{\nabla}(Z,X)Y
&=
\omega(X,Y) T_v Z + \omega(Y,Z) T_v X + \omega(Z,X) T_v Y
\end{align*}
\end{lemma}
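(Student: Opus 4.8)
The plan is to deduce both identities from the Gauss-type equations of Lemma \ref{Gauss-equation}, exploiting that the Levi--Civita curvature $R_g$, being torsion-free, satisfies the usual symmetries: the algebraic Bianchi identity $R_g(X,Y)Z+R_g(Y,Z)X+R_g(Z,X)Y=0$ and the pair symmetry $R_g(X,Y,Z,W)=R_g(Z,W,X,Y)$. Throughout, $X,Y,Z,W\in\Gamma(H)$, and I first record that $R^{\nabla}(X,Y)Z$ again lies in $\Gamma(H)$: indeed $\nabla$ preserves $H$, and in $\nabla_{[X,Y]}Z$ one writes $[X,Y]=\pi_H([X,Y])-\omega(X,Y)v$ and uses that $\nabla_v$ also preserves $H$ by construction. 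I will write $\mathfrak{S}$ for the cyclic sum over $X,Y,Z$.

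For the pair-symmetry defect, I would write the second equation of Lemma \ref{Gauss-equation} once as stated and once with the pairs $(X,Y)$ and $(Z,W)$ interchanged, and subtract. On the left the two copies of $R_g$ cancel by pair symmetry; on the right the two copies of $\tfrac{f+1}{2}\,\omega(X,Y)\omega(Z,W)$ cancel as well, leaving
\begin{align*}
R^{\nabla}(X,Y,Z,W)-R^{\nabla}(Z,W,X,Y)
=
\bigl[B(Y,Z)B(X,W)-B(W,X)B(Z,Y)\bigr]
-\bigl[B(X,Z)B(Y,W)-B(Z,X)B(W,Y)\bigr].
\end{align*}
Now substitute $B=-\tfrac12(\mathcal{L}_v g+\omega)$ from Lemma \ref{lemma-connection}(iii), using that $\mathcal{L}_v g$ is symmetric and $\omega$ is skew. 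In each bracket the quadratic terms in $\mathcal{L}_v g$ alone and in $\omega$ alone cancel, and the cross-terms assemble into $\tfrac12\bigl(\mathcal{L}_v g(X,W)\,\omega(Y,Z)+\omega(X,W)\,\mathcal{L}_v g(Y,Z)\bigr)$ from the first bracket and $\tfrac12\bigl(\mathcal{L}_v g(X,Z)\,\omega(Y,W)+\omega(X,Z)\,\mathcal{L}_v g(Y,W)\bigr)$ from the second; subtracting gives precisely the first asserted identity.

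For the Bianchi defect, I would take the cyclic sum $\mathfrak{S}$ of the first equation of Lemma \ref{Gauss-equation}. Its left-hand side becomes $\pi_H\bigl(R_g(X,Y)Z+R_g(Y,Z)X+R_g(Z,X)Y\bigr)=0$. On the right, under the cyclic sum the coefficient of $\nabla^{\mathrm{LC}}_X v$ coming from the $B$-terms $B(Y,Z)\nabla^{\mathrm{LC}}_X v-B(X,Z)\nabla^{\mathrm{LC}}_Y v$ is $B(Y,Z)-B(Z,Y)=-\omega(Y,Z)$ (and cyclically), which exactly cancels the contribution of the $\omega(X,Y)\nabla^{\mathrm{LC}}_Z v$ term; what remains is the identity $R^{\nabla}(X,Y)Z+R^{\nabla}(Y,Z)X+R^{\nabla}(Z,X)Y=\omega(X,Y)T_vZ+\omega(Y,Z)T_vX+\omega(Z,X)T_vY$. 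Alternatively one may quote the general first Bianchi identity for a connection with torsion, $\mathfrak{S}\,R^{\nabla}(X,Y)Z=\mathfrak{S}\bigl(T(T(X,Y),Z)+(\nabla_XT)(Y,Z)\bigr)$, and use $T(X,Y)=\omega(X,Y)v$ from Lemma \ref{lemma-connection}(iv) together with $\nabla v=0$ and $\nabla\omega=0$ (valid for our connection by the parallelism established in Section \ref{torsion}) to kill the $\nabla T$ term, leaving $\mathfrak{S}\,R^{\nabla}(X,Y)Z=\mathfrak{S}\,\omega(X,Y)\,T_vZ$.

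The whole computation is routine; the only point requiring care is bookkeeping of signs and indices when expanding the products $B\cdot B$, since $B$ is neither symmetric nor skew. There is no conceptual obstacle.
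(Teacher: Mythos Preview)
Your proof is correct and essentially follows the paper's approach. For the pair-symmetry defect you and the paper both subtract the two instances of the Gauss equation and expand the $B\cdot B$ terms; the only cosmetic difference is that the paper writes $B=B^s+B^a$ while you substitute $B=-\tfrac12(\mathcal{L}_v g+\omega)$ directly. For the Bianchi defect the paper uses only your second route (the general first Bianchi identity with torsion, together with $\nabla\omega=0$ and $\nabla v=0$); your first route via the cyclic sum of the Gauss equation is a valid and equally short alternative that the paper does not record.
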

\begin{proof}
By the formula from Lemma \ref{Gauss-equation} and the symmetry of the standard Riemannian curvature tensor $R_g$, we have
\begin{align*}
&R^{\nabla}(X,Y,Z,W)-R^{\nabla}(Z,W,X,Y)
\\
=&
B(Y,Z)B(X,W)-B(X,Z)B(Y,W)-B(W,X)B(Z,Y)+B(Z,X)B(W,Y)
.
\end{align*}
Splitting $B=B^s+B^a$ into a symmetric and alternating part, we get
\begin{align*}
&B(Y,Z)B(X,W)-B(Z,Y)B(W,X)
\\
=&
B^s(Y,Z)B^a(X,W)+B^a(Y,Z)B^s(X,W)
\\
-&
B^s(Z,Y)B^a(W,X)-B^a(Z,Y)B^s(W,X)
\\
=&
2 B^s(Y,Z)B^a(X,W)+2B^a(Y,Z)B^s(X,W).
\end{align*}
Interchanging $X$ and $Y$ gives
\begin{align*}
&B(X,Z)B(Y,W)-B(Z,X)B(W,Y)
\\
=&
2 B^s(X,Z)B^a(Y,W)+2B^a(X,Z)B^s(Y,W).
\end{align*}
Thus we have
\begin{align*}
&R^{\nabla}(X,Y,Z,W)-R^{\nabla}(Z,W,X,Y)
\\
=&
2B^s(Y,Z) B^a(X,W)+2B^a(Y,Z) B^s(X,W)
-2B^s(X,Z) B^a(Y,W)-2B^a(X,Z)B^s(Y,W)
\\
=&
-\frac{1}{2} \omega(X,Z) \Lv g(Y,W)
-\frac{1}{2} \omega(Y,W) \Lv g(X,Z)
+ \frac{1}{2} \omega(X,W) \Lv g(Y,Z)
+\frac{1}{2} \omega(Y,Z) \Lv g(X,W).
\end{align*}
This gives the first identity. To explain the second identity, denote by
$\mathfrak{S}_{X,Y,Z}$ the cyclic sum in $X,Y,Z$. We have the general formula \cite[Theorem 1.24]{besse} 
\begin{align*}
\mathfrak{S}_{X,Y,Z} R^{\nabla}(X,Y)Z
=
\mathfrak{S}_{X,Y,Z} 
(
T(T(X,Y),Z)
+
(\nabla_X T)(Y,Z)
).
\end{align*}
We have
\begin{align*}
T(T(X,Y),Z)
=
\omega(X,Y) T_v Z
\end{align*}
and
\begin{align*}
(\nabla_X T)(Y,Z)
&=
\nabla_X(T(Y,Z))
-T(\nabla_X Y,Z)
-
T(Y,\nabla_X Z)
\\
&=
\nabla_X(\omega(Y,Z)v)
-\omega(\nabla_X Y,Z) v - \omega(Y,\nabla_X Z)v
\\
&=
((\nabla_X \omega)(Y,Z))v
=0.
\end{align*}
This gives the second identity.
\end{proof}

Denote by $R^H$ the restriction of $R^{\nabla}$ to $H$. $R^H$ is a section of 
\begin{align*}
\Lambda^2_H \otimes \Lambda^2_H = S^2(\Lambda^2_H) \oplus \Lambda^2(\Lambda^2_H),
\end{align*}
decomposes as
\begin{align*}
R^{H}=\pi_s(R^{H}) + \pi_a (R^{H})
\end{align*}
into a symmetric and alternating part.
To further discuss the structure of $R^H$ we need the \textit{Kulkarni--Nomizu product}.
For $h,k\in H^*\otimes H^*$ define $h\KN k\in \Lambda_H^2\otimes \Lambda_H^2$ by
\begin{align*}
(h\KN k) (X,Y,Z,W)
=
h(X,W) k(Y,Z) + h(Y,Z) k(X,W)
-h(X,Z) k(Y,W) - h(Y,W) k(X,Z).
\end{align*}
The Kulkarni--Nomizu product allows to interpret the first identity in Lemma \ref{lemma-curvature-id} as
\begin{align}
\label{skew-curvature}
\pi_a(R^{H})
=
\frac{1}{4} \omega\KN \Lv g.
\end{align}
For $\mathcal{R}\in\Lambda_H^2\otimes \Lambda_H^2$ define the \textit{Ricci contraction} $c(\mathcal{R})$ to be
\begin{align*}
c(\mathcal{R})(X,Y) = tr(\mathcal{R}(\cdot,X,Y,\cdot))
=
\sum_{i=1}^4 \mathcal{R}(e_i,X,Y,e_i),
\end{align*}
and the \textit{Bianchi map} $b$ by
\begin{align*}
b(\mathcal{R})(X,Y,Z,W)
=
\frac{1}{3}
(
\mathcal{R}(X,Y,Z,W)
+
\mathcal{R}(Y,Z,X,W)
+
\mathcal{R}(Z,X,Y,W)
).
\end{align*}
For the Bianchi map restricted to $S^2(\Lambda^2_H)$ 
we have $\im\, b = \Lambda_H^4$, which gives the decomposition
\begin{align*}
S^2(\Lambda^2_H) = \mathcal{C}H \oplus \Lambda^4_H,
\end{align*}
where $\mathcal{C}H:= \ker b|_{S^2(\Lambda^2_H)}$ is the space of \textit{algebraic curvature tensors} on $H$.
Thus we have the decomposition 
\begin{align*}
\Lambda^2_H \otimes \Lambda^2_H = \mathcal{C}H \oplus \Lambda^4_H \oplus \Lambda^2(\Lambda^2_H),
\end{align*}
and because $b$ is an idempotent, $R^H$ decomposes as
\begin{align}
R^H = C^H + b(\pi_s(R^H)) + \pi_a(R^H),
\label{curvature-decomposition}
\end{align}
where 
\begin{align*}
C^H= \pi_s(R^H)-b(\pi_s(R^H)).
\end{align*}
A key property of the Kulkarni--Nomizu product is that for $h\in S^2 H^*$ we have $b(h\KN g_H)=0$ and that the resulting map $\cdot \KN g_H: S^2 H^* \rightarrow \mathcal{C}H$ is the adjoint of $4c$. Together with the formula 
$c(h\KN g_H) = 2 h + \mathrm{tr}(h) g_H$ this leads to the decomposition
\begin{align*}
C^H = W^H + \frac{1}{2} r_H\KN g_H - \frac{1}{12} s_H\, (g_H \KN g_H),
\end{align*}
where we call the trace-less part $W^H$ the \textit{Weyl tensor} of $R^H$, and write $r_H:= c(C^H)$ and $s_H:=\mathrm{tr}(c(C^H))$, the contact analogues of Ricci and scalar curvature.

\subsection{Computation of Weitzenb\"ock remainder}

\label{section-Weitzenbock}

In this section we derive a Weitzenb\"ock identity for the Laplacian $\Delta_H$
acting on $\Omega^-_H$.
Denote the component of $W^H$ in $\Lambda_H^-\otimes\Lambda_H^-$ by $(W^H)^-$. We will prove 
\begin{proposition}
We have
\begin{align}
2\Box_H
= 
\nabla_H^*\nabla_H
+(W^{H})^-_{klmn}\varepsilon^k \iota^l \varepsilon^m \iota^n + \frac{s_H}{3}  
- \frac{2}{3} f
\label{W-ID}
\end{align}
\end{proposition}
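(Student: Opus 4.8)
The plan is to specialise the general Weitzenb\"ock identity \eqref{general_Bochner_formula} to the connection $\nabla$ fixed at the beginning of Section \ref{section-curvature} --- the one for which $(\omega,\alpha,\beta)$ is parallel and $T_v=\tfrac12\Lv I\circ I-\tfrac12 f I$ --- restrict it to $\Omega^-_H$, and then insert the curvature decomposition \eqref{curvature-decomposition}. First I would observe that on $\Omega^-_H$ the first-order term of \eqref{general_Bochner_formula} disappears: $\nabla_v$ preserves $\Omega^-_H$ (as in the proof of Proposition \ref{Proposition-1/2-estimate-Q}) and $\omega$, being self-dual, acts trivially on $\Lambda^-_H$, so $\sum_{k,l}\omega_{kl}\,\varepsilon^k\iota^l\nabla_v$ drops out. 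Hence on $\Omega^-_H$ we have $\Delta_H=\nabla_H^*\nabla_H+Q(R^H)$, where $Q(R^H):=\sum_{k,l,m,n}\varepsilon^k\iota^l\varepsilon^m\iota^n R^H_{klmn}$ is the curvature term of \eqref{general_Bochner_formula}; since $2\Box_H=\Delta_H$ on $\Omega^-_H$, it suffices to identify $Q(R^H)$ restricted to $\Lambda^-_H$.

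Using the linearity of $R\mapsto Q(R)$ I would write $Q(R^H)=Q(C^H)+Q\big(b(\pi_s R^H)\big)+Q(\pi_a R^H)$ according to \eqref{curvature-decomposition}. The term $Q(C^H)$ is purely algebraic in the honest curvature tensor $C^H\in\mathcal{C}H$, so its restriction to $\Lambda^-_H$ is given by the classical four-dimensional computation: rewriting $C^H=W^H+\tfrac12\mathring r_H\KN g_H+\tfrac1{24}s_H\,(g_H\KN g_H)$ with $\mathring r_H$ the trace-free contact Ricci tensor, the $\mathring r_H\KN g_H$ summand contributes only off-diagonally between $\Lambda^+_H$ and $\Lambda^-_H$, while $W^H$ and $g_H\KN g_H$ produce exactly $(W^H)^-_{klmn}\varepsilon^k\iota^l\varepsilon^m\iota^n$ and $\tfrac{s_H}{3}$, reproducing the Riemannian model \eqref{Weitzenbock-Riemann}. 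It then remains to see that the two non-Riemannian pieces together contribute $-\tfrac23 f$.

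For the skew part $\pi_a R^H=\tfrac14\,\omega\KN\Lv g$ (formula \eqref{skew-curvature}): since $\omega$ acts trivially on $\Lambda^-_H$, since the symmetric endomorphism associated with $\Lv g$ is trace-free (a consequence of $\Lv\omega=0$) and hence exchanges $\Lambda^+_H$ and $\Lambda^-_H$, and since the scalar contractions of $\omega$ against $\Lv g$ vanish, one checks that $Q(\pi_a R^H)$ contributes nothing to the $\Lambda^-_H\to\Lambda^-_H$ block. For the $\Lambda^4_H$-part $b(\pi_s R^H)$: its value is read off from the Bianchi-type identity of Lemma \ref{lemma-curvature-id}, namely $3\,b(R^H)(X,Y,Z,W)=\mathfrak{S}_{X,Y,Z}\,\omega(X,Y)\,g(T_vZ,W)$; substituting $g(T_vZ,W)=\tfrac12\Lv g(Z,W)-\tfrac12 f\,\omega(Z,W)$, the $\Lv g$-part has vanishing $\Lambda^4_H$-component (a symmetric tensor contracted against the fully antisymmetric volume element), so $b(\pi_s R^H)$ is a multiple of $f\,\omega\wedge\omega$. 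Applying $Q$ to this volume-type element gives a scalar on $\Lambda^-_H$ (opposite in sign to its value on $\Lambda^+_H$), and keeping track of the normalisation produces the contribution $-\tfrac23 f$. Summing the three contributions yields \eqref{W-ID}.

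The main obstacle is precisely the computation of how $Q$ acts on the non-standard curvature pieces --- the skew part $\tfrac14\,\omega\KN\Lv g$ and the $\Lambda^4_H$-part $b(\pi_s R^H)$ --- on anti-self-dual forms, together with the bookkeeping of self-dual versus anti-self-dual components needed to show that all $\Lv g$-dependent terms drop out of the $\Lambda^-_H\to\Lambda^-_H$ block and to fix the constant $-\tfrac23$. The explicit torsion $T_v=\tfrac12\Lv I\circ I-\tfrac12 f I$ of the chosen connection and the relations $\Lv\alpha=f\beta$, $\Lv\beta=-f\alpha$ modulo $\Omega^-_H$ that define the invariant $f$ are what make the final constant come out in terms of $f$ alone.
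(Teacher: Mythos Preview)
Your proposal is correct and follows essentially the same approach as the paper: specialise the general Weitzenb\"ock identity \eqref{general_Bochner_formula} to $\Omega^-_H$, drop the first-order term, decompose $R^H$ as in \eqref{curvature-decomposition}, and compute the contribution of each of the three pieces separately. The only noteworthy difference is in how the $\Lv g$–dependence is eliminated from $b(\pi_s R^H)$: the paper computes $b(\pi_a R^H)=\tfrac14 b(\omega\KN\Lv g)$ explicitly via Lemma~\ref{KN-omega} and subtracts it from $b(R^H)$ so that the $\Lv g$–terms cancel, whereas you observe (equivalently) that $b(\pi_s R^H)=\pi_{\Lambda^4_H}(R^H)$ is the total antisymmetrisation, which kills $\omega\otimes\Lv g$ because $\Lv g$ is symmetric; both routes yield $b(\pi_s R^H)=-\tfrac{f}{2}\,b(\omega\otimes\omega)$. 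For the vanishing of the $\pi_a R^H$–contribution the paper again uses Lemma~\ref{KN-omega} together with Lemma~\ref{general-Bochner-contribution}, so your sketch there is the part that most needs to be filled in with a concrete computation.
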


We first derive a general formula for the curvature term in equation \eqref{general_Bochner_formula}.

\begin{lemma}
\label{general-Bochner-contribution}
$\mathcal{R}\in \Lambda^2_H\otimes\Lambda^2_H$ acts on $2$-forms as 
\begin{align*}
\mathcal{R}_{klmn} \varepsilon^k \iota^l \varepsilon^m \iota^n e^{ab}
=
c(\mathcal{R})_{ka} e^{kb}+ c(\mathcal{R})_{kb} e^{ak}
- \mathcal{R}_{bakm} e^{km}+ 3 b(\mathcal{R})_{bakm} e^{km}.
\end{align*}
\end{lemma}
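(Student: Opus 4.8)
The plan is to prove Lemma~\ref{general-Bochner-contribution} by a direct computation of the action of the operator $\mathcal{R}_{klmn}\varepsilon^k\iota^l\varepsilon^m\iota^n$ on an arbitrary decomposable $2$-form $e^{ab}=e^a\wedge e^b$, working in a fixed orthonormal coframe $e^1,\dots,e^4$ of $H$ at a point, with $\varepsilon^k=e^k\wedge\cdot$ and $\iota^l=e_l\lrcorner\cdot$. The only input is the algebraic fact that $\varepsilon$ and $\iota$ satisfy the Clifford/exterior relations $\iota^l\varepsilon^m+\varepsilon^m\iota^l=\delta^{lm}$, $\varepsilon^k\varepsilon^m=-\varepsilon^m\varepsilon^k$, $\iota^l\iota^n=-\iota^n\iota^l$, together with the tensor symmetries of $\mathcal{R}$ (antisymmetry in $kl$ and in $mn$) which let us freely relabel summation indices.

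First I would compute $\iota^n e^{ab}=\delta^{nb}e^a-\delta^{na}e^b$, then apply $\varepsilon^m$ to get $\delta^{nb}e^{ma}-\delta^{na}e^{mb}$ plus (after contracting with $\mathcal{R}$) terms; then apply $\iota^l$ and $\varepsilon^k$ in turn, each time expanding using the contraction rules. Contracting the resulting sum against $\mathcal{R}_{klmn}$ and collecting terms produces four types of contributions: two single-contraction terms where one pair of indices of $\mathcal{R}$ gets traced (these assemble into the Ricci-contraction terms $c(\mathcal{R})_{ka}e^{kb}$ and $c(\mathcal{R})_{kb}e^{ak}$, using $c(\mathcal{R})_{ij}=\sum_k\mathcal{R}_{kijk}$ up to the sign convention fixed in Section~\ref{section-curvature}), and two double-contraction terms where both free indices $a,b$ are ``hit'', producing a term proportional to $\mathcal{R}_{bakm}e^{km}$. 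The remaining step is to recognise that the genuinely $4$-index piece that survives is precisely the combination appearing on the right-hand side: one writes the raw coefficient of $e^{km}$ as a sum over the cyclic permutations of three of $\mathcal{R}$'s indices, and uses the definition $b(\mathcal{R})_{bakm}=\frac13(\mathcal{R}_{bakm}+\mathcal{R}_{akbm}+\mathcal{R}_{kabm})$ to rewrite $-\mathcal{R}_{bakm}+(\text{cyclic terms})$ as $-\mathcal{R}_{bakm}+3b(\mathcal{R})_{bakm}$.

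The main obstacle is purely bookkeeping: keeping the signs straight through the four successive applications of $\varepsilon$ and $\iota$, and correctly matching the index conventions for $c(\mathcal{R})$ and $b(\mathcal{R})$ fixed earlier (in particular the sign in $c(\mathcal{R})(X,Y)=\sum_i\mathcal{R}(e_i,X,Y,e_i)$ versus how the trace appears after the Clifford contractions). I would organise the computation so that after each step the intermediate expression is displayed with all four index pairs explicit, then relabel dummy indices to exploit the $kl\leftrightarrow$ antisymmetry and $mn\leftrightarrow$ antisymmetry of $\mathcal{R}$ before collecting. A useful sanity check is to test the formula on $\mathcal{R}=g_H\KN g_H$ (where both sides are easy to evaluate independently, using $c(g_H\KN g_H)=2g_H+\mathrm{tr}(g_H)g_H=6g_H$ in dimension $4$ and $b(g_H\KN g_H)=0$) and on $\mathcal{R}$ a simple antisymmetric element like $\omega\otimes\omega-$ its transpose, to confirm the coefficients $1$, $-1$, $3$.

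Once Lemma~\ref{general-Bochner-contribution} is established, the proof of Proposition (equation~\eqref{W-ID}) follows by specialising: insert $\mathcal{R}=R^H$ (the restriction to $H$ of the curvature of the chosen connection) into the Weitzenb\"ock formula~\eqref{general_Bochner_formula} — noting, as in the proof of Proposition~\ref{Proposition-1/2-estimate-Q}, that the $\omega$-term drops on $\Lambda^-_H$ — and use the decomposition~\eqref{curvature-decomposition}, $R^H=C^H+b(\pi_s(R^H))+\pi_a(R^H)$ with $C^H=W^H+\tfrac12 r_H\KN g_H-\tfrac{1}{12}s_H\,(g_H\KN g_H)$. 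The Bianchi-map piece $b(\pi_s R^H)$ and the antisymmetric piece $\pi_a(R^H)=\tfrac14\omega\KN\Lv g$ are handled by Lemma~\ref{general-Bochner-contribution}: the $3b(\mathcal{R})$ correction term cancels the $b(\pi_s R^H)$ contribution, and the Kulkarni--Nomizu terms with a factor of $g_H$ contribute only through their Ricci contractions, which by $c(h\KN g_H)=2h+\mathrm{tr}(h)g_H$ reduce to multiples of the identity on $\Lambda^-_H$; evaluating these scalars (using $\mathrm{tr}(r_H)=s_H$, that the $W^H$ part acts through its $\Lambda^-_H\otimes\Lambda^-_H$ component $(W^H)^-$ since the self-dual and mixed parts annihilate anti-self-dual forms, and that $\pi_a(R^H)$ contributes the $-\tfrac23 f$ term via $\Lv g\sim \Lv I\circ I$ together with the normalisation $\mathcal{L}_v\alpha=f\beta\bmod\Omega^-_H$) yields exactly $\tfrac{s_H}{3}-\tfrac23 f$. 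The only delicate point here is the trace computation of the $\pi_a(R^H)=\tfrac14\omega\KN\Lv g$ term on $\Lambda^-_H$, where one must use that $\Lv g$ has no component proportional to $g_H$ that matters (its trace part would be killed) and extract the coefficient from the relation between $\Lv\alpha,\Lv\beta$ and $f$; I would verify the numerical factor $-\tfrac23$ by pairing~\eqref{W-ID} against $\alpha$ itself and comparing with $\Box_H$ computed directly from $d_H\alpha=0$ and $\mathcal{L}_v\alpha=f\beta\bmod\Omega^-_H$.
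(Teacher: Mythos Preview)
Your approach to the Lemma itself is essentially identical to the paper's: a direct step-by-step expansion of $\varepsilon^k\iota^l\varepsilon^m\iota^n e^{ab}$, followed by relabelling of dummy indices using the antisymmetries of $\mathcal{R}$ and identification of the Ricci-contraction and Bianchi-map pieces. The paper carries this out in four displayed lines and your outline would reproduce exactly that.

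However, your subsequent sketch of how the Lemma feeds into the Weitzenb\"ock identity~\eqref{W-ID} misattributes the contributions. In the paper the antisymmetric piece $\pi_a(R^H)=\tfrac14\,\omega\KN\Lv g$ contributes \emph{zero} on $\Lambda^-_H$: its Ricci contraction is a multiple of $\omega$, which acts trivially on $\Lambda^-_H$, and the remaining $-\mathcal{R}_{bakm}e^{km}+3b(\mathcal{R})_{bakm}e^{km}$ terms cancel by the identity of Lemma~\ref{KN-omega}. The $-\tfrac23 f$ term comes instead from $b(\pi_s(R^H))=-\tfrac{f}{2}\,b(\omega\otimes\omega)$, via the computation of $(I\wedge I)$ acting as the identity on $\Lambda^-_H$. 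So your claim that ``the $3b(\mathcal{R})$ correction term cancels the $b(\pi_s R^H)$ contribution'' and that ``$\pi_a(R^H)$ contributes the $-\tfrac23 f$ term'' has the roles of these two pieces swapped; the overall answer is right but the bookkeeping of which summand produces which scalar is not.
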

\begin{proof}
We compute 
\begin{align*}
&\mathcal{R}_{klmn} \varepsilon^k \iota^l \varepsilon^m \iota^n e^{ab}
\\
=&
\mathcal{R}_{klma} \varepsilon^k \iota^l e^{mb} + \mathcal{R}_{klmb} \varepsilon^k \iota^l e^{am}
\\
=&
\mathcal{R}_{klla} e^{kb} - \mathcal{R}_{kbma} e^{km} + \mathcal{R}_{kamb} e^{km} - \mathcal{R}_{kllb} e^{ka}
\\
=&
c(\mathcal{R})_{ka} e^{kb} - c(\mathcal{R})_{kb} e^{ka} + (\mathcal{R}_{kbam}+\mathcal{R}_{akbm})e^{km}
\\
=&
c(\mathcal{R})_{ka} e^{kb}+ c(\mathcal{R})_{kb} e^{ak}
- \mathcal{R}_{bakm} e^{km}+ 3 b(\mathcal{R})_{bakm} e^{km}.
\end{align*}
\end{proof}

The Kulkarni--Nomizu product with $\omega$ behaves differently than that with $g_H$. 

\begin{lemma}
\label{KN-omega}
Let $h\in S^2 H^*$. Then we have
\begin{align*}
b(\omega \KN h) (X,Y,Z,W)
=
2b(\omega\otimes h)(X,Y,Z,W)
=
\frac{2}{3}
\mathfrak{S}_{X,Y,Z}(\omega(X,Y) h(Z,W)),
\end{align*}
and
\begin{align*}
3 b(\omega \KN h)_{bakm} e^{km} = (\omega\KN h)_{bakm} e^{km}.
\end{align*}
\end{lemma}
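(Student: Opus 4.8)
Both identities will follow by a direct expansion from the definitions of the Kulkarni--Nomizu product $\KN$, of the Bianchi map $b$, and of the cyclic-sum operator $\mathfrak{S}_{X,Y,Z}$; the only algebraic inputs are that $h$ is symmetric and $\omega$ skew.

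\textbf{First identity.} The plan is to write out
$$(\omega\KN h)(X,Y,Z,W)=\omega(X,W)h(Y,Z)+\omega(Y,Z)h(X,W)-\omega(X,Z)h(Y,W)-\omega(Y,W)h(X,Z),$$
together with its two cyclic rotates $(\omega\KN h)(Y,Z,X,W)$ and $(\omega\KN h)(Z,X,Y,W)$, and to add the three. Bringing each of the resulting twelve monomials $\omega(\cdot,\cdot)h(\cdot,\cdot)$ to a canonical form by means of $h(U,V)=h(V,U)$ and $\omega(U,V)=-\omega(V,U)$, one checks that only $\omega(X,Y)h(Z,W)$, $\omega(Y,Z)h(X,W)$, $\omega(Z,X)h(Y,W)$ survive, each with total coefficient $2$, all other monomials cancelling. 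Hence the sum of the three rotates equals $2\,\mathfrak{S}_{X,Y,Z}\big(\omega(X,Y)h(Z,W)\big)$, and since $b$ carries the prefactor $\tfrac13$ this gives $b(\omega\KN h)(X,Y,Z,W)=\tfrac23\,\mathfrak{S}_{X,Y,Z}\big(\omega(X,Y)h(Z,W)\big)$. As the $(0,4)$-tensor $\omega\otimes h$ is $(X,Y,Z,W)\mapsto\omega(X,Y)h(Z,W)$, its Bianchi symmetrisation is $b(\omega\otimes h)(X,Y,Z,W)=\tfrac13\,\mathfrak{S}_{X,Y,Z}\big(\omega(X,Y)h(Z,W)\big)$, so the three quantities in the statement agree.

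\textbf{Second identity.} Using $3b(\mathcal R)_{bakm}=\mathcal R_{bakm}+\mathcal R_{akbm}+\mathcal R_{kbam}$ (the cyclic sum over the first three slots), it is enough to show that $\big[(\omega\KN h)_{akbm}+(\omega\KN h)_{kbam}\big]\,e^{km}=0$. I would expand both Kulkarni--Nomizu components in indices; after applying the symmetries, the two terms proportional to $\omega_{km}$ cancel, the two terms proportional to $h_{km}$ combine to $-2\omega_{ab}h_{km}$ which is annihilated by $h_{km}e^{km}=0$, and what remains is $(\omega_{am}h_{kb}+\omega_{kb}h_{am}-\omega_{ka}h_{bm}-\omega_{bm}h_{ka})\,e^{km}$. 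Contracting these four terms against $e^{km}=e^k\wedge e^m$ and setting $\lambda_c:=\sum_k h_{kc}\,e^k$, $\nu_c:=\sum_m\omega_{cm}\,e^m$, with the sign flips on the middle two coming from $\omega_{kb}=-\omega_{bk}$ and $\omega_{ka}=-\omega_{ak}$, the expression becomes $\lambda_b\wedge\nu_a+\lambda_a\wedge\nu_b-\lambda_b\wedge\nu_a-\lambda_a\wedge\nu_b=0$, which establishes $3b(\omega\KN h)_{bakm}\,e^{km}=(\omega\KN h)_{bakm}\,e^{km}$.

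The whole computation is mechanical; the one place demanding care will be the second identity, where one must track precisely which index sits in which slot and the signs incurred when a contracted component expression is rewritten as a wedge product, so that the two pairwise cancellations become manifest.
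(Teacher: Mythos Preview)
Your proof is correct and, for the first identity, proceeds exactly as the paper does: expand the three cyclic rotates of the Kulkarni--Nomizu product, use the symmetry of $h$ and skewness of $\omega$ to cancel terms pairwise, and read off the result.

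For the second identity your route differs slightly from the paper's. The paper computes $(\omega\KN h)_{bakm}e^{km}$ and $3b(\omega\otimes h)_{bakm}e^{km}$ separately, shows the latter equals half the former, and then invokes the first identity $b(\omega\KN h)=2b(\omega\otimes h)$ to conclude. You instead work directly with the cyclic decomposition $3b(\mathcal R)_{bakm}=\mathcal R_{bakm}+\mathcal R_{akbm}+\mathcal R_{kbam}$ and show that the two ``extra'' terms annihilate each other after contraction with $e^{km}$; your repackaging of the surviving four monomials as wedge products $\lambda_c\wedge\nu_d$ makes the final cancellation transparent. This is a self-contained argument for the second identity that does not appeal to the first, whereas the paper's organisation ties the two together via $b(\omega\otimes h)$. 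Both are straightforward index computations of the same length; neither has a real advantage over the other.
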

\begin{proof}
To prove the first formula we calculate
\begin{align*}
&3 b(\omega \KN h)(X,Y,Z,W)
\\
=&
(\omega\KN h)(X,Y,Z,W) + (\omega\KN h)(Y,Z,X,W) +(\omega\KN h)(Z,X,Y,W)
\\
=&
\omega(X,W) h(Y,Z) + \omega(Y,Z) h(X,W)
-\omega(X,Z) h(Y,W) - \omega(Y,W) h(X,Z)  
\\
&+\omega(Y,W) h(Z,X) + \omega(Z,X) h(Y,W)
-\omega(Y,X) h(Z,W) - \omega(Z,W) h(Y,X) 
\\
&+ \omega(Z,W)h(X,Y) + \omega(X,Y) h(Z,W)
- \omega(Z,Y) h(X,W) - \omega(X,W) h(Z,Y) 
\\
=&
2
\{
\omega(X,Y) h(Z,W) + \omega(Y,Z) h(X,W) + \omega(Z,X) h(Y,W)
\}.
\end{align*}
For the second formula we have
\begin{align*}
(\omega \KN h)_{bakm} e^{km}
=
(\omega_{bm} h_{ak}+\omega_{ak} h_{bm}-\omega_{bk} h_{am}-\omega_{am} h_{bk} ) e^{km}
\\
=
2(\omega_{bm} h_{ak}+\omega_{ak} h_{bm}) e^{km}
\end{align*}
and
\begin{align*}
3b(\omega \otimes h)_{bakm} e^{km}
=
(\omega_{ba} h_{km} + \omega_{ak} h_{bm} + \omega_{kb} h_{am}) e^{km}
\\
=
(\omega_{ak} h_{bm} + \omega_{kb} h_{am}) e^{km}
\\
=
(\omega_{ak} h_{bm} + \omega_{bm} h_{ak}) e^{km}.
\end{align*}
Thus proving
\begin{align*}
3 b(\omega\otimes h)_{bakm}e^{km} = \frac{1}{2} (\omega\KN h)_{bakm}e^{km}.
\end{align*}
The statement follows from the first formula.
\end{proof}

We will now compute the contribution to the Weitzenb\"ock identity of each component in the decomposition \eqref{curvature-decomposition}.

\subsubsection*{Contribution of $C^H$}

Here the algebra is the same as on a $4$-dimensional Riemannian manifold. The action of $W^H$ preserves the decomposition $\Lambda_H^2=\Lambda_H^+\oplus \Lambda_H^-$. Denote the component of $W^H$ in $\Lambda_H^-\otimes \Lambda_H^-$ by $(W^H)^-$. Then the total contribution of $C^H$ is
\begin{align}
(W^H)^{-}_{klmn} \varepsilon^k\iota^l\varepsilon^m\iota^n + \frac{s_H}{3}.
\label{contribution-C}
\end{align}

\subsubsection*{Contribution of $b(\pi_s(R^H))$}

Because
\begin{align*}
T_v = \frac{1}{2} \Lv I \circ I -\frac{f}{2} I
\end{align*}
and $\Lv g \sim \Lv I \circ I$, by Lemma \ref{lemma-curvature-id} we have
\begin{align*}
b(R_H)(X,Y,Z,W) = \frac{1}{6} \mathfrak{S}_{X,Y,Z}(\omega(X,Y) \Lv g(Z,W))
-
\frac{f}{6} \mathfrak{S}_{X,Y,Z}(\omega(X,Y) \omega(Z,W)),
\end{align*}
or 
\begin{align*}
b(R_H)
=
\frac{1}{2} b(\omega \otimes \Lv g) -
\frac{f}{2} b(\omega \otimes \omega).
\end{align*}
Therefore by \eqref{skew-curvature} and Lemma \ref{KN-omega},
\begin{align*}
b(\pi_s(R_H))
&=
b(R_H)-b(\pi_a(R_H))
=
\frac{1}{2} b(\omega \otimes \Lv g) -
\frac{f}{2} b(\omega \otimes \omega)
-
\frac{1}{4} b(\omega\KN \Lv g)
\\
&=
-\frac{f}{2} b(\omega \otimes \omega).
\end{align*}
Because $b$ is an idempotem and the contraction is zero on $b(S^2 \Lambda^2) = \Lambda^4$, by Lemma \ref{general-Bochner-contribution} $b(\pi_s(R_H))$ contributes 
\begin{align*}
2 b(\pi_s(R_H))_{bakm} e^{km}
\end{align*}
to the Weitzenb\"ock formula.
\\
We have
\begin{align*}
3b(\omega\otimes\omega)_{bakm}e^{km}
=
\omega_{ba} \omega_{km} e^{km}
+ \omega_{ak} \omega_{bm} e^{km}
+ \omega_{kb} \omega_{am} e^{km}
\\
=
-g_H(e^{ab},\omega) \omega + (I e^a)\wedge(I e^b) -(I e^b)\wedge(I e^a)
=
-g_H(e^{ab},\omega) \omega+ 2 (I\wedge I) e^{ab}.
\end{align*}

\begin{lemma}
$I\wedge I$ acts as the identity on $\Lambda^2_-$.
\end{lemma}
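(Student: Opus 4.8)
The plan is to recognize this as the familiar fact that on a Hermitian $4$-dimensional vector space the operator $\Lambda^2 I$ acts as $+1$ on the $(1,1)$-part of $\Lambda^2$ and as $-1$ on the $(2,0)\oplus(0,2)$-part, and then to combine it with the identification \eqref{11vs-}.

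Concretely, I would argue as follows. The complex structure $I$ induces on $H^*\otimes\mathbb{C}$ a semisimple operator with the two eigenvalues $\pm i$, whose eigenspaces are $(H^*)^{1,0}$ and $(H^*)^{0,1}$. Consequently its second exterior power $I\wedge I$ acts on $\Lambda^{1,1}_H=(H^*)^{1,0}\wedge(H^*)^{0,1}$ as the scalar $i\cdot(-i)=1$, i.e.\ as the identity. By \eqref{11vs-} we have $\Lambda^-_H\otimes\mathbb{C}\subset\Lambda^{1,1}_H$, so $I\wedge I$ is the identity on $\Lambda^-_H\otimes\mathbb{C}$; since $I\wedge I$ is a real operator and $\Lambda^-_H$ is a real subspace, it is the identity on $\Lambda^-_H$ itself. (For completeness, the same argument shows $I\wedge I$ fixes $\omega$ and is $-1$ on $\mathrm{span}\{\alpha,\beta\}$, consistent with $\Lambda^{2,0}_H=\mathbb{C}(\alpha+i\beta)$.)

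For readers who prefer an explicit verification, the statement also falls out of a frame computation: choosing an orthonormal coframe $e^1,\dots,e^4$ of $H$ with $Ie^1=e^2$ and $Ie^3=e^4$ (so that $\omega=e^{12}+e^{34}$ and $\Lambda^-_H=\mathrm{span}\{e^{12}-e^{34},\,e^{13}+e^{24},\,e^{14}-e^{23}\}$), one checks directly that the map $e^a\wedge e^b\mapsto (Ie^a)\wedge (Ie^b)$ fixes each of these three basis $2$-forms.

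There is essentially no obstacle here. The only point worth a moment's care is the normalization of the action of $I$ on forms used in the surrounding computation (where $Ie^a$ denotes the metric dual of $Ie_a$, as in $\omega_{ak}e^k=Ie^a$); but any such convention differs from the one above only by an overall sign on $(H^*)^{1,0}$, and this sign cancels in the quadratic expression $I\wedge I$, so the conclusion is independent of the choice.
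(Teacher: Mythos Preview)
Your proposal is correct. Your primary argument via the $(p,q)$-decomposition and \eqref{11vs-} is more conceptual than the paper's proof, which simply does the explicit frame computation you sketch in your second paragraph: the paper writes down a basis $\omega_1^-,\omega_2^-,\omega_3^-$ of $\Lambda^-_H$ in an adapted coframe and checks $(I\wedge I)\omega_i^-=\omega_i^-$ for $i=1,2,3$. Your eigenvalue argument has the advantage of explaining \emph{why} the identity holds (and simultaneously yields the action on $\Lambda^+_H$), while the paper's computation is self-contained and avoids invoking the complex-linear decomposition; since you include both, nothing is lost either way.
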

\begin{proof}
\begin{align*}
(I\wedge I) \omega^-_1 = (I\wedge I) (e^0 \wedge e^1-e^2\wedge e^3)
=
e^1\wedge(-e^0)-e^3\wedge(-e^2)=e^0\wedge e^1-e^2\wedge e^3 = \omega_1^-,
\\
(I\wedge I) \omega_2^-
=
(I\wedge I) (e^0\wedge e^2-e^3\wedge e^1)
=
e^1\wedge e^3 - (-e^2)\wedge(-e^0)
=
 e^0\wedge e^2-e^3\wedge e^1=\omega_2^-,
\\
(I\wedge I)\omega_3^-
=
(I \wedge I)(e^0\wedge e^3 - e^1\wedge e^2)
=
e^1\wedge(-e^2) - (-e^0)\wedge e^3
=
 e^0\wedge e^3 - e^1\wedge e^2
 =
 \omega_3^-.
\end{align*}
\end{proof}
Therefore, $b(\pi_s(R_H))$ contributes
\begin{align}
- \frac{2}{3} f
\label{contribution-b}
\end{align}
to the Weitzenb\"ock formula.

\subsubsection*{Contribution of $\pi_a(R_H)$}

\begin{lemma}
We have
\begin{align*}
c(\omega \KN \Lv g) = \mathrm{tr}(\Lv g) \omega.
\end{align*}
\end{lemma}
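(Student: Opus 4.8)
The plan is to evaluate the Ricci contraction directly in a local orthonormal frame $e_1,\dots,e_4$ for $H$, using the defining formulas $c(\mathcal R)(X,Y)=\sum_i\mathcal R(e_i,X,Y,e_i)$ and the Kulkarni--Nomizu formula. Expanding $(\omega\KN\Lv g)(e_i,X,Y,e_i)$ and summing over $i$ produces four terms:
\[
c(\omega\KN\Lv g)(X,Y)=\Big(\sum_i\omega(e_i,e_i)\Big)\Lv g(X,Y)+\mathrm{tr}(\Lv g)\,\omega(X,Y)-\sum_i\omega(e_i,Y)\Lv g(X,e_i)-\sum_i\omega(X,e_i)\Lv g(e_i,Y).
\]
The first term vanishes since $\omega$ is skew, and the second is already the desired right-hand side, so everything comes down to showing the two remaining ``cross'' sums cancel.

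To do this I would pass to endomorphisms: write $\omega(X,Y)=g(IX,Y)$ and $\Lv g(X,Y)=g(SX,Y)$, where $S$ is the $g$-symmetric endomorphism of $H$ corresponding to $\Lv g$. Expanding each $e_i$-sum by completeness of the frame gives $\sum_i\omega(e_i,Y)\Lv g(X,e_i)=g(ISX,Y)$ and $\sum_i\omega(X,e_i)\Lv g(e_i,Y)=g(SIX,Y)$, so the two cross terms combine to $-g\big((IS+SI)X,Y\big)$. Thus the lemma reduces to the algebraic identity $IS+SI=0$, i.e. that $I$ and $\Lv g$ anticommute as endomorphisms of $H$.

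That identity is exactly what \eqref{Lie-v-g} provides: it identifies $\Lv g$ with the endomorphism $\Lv I\circ I$, while differentiating $I^2=-\mathbbm{1}$ along $v$ gives $\Lv I\circ I+I\circ\Lv I=0$. Hence $I\circ(\Lv I\circ I)=-I\circ I\circ\Lv I=\Lv I$ and $(\Lv I\circ I)\circ I=-\Lv I$, so $S=\Lv I\circ I$ anticommutes with $I$ and the cross sums drop out, leaving $c(\omega\KN\Lv g)=\mathrm{tr}(\Lv g)\,\omega$. I do not anticipate any genuine difficulty here; the only care needed is bookkeeping of the four slots in the contraction and the sign in the dictionary $\omega\leftrightarrow I$, $\Lv g\leftrightarrow S$.
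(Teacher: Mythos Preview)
Your proof is correct and follows essentially the same route as the paper: expand the Ricci contraction of $\omega\KN h$ in a frame, isolate the trace term, and kill the two cross terms using \eqref{Lie-v-g}. The paper phrases the last step as $h(X,IY)=h(IX,Y)$ for $h=\Lv g$, which is exactly your anticommutation $IS+SI=0$ rewritten via $g$.
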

\begin{proof}
We have
\begin{align*}
\sum_i \omega(X,e_i) e_i
=
\sum_i
g(IX,e_i) e_i 
=
IX
\end{align*}
and thus for $h\in S^2$
\begin{gather*}
c(\omega\KN h)(X,Y)
=
\sum_i
(\omega\KN h) (e_i,X,Y,e_i)
\\
=
\sum_i
\omega(e_i,e_i) h(X,Y) + \omega(X,Y) h(e_i,e_i)
-\omega(e_i,Y) h(X,e_i) - \omega(X,e_i) h(e_i,Y)
\\
=
\mathrm{tr}(h) \omega(X,Y)+h(X,IY)-h(IX,Y) 
.
\end{gather*}
By \eqref{Lie-v-g} for $h=\Lv g$ we have $h(X,IY)-h(IX,Y)=0$.
\end{proof}
Thus the contribution of the Ricci term to the Weitzenb\"ock formula is
\begin{align*}
c(\pi_a(R_H))_{ka} e^{kb}+ c(\pi_a(R_H))_{kb} e^{ak}
=
\frac{1}{4}
\mathrm{tr}(\Lv g)
(\omega_{ka} e^{kb} + \omega_{kb} e^{ak})
=
\frac{1}{4}
\mathrm{tr}(\Lv g)
I(e^{ab}).
\end{align*}
But $I$ acts trivially on $\Lambda^-_H$. 
Thus the contribution is
\begin{align}
-\frac{1}{4} (\omega\KN \Lv g)_{bakm}  e^{km}
+\frac{3}{4} b(\omega\KN \Lv g)_{bakm} e^{km}
=0
\label{contribution-a}
\end{align}
by Lemma \ref{KN-omega}(ii). 
Therefore, the total contribution of $\pi_a(R_H)$ is zero.

Adding the contributions \eqref{contribution-C}, \eqref{contribution-b} and \eqref{contribution-a} gives the identity \eqref{W-ID}.

\bibliographystyle{amsplain}

\bibliography{set-up}

\end{document}